\documentclass[preprint,11pt]{elsarticle}

\usepackage{lmodern}
\usepackage{amsmath,amsthm,amssymb,enumerate}
\usepackage{color}
\usepackage{datetime}
\usepackage{fancyhdr}
\usepackage{graphicx}
\usepackage{array}
\usepackage{float}
\usepackage{eurosym}
\usepackage{pdfpages}
\usepackage{subcaption}
\usepackage[margin=1.0in]{geometry}
\usepackage[hidelinks]{hyperref}
\usepackage{lineno}

\newtheorem{theorem}{Theorem}[section]
\newtheorem{lemma}[theorem]{Lemma}
\newtheorem{proposition}[theorem]{Proposition}
\newtheorem{corollary}[theorem]{Corollary}
\newtheorem{definition}{Definition}[section]
\newtheorem{remark}{Remark}
\newtheorem{assumption}{Assumption}[section]

\journal{}

\begin{document}

\begin{frontmatter}

\title{Well-posedness and a sign-graph limit for an active
Cahn--Hilliard equation on Riemannian manifolds}

\author[1]{Darko Mitrovi\'c\corref{cor1}}
\ead{darkom@ucg.ac.me}

\author[2,3]{Andrej Novak}
\ead{andrej.novak@phy.hr}

\author[1]{Ajla \v{S}ukurica}
\ead{ajla.s@ucg.ac.me}

\cortext[cor1]{Corresponding author}

\address[1]{Faculty of Mathematics and Natural Sciences,
University of Montenegro, Cetinjski put bb,
81000 Podgorica, Montenegro}

\address[2]{Department of Physics, Faculty of Science,
University of Zagreb, Bijeni\v{c}ka cesta 32,
10000 Zagreb, Croatia}

\address[3]{Luxembourg School of Business,
46 C\^ote d'Eich, 1450 Luxembourg}

% Repaired version: q-power anchoring with q>6; beta_q(r)=|r|^{q-2}r.

\begin{abstract}
We establish a weak-solution theory and a singular constitutive limit for the
non-variational Cahn--Hilliard evolution
\[
\partial_t u
=
\Delta_g\!\left(
W'(u)-\mu\Delta_g u
-\nu a(\Delta_g u)|\nabla_g u|_g
\right)
+\lambda(x)\bigl(b(u_{\mathrm{ref}})-b(u)\bigr)
\]
on a compact Riemannian manifold, possibly with boundary. Here
\(W'(u)=u^3-u\), \(a\) is a bounded \(C^1\), non-decreasing classifier
with \(a(0)=0\), and \(b\) is a monotone coercive anchoring law with
\(q\)-growth, \(q>6\). The factor \(|\nabla_g u|_g\) concentrates the active response in diffuse transition layers, while \(a(\Delta_g u)\) makes it sensitive to the sign of the intrinsic second-order profile. The resulting chemical-potential correction is non-variational and therefore breaks the passive Cahn–Hilliard gradient-flow structure.
The natural estimates, however, give
only weak compactness of \(\Delta_g u\), which is insufficient to identify
\(a(\Delta_g u)|\nabla_g u|_g\).

For \(L^2\) initial data and \(L^q\) reference data, we construct weak
solutions in arbitrary dimension. A Galerkin-level one-sided comparison,
using the scalar monotonicity of \(a\) and \(b\) together with biharmonic
coercivity, yields
\[
\Delta_g u_m\longrightarrow\Delta_g u
\quad\text{strongly in }L^2(Q_T),
\]
and hence \(u_m\to u\) in \(L^2(0,T;H_D^2(M))\), together with strong
convergence of the active product in $L^2(Q_T)$. This identifies the constitutive law
in an ordinary weak formulation, without a second-derivative defect. In two
dimensions, weak solutions are unique, and continuous dependence holds in the
$L^2$ metric of the initial data and of \(b(u_{\mathrm{ref}})\), whenever
the latter distance is finite.

The estimates use only the bound and monotonicity of the classifier, not its
slope. They are therefore uniform for
\(a_N(r)=\frac{2}{\pi}\arctan(Nr)\), although $a_N'(0)\to\infty$. In any
dimension, every sequence of corresponding solutions admits a subsequence such
that $u_N\to u$ strongly in \(L^2(0,T;H_D^2(M))\) and
\[
 a_N(\Delta_g u_N)\stackrel{*}{\rightharpoonup}\xi
 \quad\text{in }L^\infty(Q_T),
 \qquad
\xi\in\operatorname{Sign}(\Delta_g u),
\]
and
\[
a_N(\Delta_g u_N)|\nabla_g u_N|_g
\rightharpoonup \xi|\nabla_g u|_g
\quad\text{in }L^2(Q_T).
\]
The limit is governed by the maximal-monotone sign graph. It is a
classifier-steepness limit at fixed interfacial scale, not a sharp-interface
limit. In two dimensions the limiting state is unique, although \(\xi\) may
be non-unique on \(\{\Delta_g u=0\}\). Consequently, the entire family \(u_N\) converges. Flat-torus computations solve the discrete graph inclusion directly and, under coupled refinement, identify a spinodal regime in which activity increases the characteristic length and reduces interfacial content
while worsening phase purity, consistently with local fourth-order damping.
\end{abstract}

\begin{keyword}
Cahn--Hilliard equation \sep Riemannian manifolds \sep active phase-field models
 \sep weak solutions  \sep differential inclusion \sep maxima monotone graph \sep singular constitutive limits 

\texttt{AMS subject classifications:}
35K35, 35K55, 35K65, 35B40, 58J35, 82C26.
\end{keyword}
\end{frontmatter}
%\linenumbers

\section{Introduction}
\label{sec:introduction}

\subsection{From passive Cahn--Hilliard dynamics to an active constitutive law}

The Cahn--Hilliard equation is one of the basic continuum models for phase
separation, spinodal decomposition, coarsening, and diffuse-interface motion.
For an order parameter $u$ on a Riemannian manifold $(M,g)$, the passive
chemical potential associated with the Ginzburg--Landau energy is
\[
\mathcal E_g(u)
=
\int_M
\left(
W(u)+\frac{\mu}{2}|\nabla_g u|_g^2
\right)\,dV_g,
\qquad
m_{\rm pas}[u]=W'(u)-\mu\Delta_g u.
\]
Combining the conservation law
\[
\partial_tu+\operatorname{div}_gJ=0,
\]
with the constitutive relation $J=-\nabla_gm_{\rm pas}$ gives
\[
\partial_tu=\Delta_g\bigl(W'(u)-\mu\Delta_gu\bigr).
\]
On a closed manifold, or under the corresponding no-flux conditions, this is
the $H^{-1}$-gradient flow of $\mathcal E_g$ and satisfies
\[
\frac{d}{dt}\mathcal E_g(u(t))
=
-\int_M|\nabla_gm_{\rm pas}[u]|_g^2\,dV_g.
\]
Thus the passive dynamics combines a conserved order parameter, fourth-order
regularisation, and free-energy dissipation. We refer to the original work of
Cahn and Hilliard \cite{Cahn}, the analysis of Elliott and Zheng
\cite{ElliottZheng86}, and the surveys and monographs
\cite{Mir_book,MirAIMS,Novi08}. Its relation to Model-B dynamics and critical
phenomena is classical \cite{HohenbergHalperin77}, while a derivation based on
microforce balance was given in \cite{Gurtin96}.

Many phase-separating systems are driven, coupled to an environment, or
maintained away from equilibrium. At the phase-field level, one way to encode
this is to add to the chemical potential a term that is not the first
variation of an equilibrium free energy. This is the precise sense in which
we use the term \emph{active}. It is the same broad nonequilibrium distinction that
underlies Active Model B and related active phase-field theories, where
interfacial contributions break detailed balance
\cite{Cate18,Nardini17,TjhungNardiniCates18,Wittkowski14}. The particular
constitutive law studied below is different from the standard Active Model B
closure and is not presented as a microscopic derivation for a specific
active material. It defines a class of fourth-order, non-variational
phase-field evolutions in which the additional chemical-potential response is
localised at interfaces and is sensitive to the sign of an intrinsic
second-order quantity.

Curved substrates provide a natural setting for such a problem. Phase
separation on membranes and surfaces couples interfacial structure to the
geometry of the supporting space. Related passive and coupled surface
Cahn--Hilliard systems have been studied in
\cite{AbelsKampmann20,CaetanoElliott21,
CaetanoElliottGrasselliPoiatti23,DuJuTian11,ElliottRanner15,GarckeKampmannRaetzRoeger16}, while active membrane models provide a broader physical
motivation \cite{Gov04,ProstBruinsma96,RamaswamyTonerProst00}. We therefore
formulate the evolution intrinsically on a compact Riemannian manifold,
possibly with boundary. The geometry enters both the constitutive law and the
analysis through the Riemannian gradient, the Laplace--Beltrami operator, its
spectral realisation, and the Sobolev and elliptic estimates on $M$ (see
also \cite{Chavel}).

\subsection{Nonequilibrium structure and singular response}

Let $(M,g)$ be a compact, connected, $d$-dimensional Riemannian manifold,
possibly with boundary. We study
\begin{equation}
\label{intro:main-equation}
\begin{aligned}
\partial_tu
={}&
\Delta_g
\Bigl(
W'(u)-\mu\Delta_gu
-\nu a(\Delta_gu)|\nabla_gu|_g
\Bigr)
+
\lambda(x)\bigl(b(u_{\mathrm{ref}})-b(u)\bigr),
\\
&\quad\qquad
u(0,\cdot)=u_{\mathrm{in}},
\end{aligned}
\end{equation}
where
\[
W(u)=\frac14(1-u^2)^2,
\qquad
W'(u)=u^3-u,
\qquad
\mu,\nu>0.
\]
The classifier $a:\mathbb R\to\mathbb R$ is bounded, $C^1$,
non-decreasing, and satisfies $a(0)=0$. The coefficient $\lambda$
satisfies
\[
0<\lambda_*\leq\lambda(x)\leq\lambda^*<\infty
\qquad\text{for a.e. }x\in M.
\]
The anchoring law $b$ is
continuous, non-decreasing, satisfies $b(0)=0$, and, for some $q>6$
and positive constants $c_0,c_1,c_2$, obeys
\[
b(r)r\geq c_0|r|^q-c_1,
\qquad
|b(r)|\leq c_2\bigl(1+|r|^{q-1}\bigr).
\]
The model example is
\[
b(r)=\beta_q(r):=|r|^{q-2}r.
\]
The data satisfy
\[
u_{\mathrm{in}}\in L^2(M),
\qquad
u_{\mathrm{ref}}\in L^q(M).
\]

Equation \eqref{intro:main-equation} contains three mechanisms with different
structural roles. The terms $W'(u)-\mu\Delta_gu$ form the passive
Cahn--Hilliard chemical potential. The contribution
\[
m_{\rm act}[u]
=
-\nu a(\Delta_gu)|\nabla_gu|_g
\]
is a non-variational interfacial response. Finally,
\[
\lambda(x)\bigl(b(u_{\mathrm{ref}})-b(u)\bigr)
\]
describes local exchange with, or feedback toward, a prescribed reference
profile. The reference state $u_{\mathrm{ref}}$ is independent of the
initial state $u_{\mathrm{in}}$, and $\lambda(x)$ specifies the local
strength of the coupling.

The two factors in $m_{\rm act}$ have complementary meanings.
The factor $|\nabla_g u|_g$ concentrates the response in diffuse transition
layers, whereas $a(\Delta_g u)$ makes it sensitive to the sign of the
Laplace--Beltrami operator. For a locally one-dimensional transition profile,
this sign distinguishes the concave and convex sides of the layer. To see how geometry enters this distinction, let $\Gamma\subset M$
be a smooth interface, let $r$ be a signed geodesic distance,
and consider the inner profile
\[
u(x)\simeq q_*\bigl(r/\varepsilon\bigr).
\]
Formally,
\[
|\nabla_gu|_g
\simeq
\varepsilon^{-1}
\left|q_*'\bigl(r/\varepsilon\bigr)\right|,
\]
while
\[
\Delta_gu
\simeq
\varepsilon^{-2}q_*''\bigl(r/\varepsilon\bigr)
+\varepsilon^{-1}H_\Gamma q_*'\bigl(r/\varepsilon\bigr)
+\text{lower-order terms},
\]
up to the sign convention for the geodesic mean curvature $H_\Gamma$.
The leading response therefore resolves the two sides of the transition
profile through the sign of $q_*''$, while the mean curvature of the interface within \(M\) appears at the next order. In this precise sense the Riemannian
formulation is part of the constitutive model (only a change of notation).

The form of $m_{\rm act}$ is inherited from shock filtering
\cite{AlvarezMazorra94,Nov22,Osh} and is related to edge-selective and
anisotropic diffusion \cite{NovakCMS2026,PeronaMalik90,Weickert98}, but its
dynamical action is different from that of a second-order shock filter. Here
the classifier-dependent quantity is inserted into the chemical potential
and is then acted upon by $\Delta_g$. In particular, for
\[
a_N(r)=\frac{2}{\pi}\arctan(Nr),
\]
if $G=|\nabla_gu|_g$ is frozen locally and treated as spatially constant,
then in the regime $N|\Delta_gu|\ll1$,
\[
\Delta_g\bigl[-\nu a_N(\Delta_gu)G\bigr]
\simeq
-\frac{2\nu NG}{\pi}\Delta_g^2u.
\]
This has the sign of additional fourth-order damping. The shock-filter origin of the classifier therefore does not imply a universal sharpening
effect for \eqref{intro:main-equation}, the actual response depends on the state and parameter regime.

Although the full dynamics is not a gradient flow, it retains a useful
one-sided structure. Since $a$ is non-decreasing and $a(0)=0$,
\[
a(r)r\geq0,
\]
and hence the active term contributes
\[
-\nu\int_M
a(\Delta_gu)|\nabla_gu|_g\Delta_gu\,dV_g
\leq0
\]
to the basic $L^2$-balance. This is an analytical monotonicity mechanism,
not a dissipation identity for the Ginzburg--Landau energy. The anchoring
has a second monotone structure:
\[
\bigl(b(u_{\mathrm{ref}})-b(u)\bigr)
\bigl(u-u_{\mathrm{ref}}\bigr)
\leq0.
\]
Its coercivity controls large amplitudes and supplies the $L^q(Q_T)$
estimate required by the cubic double-well term. In the present compactness
argument, the strict inequality $q>6$ allows one to obtain strong
$L^6(Q_T)$-convergence and hence strong $L^2(Q_T)$-convergence of
$W'(u_m)$.

The restoring term also has a separate variational interpretation. If
\[
B(r):=\int_0^r b(s)\,ds,
\]
then $B$ is convex and, formally, the functional
\[
\mathcal A_{u_{\mathrm{ref}}}(v)
:=
\int_M\lambda(x)
\bigl(B(v)-b(u_{\mathrm{ref}})v\bigr)\,dV_g
\]
satisfies
\[
-\frac{\delta\mathcal A_{u_{\mathrm{ref}}}}{\delta v}
=
\lambda(x)\bigl(b(u_{\mathrm{ref}})-b(v)\bigr).
\]
The passive Cahn--Hilliard flux and the anchoring source therefore have
separate relaxational structures, in different metrics. It is the
classifier-dependent correction to the chemical potential that breaks the
equilibrium gradient-flow structure.
Source and proliferation terms in Cahn--Hilliard equations, and
feedback-stabilisation mechanisms for phase-field systems, have been studied
in \cite{Fakih2015,BarbuColliGilardiMarinoschi2017,MirAIMS}.

The source also determines the balance law. On a closed manifold, the
fourth-order part has zero mean and
\[
\frac{d}{dt}\int_Mu\,dV_g
=
\int_M\lambda(x)
\bigl(b(u_{\mathrm{ref}})-b(u)\bigr)\,dV_g.
\]
Thus the differential Cahn--Hilliard part remains conservative, whereas the
full anchored system does not. If the source is formally switched off, the
usual conservation law is recovered on a closed manifold. If
$\partial M\neq\emptyset$, we use the Dirichlet Laplacian and its associated
spectral/Navier fourth-order realisation. In that case the constant function
is not an admissible test function, and the formal mean balance also contains
the boundary flux of the full chemical potential. No conservation or
source-only balance is asserted for this boundary realisation.

The steep-classifier family introduces a second constitutive question.
Although
\[
a_N(r)\longrightarrow\operatorname{sgn}(r)
\qquad\text{for }r\neq0,
\]
the limiting law cannot be obtained by choosing a value for
$\operatorname{sgn}(0)$. If $z_N=c/N$, then $z_N\to0$, while
\[
a_N(z_N)=\frac{2}{\pi}\arctan(c).
\]
The stable closure is instead the maximal monotone graph
\[
\operatorname{Sign}(r)
=
\begin{cases}
\{1\}, & r>0,\\
[-1,1], & r=0,\\
\{-1\}, & r<0,
\end{cases}
\qquad
\operatorname{Sign}=\partial|\,\cdot\,|;
\]
see \cite{Brezis1973}. The interval at zero records the response that may
remain when the regularised Laplacians approach zero on the scale $N^{-1}$.
The singular limit is therefore a constitutive differential inclusion, not
the original equation with a single-valued convention inserted at zero.
For a fixed nonnegative interfacial weight, this graph is monotone in the
Laplacian variable; the dependence of that weight on $u$ means that the
complete fourth-order evolution remains non-variational.

Nonsmooth bulk free energies in classical Cahn--Hilliard theory lead to
obstacle-type variational inequalities, equivalently to maximal-monotone
subdifferential graphs acting on the order parameter itself
\cite{BloweyElliott91}. Here the graph acts on $\Delta_gu$, is
weighted by $|\nabla_gu|_g$, and enters a non-variational chemical
potential. It is therefore located at the highest derivative controlled by
the basic estimate and is not a standard additive monotone perturbation.
Moreover, $N\to\infty$ changes the steepness of this constitutive response.
It is not a sharp-interface limit: no interfacial-width parameter is sent to zero, and in particular \(\mu\) remains fixed. 
The problem addressed below is whether these smooth, geometrically intrinsic constitutive laws select a closed and stable evolution when their response
becomes discontinuous. This requires compactness at the level of $\Delta_gu$, identification of the limiting graph, and stability of the
state even when the multiplier itself is not uniquely determined.

Singular limits and compactness problems for Cahn--Hilliard equations
also arise in kinetic and nonlocal models; see, for instance,
\cite{ElbarPerthameSkrzeczkowski2024} and the earlier kinetic derivation
\cite{ElbarMasonPerthameSkrzeczkowski2023}. In those works, the limit changes
the scale or effective description of the model. In the present paper, the
diffuse-interface thickness is kept fixed, and the singular parameter only
makes the active constitutive response steeper.

\subsection{Analytical obstruction and compactness mechanism}

The main analytical issue is the closure of the constitutive product
\[
a(\Delta_gu)|\nabla_gu|_g.
\]
For Galerkin approximations $u_m$, the basic estimates give
\[
u_m
\quad\text{bounded in}\quad
L^\infty(0,T;L^2(M))
\cap L^2(0,T;H_D^2(M))
\cap L^q(Q_T).
\]
Aubin--Lions compactness yields strong convergence of $\nabla_gu_m$ in
$L^2(Q_T)$, but initially only weak $L^2(Q_T)$-compactness of
$\Delta_gu_m$. Nonlinear composition with $a$ is not weakly continuous,
so these convergences do not identify $a(\Delta_gu_m)$, and boundedness of
$a$ gives only an unidentified weak limit of the active product.

We obtain the missing second-order compactness by a one-sided comparison at
the Galerkin level. After replacing the preliminary weak limit $u$ by a
simultaneous time--space regularisation $u^\zeta$, we compare $u_m$ with
its spectral projection $P_mu^\zeta$ and test the Galerkin equation with
\[
u_m-P_mu^\zeta.
\]
The decisive point is the pointwise inequality
\[
\bigl(a(\Delta_gu_m)-a(\Delta_gP_mu^\zeta)\bigr)
\bigl(\Delta_gu_m-\Delta_gP_mu^\zeta\bigr)
|\nabla_gu_m|_g
\geq0.
\]
The monotonicity of $b$ gives an analogous one-sided estimate for the
anchoring term, while the passive biharmonic part controls
$\Delta_g(u_m-P_mu^\zeta)$. Passing first $m\to\infty$ and then
$\zeta\downarrow0$ yields
\[
\Delta_gu_m\longrightarrow\Delta_gu
\qquad\text{strongly in }L^2(Q_T).
\]
Elliptic regularity then gives strong convergence in
$L^2(0,T;H_D^2(M))$, and the active product is identified strongly in
$L^2(Q_T)$. This strong second-order convergence is the closure mechanism
for the active chemical potential (it is not a direct consequence of the usual Aubin--Lions argument). Notice also that the full map
$u\mapsto a(\Delta_gu)|\nabla_gu|_g$ is not asserted to be monotone. The proof uses the scalar monotonicity of $a$ within the one-sided comparison.

The same mechanism is consistent under steepening of the classifier. For
\[
a_N(r)=\frac{2}{\pi}\arctan(Nr),
\qquad
a_N'(0)=\frac{2N}{\pi},
\]
the slopes are not uniformly bounded as $N\to\infty$. Our estimates use
instead only $|a_N|\leq1$ and monotonicity and are therefore uniform in
$N$. This is the bridge from the smooth equations to the sign-graph
inclusion.

\subsection{Main results and their significance}

The results can be summarized as follows.

\begin{enumerate}[(i)]
\item
For every bounded $C^1$ non-decreasing classifier $a$ with $a(0)=0$,
Theorem~\ref{thm:existence-weak-solution} gives a weak solution for
\[
u_{\mathrm{in}}\in L^2(M),
\qquad
u_{\mathrm{ref}}\in L^q(M),
\qquad q>6,
\]
in any dimension. More importantly, the Galerkin construction satisfies
\[
u_m\longrightarrow u
\quad\text{strongly in }L^2(0,T;H_D^2(M)),
\]
and
\[
a(\Delta_gu_m)|\nabla_gu_m|_g
\longrightarrow
a(\Delta_gu)|\nabla_gu|_g
\quad\text{strongly in }L^2(Q_T).
\]
The limit is therefore an ordinary weak solution with a pointwise identified active flux.

\item
In dimension two, Theorem~\ref{thm:uniqueness-beta} proves uniqueness and
continuous dependence for the smooth-classifier problem. If $u$ and $v$
correspond to the data pairs
$(u_{\mathrm{in}},u_{\mathrm{ref}})$ and
$(v_{\mathrm{in}},v_{\mathrm{ref}})$, and
\[
b(u_{\mathrm{ref}})-b(v_{\mathrm{ref}})\in L^2(M),
\]
then
\begin{align*}
&\sup_{t\in[0,T]}\|u(t)-v(t)\|_{L^2(M)}^2
+\int_0^T\|\Delta_g(u-v)(t)\|_{L^2(M)}^2\,dt
\\
&\qquad\leq
C_T\left(
\|u_{\mathrm{in}}-v_{\mathrm{in}}\|_{L^2(M)}^2
+\|b(u_{\mathrm{ref}})-b(v_{\mathrm{ref}})\|_{L^2(M)}^2
\right).
\end{align*}
Agmon's inequality controls the cubic double-well difference in dimension
two. The active estimate uses boundedness and
monotonicity of $a$, but not a bound for $a'$. This distinction is
important for the steep-classifier limit.

\item
Theorem~\ref{thm:steep-classifier-limit} identifies the singular limit of the
specific family
\[
a_N(r)=\frac{2}{\pi}\arctan(Nr).
\]
In arbitrary dimension, every sequence of corresponding weak solutions has
a subsequence for which
\[
u_N\longrightarrow u
\quad\text{strongly in }L^2(0,T;H_D^2(M)),
\]
\[
a_N(\Delta_gu_N)\stackrel{*}{\rightharpoonup}\xi
\quad\text{in }L^\infty(Q_T),
\qquad
\xi\in\operatorname{Sign}(\Delta_gu)
\quad\text{a.e. in }Q_T,
\]
and
\[
a_N(\Delta_gu_N)|\nabla_gu_N|_g
\rightharpoonup
\xi|\nabla_gu|_g
\quad\text{weakly in }L^2(Q_T).
\]
The limit is the differential inclusion
\[
\partial_tu
=
\Delta_g\Bigl(
W'(u)-\mu\Delta_gu-\nu\xi|\nabla_gu|_g
\Bigr)
+\lambda(x)\bigl(b(u_{\mathrm{ref}})-b(u)\bigr),
\qquad
\xi\in\operatorname{Sign}(\Delta_gu).
\]
In dimension two, Theorem~\ref{thm:sign-graph-uniqueness} shows that the
state $u$ is unique and continuously dependent on the data, and
Corollary~\ref{cor:full-steep-convergence} upgrades subsequential convergence
to convergence of the entire family $u_N$. The multiplier $\xi$ may
remain non-unique on $\{\Delta_gu=0\}$, but this does not affect
uniqueness of the state $u$.

\item
Section~\ref{sec:numerics} tests the structures used in the analysis. A prescribed exact solution verifies the complete smooth-classifier scheme. A weighted convex formulation solves the sign-graph inclusion directly at the discrete level and is compared with finite classifiers under coupled refinement. Paired spinodal experiments, a multi-direction
continuous-dependence test, and an audited reconstruction stress test then examine observable consequences of the active response. In the resolved regime studied here, activity increases the characteristic spinodal length and reduces interfacial content, while it does not improve phase purity or reconstruction error. These computations support the local damping interpretation above.
\end{enumerate}

These results show that a discontinuous, concavity-sensitive
constitutive law can be obtained as a controlled limit of smooth active
phase-field equations. Strong compactness closes the second-derivative
response, while monotonicity selects a unique two-dimensional state even
when the limiting constitutive multiplier is not uniquely determined. This
separation between state selection and multiplier selection is one of the
main features of the singular limit.

\subsection{Relation to earlier work}

Two related, but mathematically different, lines of work should first be
distinguished. Classical shock filters are second-order morphological
evolutions in which the sign of a second derivative selects between dilation
and erosion \cite{Osh}. Edge-selective and anisotropic diffusion provide a
complementary class of image evolutions \cite{PeronaMalik90,Weickert98}, and
combined shock-filter/diffusion models were studied in
\cite{AlvarezMazorra94}. In the present equation,
the classifier-dependent term is instead part of the chemical potential and
is acted upon by an additional Laplacian. A related two-scale shock--diffusion mechanism was considered in \cite{NovakCMS2026}, it belongs
to a second-order morphological class and does not contain a Cahn--Hilliard chemical potential or the sign-graph limit studied here.

Shock-dependent classifiers were incorporated into modified Euclidean Cahn--Hilliard equations for image inpainting in \cite{Nov22}. The closest
(analytical) predecessor is the Euclidean equation studied in \cite{ARMA},
\[
\partial_tu
=
\Delta\bigl(-\nu\arctan(\Delta u)|\nabla u|-\mu\Delta u\bigr)
+\lambda(x)(u_0-u).
\]
At the level of the differential expression, this corresponds to removing
the double-well term from \eqref{intro:main-equation}, choosing a linear
fidelity law, and fixing an arctangent classifier, up to normalisation. The
hypotheses are not literal special cases of those imposed here: the
inpainting coefficient in \cite{ARMA} may vanish on the damaged region,
whereas we assume $\lambda\geq\lambda_*>0$, and the reference profile in
the present paper is prescribed independently of the initial state.

The main compactness obstruction in \cite{ARMA} was already the nonlinear
dependence on the Laplacian. There the regularised solutions converged
strongly at the first-derivative level, while their Laplacians were available
only weakly in $L^2$. The limiting active response was consequently encoded
by an entropy admissibility condition and, in variational form, by a Young
measure associated with the approximate Laplacians. Existence and uniqueness
were obtained within that framework, but the nonlinear term was not
identified as $\arctan(\Delta u)|\nabla u|$ in an ordinary weak
formulation without an additional concentration assumption.

The present argument resolves this loss of second-order compactness. For
every bounded smooth non-decreasing classifier, the Galerkin-level comparison
identifies the active product directly in the weak equation. The model also
retains the cubic double-well force, separates the initial and reference
states, allows a general monotone coercive anchoring law, and is formulated intrinsically on a compact Riemannian manifold. Taken together, these changes produce a different weak-solution theory that can not be obtained by formal replacement of Euclidean derivatives by their geometric counterparts. 

The steep-classifier limit is also different from the spatial regularisation
used in \cite{ARMA}. Here the constitutive laws themselves vary with $N$
and converge to a discontinuous graph. Strong convergence of the
corresponding Laplacians identifies
\[
\xi\in\operatorname{Sign}(\Delta_gu)
\qquad\text{a.e. in }Q_T,
\]
and two-dimensional state uniqueness upgrades subsequential compactness to
convergence of the complete family, despite possible nonuniqueness of the
multiplier on the zero-Laplacian set.

Cahn--Hilliard image inpainting and higher-order reconstruction
\cite{Bert07d,Bert07,Burg09,Mira16,Cher15} remain relevant to the origin of
the model and to one numerical stress test. In this contribiution the focus is different:
the central object is a nonequilibrium geometric phase-field equation, and the main questions are closure of its second-derivative-dependent chemical potential, stability of the resulting weak evolution, and state selection in a singular constitutive limit. The numerical section accordingly illustrates the discrete graph law and phase-separation response in addition to a reconstruction example.

\subsection{Organization of the paper}

Section~\ref{sec:model} introduces the geometric setting, the boundary
realisation, the assumptions on the classifier and anchoring law, and the
weak formulation. Section~\ref{sec:stability} constructs weak solutions for
bounded smooth monotone classifiers, proves the strong second-order
compactness needed to identify the active term, and establishes uniqueness
and continuous dependence in dimension two. Section~\ref{sec:steep-classifier}
treats the steep-classifier limit, identifies the maximal-monotone sign-graph
inclusion, and proves two-dimensional state uniqueness and full-family
convergence. Section~\ref{sec:numerics} develops the smooth and direct-graph
discretisations, verifies the implementation, and reports the graph-limit,
spinodal, stability, and reconstruction experiments together with their
numerical error budgets.

\section{Model and preliminaries}
\label{sec:model}

\subsection{Geometric setting and notation}

Let $(M,g)$ be a compact, connected, $d$-dimensional Riemannian manifold,
possibly with non-empty boundary $\partial M$. We assume that the boundary is
sufficiently regular, for instance of class $C^{1,1}$, so that the usual trace,
elliptic regularity, and spectral results for the Laplace--Beltrami operator
are available. We fix $T>0$ and write
\[
Q_T:=(0,T)\times M.
\]
The Riemannian volume measure will be denoted by $dV_g$. If $\partial M\neq
\emptyset$, the induced surface measure on $\partial M$ is denoted by
$dS_g$, and $\mathbf n$ stands for the outward unit normal.

We briefly recall the geometric notation used throughout the paper. The metric
$g$ induces a pointwise scalar product on tangent vectors, denoted by
\[
\langle X,Y\rangle_g=g(X,Y),
\]
and the associated norm is written as
\[
|X|_g=\sqrt{\langle X,X\rangle_g}.
\]
In local coordinates $(x^1,\ldots,x^d)$, the metric is represented by the
matrix $(g_{ij})$, its inverse by $(g^{ij})$, and the volume element is
\[
dV_g=\sqrt{|g|}\,dx,
\qquad |g|:=\det(g_{ij}).
\]

The Levi--Civita connection associated with $g$ will be denoted by
$\nabla^g$. It is the unique connection on $TM$ which is torsion-free and
compatible with the metric, namely
\[
\nabla^g_XY-\nabla^g_YX=[X,Y],
\qquad
X\langle Y,Z\rangle_g
=
\langle \nabla^g_XY,Z\rangle_g
+
\langle Y,\nabla^g_XZ\rangle_g.
\]
For a scalar function $u$, the Riemannian gradient $\nabla_g u$ is the vector
field defined by
\[
\langle \nabla_g u,X\rangle_g=Xu
\qquad\text{for every vector field }X.
\]
In local coordinates,
\[
(\nabla_g u)^i=g^{ij}\partial_j u.
\]
The Hessian of $u$ is the covariant two-tensor
\[
\nabla_g^2 u:=\nabla^g(du),
\]
and the divergence of a vector field $X$ is defined by
\[
\operatorname{div}_g X
=
\frac{1}{\sqrt{|g|}}\partial_i\bigl(\sqrt{|g|}X^i\bigr).
\]
We use the sign convention
\[
\Delta_g u:=\operatorname{div}_g(\nabla_g u).
\]
Thus, in local coordinates,
\[
\Delta_g u
=
\frac{1}{\sqrt{|g|}}
\partial_i\left(\sqrt{|g|}\,g^{ij}\partial_j u\right).
\]
With this convention, the operator $-\Delta_g$ is non-negative under Dirichlet
boundary conditions.

For sufficiently smooth functions $u,\varphi$, Green's formula reads
\[
\int_M \varphi\,\Delta_g u\,dV_g
=
-\int_M \langle \nabla_g \varphi,\nabla_g u\rangle_g\,dV_g
+
\int_{\partial M}\varphi\,\partial_{\mathbf n,g}u\,dS_g,
\]
where
\[
\partial_{\mathbf n,g}u:=\langle \nabla_g u,\mathbf n\rangle_g.
\]
In particular, if $\varphi=0$ on $\partial M$, the boundary term vanishes.

The unknown
\[
u=u(t,x)
\]
is a scalar order parameter on the manifold. It may represent, for instance, the relative concentration of two phases constrained to a curved surface.
Values close to two preferred states correspond to the two pure phases, while transition layers represent diffuse interfaces. The equation considered below combines passive Cahn--Hilliard relaxation with a non-variational, gradient-localised concavity response.  Although its constitutive form is
inherited from shock-filter constructions, no unconditional interfacial sharpening property is assumed. Monotonicity of the classifier gives a
favourable sign in the basic $L^2$-estimate, while its morphological action depends on the parameter regime and the evolving state.

\subsection{The active Cahn--Hilliard family on a manifold}

Let $\mu,\nu>0$ be fixed. We consider the equation
\begin{equation}
\label{eq:active-CH-manifold}
\partial_t u
=
\Delta_g\Bigl(
W'(u)
-\mu\Delta_g u
-\nu a(\Delta_g u)|\nabla_g u|_g
\Bigr)
+\lambda(x)(b(u_{\mathrm{ref}})-b(u))
\qquad\text{in }Q_T.
\end{equation}
The initial condition is
\begin{equation}
\label{ic-general}
u(0,x)=u_{\mathrm{in}}(x),
\qquad x\in M.
\end{equation} 

We assume that
\begin{equation}
\label{ass:initial-reference-data}
u_{\mathrm{in}}\in L^2(M),
\qquad
u_{\mathrm{ref}}\in L^q(M),
\qquad
q>6.
\end{equation}
The assumption on $u_{\mathrm{in}}$ is sufficient for the initial trace and
the basic energy estimate.  On the other hand,
$u_{\mathrm{ref}}\in L^q(M)$ implies
\[
b(u_{\mathrm{ref}})
\in L^{q'}(M),
\qquad
q'=\frac{q}{q-1},
\]
which guarantees that the prescribed anchoring term is well defined in the
weak formulation.

If $\partial M\neq\emptyset$, we use the Dirichlet realization
$A=-\Delta_g$ with $D(A)=H^2(M)\cap H_0^1(M)$, and the fourth-order
operator is interpreted through the spectral/Navier form associated with $A^2$.
Thus $u=0$ is imposed in the trace sense. The second Navier condition
$\Delta_g u=0$ is built into the chosen spectral/Navier realization of the
fourth-order operator and is recovered only for sufficiently regular
solutions (not as a separate trace condition at the weak level). For this reason, when writing the model in its formal strong form,
we shall use the shorthand notation
\begin{equation}
\label{bc-navier}
u(t,x)=0,
\qquad
\Delta_g u(t,x)=0,
\qquad (t,x)\in(0,T)\times\partial M.
\end{equation}
The condition \eqref{bc-navier} is therefore only a formal boundary notation
for the strong problem and is not part of the weak formulation below. If
$M$ is closed, that is, if $\partial M=\emptyset$, no boundary condition is
imposed.

The quantity
\[
W'(u)-\mu\Delta_g u
\]
is the usual Cahn--Hilliard chemical potential. The function $W$ is a
double-well potential. The model example is
\[
W(u)=\frac14(1-u^2)^2,
\qquad
W'(u)=u^3-u.
\]
The anchoring law
\[
b:\mathbb R\to\mathbb R
\]
is assumed to be continuous, non-decreasing, and coercive of supercubic
growth. More precisely, we assume that there exist an exponent $q>6$ and
constants $c_0,c_1,c_2>0$ such that
\[
b(r)r\geq c_0|r|^q-c_1,
\qquad
|b(r)|\leq c_2\bigl(1+|r|^{q-1}\bigr),
\qquad r\in\mathbb R.
\]
We also assume that $b(0)=0$, when symmetry between the two phases is
desired, $b$ may additionally be taken to be odd. The associated primitive
\[
B(r):=\int_0^rb(s)\,ds
\]
is convex, and the monotonicity of $b$ gives
\[
\bigl(b(r)-b(s)\bigr)(r-s)\geq0,
\qquad r,s\in\mathbb R.
\]
The coercivity assumption provides the $L^q(Q_T)$ control needed to identify
the cubic double-well term in the weak limit. Moreover, the growth condition
implies that
\[
u_{\mathrm{ref}}\in L^q(M)
\quad\Longrightarrow\quad
b(u_{\mathrm{ref}})\in L^{q'}(M),
\qquad
q'=\frac{q}{q-1}.
\]
The principal example is the power-law anchoring
\[
b(r)=\beta_q(r):=|r|^{q-2}r,
\qquad q>6.
\]

If globally Lipschitz control of the double-well derivative is required, the
potential $W$ may instead be replaced by a smooth truncated potential $W_R$.
The additional term
\[
-\nu a(\Delta_g u)|\nabla_g u|_g
\]
is a non-variational, gradient-localised concavity response.  Its rigorous
structural property is monotonicity rather than unconditional anti-diffusion:
since $a(r)r\geq0$,
\[
-\nu\int_M
a(\Delta_g u)|\nabla_g u|_g\Delta_g u\,dV_g
\leq0,
\]
so it contributes with a favourable sign to the basic $L^2$-balance.
Moreover, if $G=|\nabla_g u|_g\geq0$ is frozen locally, treated as
spatially constant, and $|\Delta_g u|$ is small, then for
$a_N(r)=2\arctan(Nr)/\pi$,
\[
\Delta_g\!\left[-\nu a_N(\Delta_g u)G\right]
\simeq
-\frac{2\nu NG}{\pi}\Delta_g^2u.
\]
Thus, in this locally linearised regime, the active term has the same sign as
additional fourth-order damping.  Sharpening is not a structural consequence
of the classifier, and if it occurs in another regime, it must be demonstrated through a specified observable.  The computations in
Section~\ref{sec:numerics} identify instead a reduced-interface/coarsening
regime consistent with this local damping calculation.
The relaxation term
\[
\lambda(x)(b(u_{\mathrm{ref}})-b(u))
\]
is a nonlinear anchoring term. It has the same qualitative effect as the
standard linear fidelity term $\lambda(x)(u_{\mathrm{ref}}-u)$: where
$\lambda$ is large, the solution is strongly tied to the prescribed state
$u_{\mathrm{ref}}$, while where
$\lambda$ is small the dynamics is governed mainly by the differential
operator. The q-power form is chosen because it is slightly supercritical with respect to the
standard cubic double-well potential. Indeed, testing the equation by $u$ gives a
coercive contribution controlling $u$ in $L^q(Q_T)$ with $q>6$, which is stronger than the
integrability needed for the cubic term $W'(u)=u^3-u$.

We assume throughout that the anchoring coefficient satisfies
\begin{equation}
\label{ass-lambda}
\lambda\in L^\infty(M),
\qquad
0<\lambda_*\leq \lambda(x)\leq \lambda^*
\quad\text{for a.e. }x\in M,
\end{equation}
for some constants $\lambda_*,\lambda^*>0$.

The initial and reference data have different analytical roles, as recorded
in \eqref{ass:initial-reference-data}. The assumption
$u_{\mathrm{in}}\in L^2(M)$ supplies the initial trace and the
initial-difference term in the stability estimate, whereas
$u_{\mathrm{ref}}\in L^q(M)$ implies
$b(u_{\mathrm{ref}})\in L^{q'}(M)$ and makes the prescribed anchoring
functional meaningful.

\begin{assumption}[Classifier]
\label{ass:aclass}
The classifier $a:\mathbb R\to\mathbb R$ satisfies:
\begin{enumerate}[(i)]
\item $a\in C^1(\mathbb R)$;
\item $a$ is non-decreasing and such that $a(0)=0$;
\item there exists a constant $M_a>0$ such that
\[
|a(r)|\leq M_a
\qquad\text{for all }r\in\mathbb R.
\]
\end{enumerate}
\end{assumption}

A basic family of smooth classifiers is
\begin{equation}
\label{classifier-family}
a_N(r)=\frac{2}{\pi}\arctan(Nr),
\qquad N\in\mathbb N.
\end{equation}
For each fixed $N$, $a_N$ is smooth, odd (and thus $a_N(0)=0$), bounded,
and non-decreasing. Moreover,
\[
\lim_{N\to\infty}a_N(r)=\operatorname{sgn}(r)
\qquad\text{for every }r\neq 0.
\]
Thus the family \eqref{classifier-family} approximates a sign-type classifier
in the steep-classifier regime.

The Euclidean shock-filter Cahn--Hilliard equation studied in \cite{ARMA} is
recovered from \eqref{eq:active-CH-manifold} by taking $M$ to be a Euclidean
domain, replacing $\nabla_g$ and $\Delta_g$ by the usual Euclidean operators,
removing the double-well term, and using the corresponding linear anchoring
term. The present paper treats the more general geometric and double-well
model while working with ordinary weak solutions rather than
Young-measure-valued solution concepts.

\subsection{Functional framework}

For $k\in\mathbb N$, we denote by $H^k(M)$ the usual Sobolev space on the
Riemannian manifold $M$. The space $H_0^1(M)$ is defined as the closure of
$C_c^\infty(M^\circ)$ in $H^1(M)$, where $M^\circ$ denotes the interior
of $M$. We set
\[
H_D^2(M):=
\begin{cases}
H^2(M)\cap H_0^1(M), & \partial M\neq\emptyset,\\
H^2(M), & \partial M=\emptyset.
\end{cases}
\]
equipped with the norm
\[
\|u\|_{H_D^2(M)}:=\|u\|_{H^2(M)}.
\]
The trace condition $u=0$ on $\partial M$ is encoded in the definition of
$H_D^2(M)$. The second Navier condition $\Delta_g u=0$ on $\partial M$
will be understood in the weak sense associated with the formulation below.

Since the equation contains the nonlinear anchoring term $b(u)$, it is useful
to choose the test space so that both the fourth-order part and the anchoring
term are meaningful in arbitrary dimension. We therefore define
\[
\mathcal Y:=H_D^2(M)\cap L^q(M),
\]
with norm
\[
\|\varphi\|_{\mathcal Y}
:=
\|\varphi\|_{H_D^2(M)}+\|\varphi\|_{L^q(M)}.
\]
Then $\mathcal Y$ is a reflexive Banach space, continuously embedded into
$L^2(M)$. We denote its dual by
\[
\mathcal Y':=(\mathcal Y)'.
\]
The duality pairing between $\mathcal Y'$ and $\mathcal Y$ will be denoted
by
\[
\langle\cdot,\cdot\rangle_{\mathcal Y',\mathcal Y},
\]
or simply by $\langle\cdot,\cdot\rangle$ when no confusion can arise.

The reason for introducing $\mathcal Y$ is the following. If
$u\in L^q(M)$, then $b(u)\in L^{q'}(M)$, where
$q'=q/(q-1)$, and hence
\[
\left|\int_M b(u)\varphi\,dV_g\right|
\leq
\|b(u)\|_{L^{q'}(M)}\|\varphi\|_{L^q(M)}
\leq
C\bigl(1+\|u\|_{L^q(M)}^{q-1}\bigr)
\|\varphi\|_{L^q(M)}
\leq
C\bigl(1+\|u\|_{L^q(M)}^{q-1}\bigr)
\|\varphi\|_{\mathcal Y}.
\]
Thus $b(u)$ defines an element of $\mathcal Y'$. At the same time, since
$\mathcal Y\subset H_D^2(M)$, we have
\[
\Delta_g\varphi\in L^2(M)
\qquad\text{for every }\varphi\in\mathcal Y,
\]
which is exactly what is needed for the fourth-order terms.

The embedding $H_D^2(M)\hookrightarrow L^q(M)$ holds only under the usual
Sobolev restrictions. More precisely, it holds for every finite $q$ if
$d\le4$, while for $d>4$ it holds only for
\[
q\le \frac{2d}{d-4}.
\]
Thus, when $q$ is outside this range, the intersection space
$\mathcal Y=H_D^2(M)\cap L^q(M)$ must be kept explicitly.

\noindent For time-dependent functions we use the spaces
\[
L^2(0,T;H_D^2(M)),
\qquad
L^6(Q_T),
\qquad
W^{1,q'}(0,T;\mathcal X').
\]
The natural energy class is
\[
u\in L^\infty(0,T;L^2(M))
\cap L^2(0,T;H_D^2(M))
\cap L^q(Q_T).
\]
Fix $s\geq 2$ so large that
\[
\mathcal X:=D((I+A)^{s/2})\hookrightarrow H_D^2(M)\cap L^q(M).
\] 
The space $\mathcal X$ is equipped with the norm
\[
\|\varphi\|_{\mathcal X}
:=
\|(I+A)^{s/2}\varphi\|_{L^2(M)}.
\]
Since $s\ge2$, spectral calculus gives
\[
\mathcal X\hookrightarrow H_D^2(M),
\]
and since $s>d(1/2-1/q)$, Sobolev embedding gives
\[
\mathcal X\hookrightarrow L^q(M).
\] Consequently,
\[
\mathcal X\hookrightarrow \mathcal Y
=
H_D^2(M)\cap L^q(M)
\]
continuously. Thus the fourth-order terms and the $q$-power anchoring term
can both be tested against functions in $\mathcal X$. We denote the dual of
$\mathcal X$ by $\mathcal X'$.

\noindent
Furthermore, we require
\[
\partial_t u\in L^{q'}(0,T;\mathcal X').
\]

The condition $u\in L^6(Q_T)$ is needed in the untruncated double-well case
in order to ensure
\[
W'(u)=u^3-u\in L^2(Q_T).
\]
It is also compatible with the nonlinear anchoring term
$\lambda(x)(b(u_{\mathrm{ref}})-b(u))$. If a smooth truncated potential
$W_R$ with
globally Lipschitz derivative is used instead, the $L^6$-condition is not
needed for the double-well term, although it is still natural when the
q-power anchoring is present.

We shall repeatedly use the elliptic estimate
\begin{equation}
\label{elliptic-estimate}
\|v\|_{H^2(M)}
\leq
C\bigl(
\|\Delta_g v\|_{L^2(M)}
+
\|v\|_{L^2(M)}
\bigr),
\qquad v\in H_D^2(M),
\end{equation}
where $C>0$ depends only on $(M,g)$.

Let $A=-\Delta_g$ denote the following self-adjoint realization of the
Laplace--Beltrami operator on $L^2(M)$. If $\partial M\neq\emptyset$, then
$A$ is the Dirichlet Laplace--Beltrami operator, with
\[
D(A)=H^2(M)\cap H_0^1(M).
\]
If $M$ is closed, then $A$ is the non-negative self-adjoint
Laplace--Beltrami operator with
\[
D(A)=H^2(M).
\]
In both cases $A$ has compact resolvent. Hence there exist eigenvalues
\[
0\leq \lambda_1\leq \lambda_2\leq\cdots,
\qquad \lambda_k\to\infty,
\]
and an orthonormal basis $\{e_k\}_{k\geq1}$ of $L^2(M)$ such that
\[
A e_k=\lambda_k e_k .
\]
In the boundary case, the eigenfunctions satisfy the Dirichlet condition
\[
e_k|_{\partial M}=0 ,
\]
whereas in the closed case no boundary condition is imposed. Moreover,
$\lambda_1>0$ if $\partial M\neq\emptyset$, while $\lambda_1=0$ in the
closed case. If $M$ is closed and connected, then the constants span
$\ker A$.

For $m\in\mathbb N$, we denote by
\[
P_m:L^2(M)\to \mathcal V_m:=\operatorname{span}\{e_1,\ldots,e_m\}
\]
the $L^2$-orthogonal projection,
\[
P_m v=\sum_{k=1}^m (v,e_k)_{L^2(M)}e_k.
\]
The projection $P_m$ will be used in the Galerkin approximation and, in
particular, to make comparison functions admissible as Galerkin test
functions. Since $u_m\in \mathcal V_m$ implies $P_m u_m=u_m$, the
classifier in the Galerkin equation is simply evaluated at $\Delta_g u_m$.
When comparing $u_m$ with a smooth function $u^\zeta$, we use
$P_m u^\zeta$, so that $u_m-P_m u^\zeta\in\mathcal V_m$.

\begin{definition}[Weak solution]
\label{def:weak-solution}
Let $q>6$,
\[
u_{\mathrm{in}}\in L^2(M),
\qquad
u_{\mathrm{ref}}\in L^q(M),
\]
and assume \eqref{ass-lambda}, Assumption~\ref{ass:aclass}, and the standing
continuity, monotonicity, coercivity, and growth assumptions on $b$. A function
\[
u\in L^\infty(0,T;L^2(M))
\cap L^2(0,T;H_D^2(M))
\cap L^q(Q_T),
\qquad
\partial_tu\in L^{q'}(0,T;\mathcal X'),
\qquad
q'=\frac{q}{q-1},
\]
is called a weak solution of
\eqref{eq:active-CH-manifold}--\eqref{bc-navier} if, after redefining $u$
on a set of times of measure zero,
\[
u\in C_{\mathrm w}([0,T];L^2(M)),
\qquad
u(0)=u_{\mathrm{in}}
\quad\text{in }L^2(M),
\]
and, for every $\varphi\in\mathcal X$, the identity
\begin{align}
\label{weak-form-manifold}
\langle \partial_tu(t),\varphi\rangle_{\mathcal X',\mathcal X}
&=
\int_M
\Bigl(
W'(u)
-\mu\Delta_gu
-\nu a(\Delta_gu)|\nabla_gu|_g
\Bigr)
\Delta_g\varphi\,dV_g
\nonumber\\
&\quad+
\int_M
\lambda(x)\bigl(b(u_{\mathrm{ref}})-b(u)\bigr)
\varphi\,dV_g
\end{align}
holds for a.e. $t\in(0,T)$.

Equivalently, for every
\[
\varphi\in C^1([0,T];\mathcal X)
\]
and every $t\in[0,T]$, one has
\begin{align}
\label{weak-form-integrated}
&\int_M u(t)\varphi(t)\,dV_g
-
\int_M u_{\mathrm{in}}\varphi(0)\,dV_g
-
\int_0^t\int_M u\,\partial_t\varphi\,dV_g\,ds
\nonumber\\
&=
\int_0^t\int_M
\Bigl(
W'(u)
-\mu\Delta_gu
-\nu a(\Delta_gu)|\nabla_gu|_g
\Bigr)
\Delta_g\varphi\,dV_g\,ds
\nonumber\\
&\quad+
\int_0^t\int_M
\lambda(x)\bigl(b(u_{\mathrm{ref}})-b(u)\bigr)
\varphi\,dV_g\,ds.
\end{align}
\end{definition}

The definition is meaningful for the following reasons. Since
\[
u\in L^2(0,T;H_D^2(M)),
\]
and $H_D^2(M)\hookrightarrow H^1(M)$ continuously on a compact manifold,
we have
\[
\Delta_gu\in L^2(Q_T),
\qquad
\nabla_gu\in L^2(Q_T).
\]
The boundedness of $a$ therefore gives
\[
a(\Delta_gu)|\nabla_gu|_g\in L^2(Q_T).
\]
Moreover, since $q>6$ and $Q_T$ has finite measure,
\[
u\in L^q(Q_T)\hookrightarrow L^6(Q_T),
\]
and hence, for the standard double-well potential,
\[
W'(u)=u^3-u\in L^2(Q_T).
\]
Consequently,
\[
W'(u)-\mu\Delta_gu
-\nu a(\Delta_gu)|\nabla_gu|_g
\in L^2(Q_T).
\]
For every $\varphi\in\mathcal X$, we have
$\Delta_g\varphi\in L^2(M)$, and therefore
\[
\varphi\longmapsto
\int_M
\Bigl(
W'(u)-\mu\Delta_gu
-\nu a(\Delta_gu)|\nabla_gu|_g
\Bigr)
\Delta_g\varphi\,dV_g
\]
defines an element of $\mathcal X'$ for a.e. $t$, and belongs to
$L^2(0,T;\mathcal X')$.

For the anchoring term, the growth assumption
\[
|b(r)|\leq c_b\bigl(1+|r|^{q-1}\bigr),
\qquad r\in\mathbb R,
\]
together with
\[
u\in L^q(Q_T),
\qquad
u_{\mathrm{ref}}\in L^q(M),
\]
implies
\[
b(u)\in L^{q'}(Q_T),
\qquad
b(u_{\mathrm{ref}})\in L^{q'}(M).
\]
Indeed,
\[
|b(r)|^{q'}
\leq C\bigl(1+|r|^q\bigr).
\]
Since $\lambda\in L^\infty(M)$ and
$\mathcal X\hookrightarrow L^q(M)$, the map
\[
\varphi\longmapsto
\int_M
\lambda(x)\bigl(b(u_{\mathrm{ref}})-b(u)\bigr)
\varphi\,dV_g
\]
defines an element of $\mathcal X'$ and belongs to
$L^{q'}(0,T;\mathcal X')$. Therefore the entire right-hand side of
\eqref{weak-form-manifold} belongs to
$L^{q'}(0,T;\mathcal X')$, since
\[
L^2(0,T;\mathcal X')
\hookrightarrow
L^{q'}(0,T;\mathcal X')
\]
for $q'<2$ and $T<\infty$.

The integrated formulation \eqref{weak-form-integrated} follows from
\eqref{weak-form-manifold} by testing in time and integrating by parts.

Equivalently, this can be justified by the standard Steklov regularisation in
time: for $h>0$,
\[
z^h(t):=\frac1h\int_t^{t+h}z(s)\,ds,
\qquad 0<t<T-h,
\]
and \(z^h\to z\) strongly in \(L^p(0,T;B)\) whenever
\(z\in L^p(0,T;B)\), \(1\le p<\infty\). Moreover,
\[
\partial_t z^h(t)=\frac{z(t+h)-z(t)}{h}
\]
in the corresponding Bochner space. Applying this to the present weak identity,
and then letting \(h\downarrow0\), gives the time integration by parts in
\eqref{weak-form-integrated}. In later energy estimates, when the test
function depends on the solution itself, one first tests with the regularised
and spatially projected functions \(P_N z^h\in\mathcal X\), and then lets
\(N\to\infty\) and \(h\downarrow0\).

We shall use the following form of the Aubin--Lions compactness lemma.

\begin{theorem}[Aubin--Lions]
\label{AuLi}
Let \(X_0\), \(X\), and \(X_1\) be Banach spaces such that
\[
X_0\subset X\subset X_1,
\]
where \(X_0\) is compactly embedded into \(X\) and \(X\) is continuously
embedded into \(X_1\). For \(1\leq p,q\leq\infty\), set
\[
W_{p,q}
=
\{v\in L^p(0,T;X_0):\partial_t v\in L^q(0,T;X_1)\}.
\]
Then:
\begin{enumerate}[(i)]
\item if \(p<\infty\), the embedding
\[
W_{p,q}\hookrightarrow L^p(0,T;X)
\]
is compact;

\item if \(p=\infty\) and \(q>1\), the embedding
\[
W_{p,q}\hookrightarrow C([0,T];X)
\]
is compact.
\end{enumerate}
\end{theorem}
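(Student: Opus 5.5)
The plan is to follow the classical route of Lions and Simon. Compactness in \(L^p(0,T;X)\) is obtained by combining two ingredients. The first is an interpolation inequality of Ehrling type, which transfers smallness from the weak norm of \(X_1\) to the intermediate norm of \(X\) at the cost of the strong norm of \(X_0\). The second is equicontinuity in time in the weak norm, which follows from the bound on \(\partial_t v\). Since \(W_{p,q}\) is a normed space and the embedding is linear, it suffices to show that a bounded set \(F\subset W_{p,q}\) is relatively compact in \(L^p(0,T;X)\), respectively in \(C([0,T];X)\).

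\textbf{Step 1 (Ehrling's lemma).} I first prove that for every \(\eta>0\) there is \(C_\eta\) with
\[
\|w\|_X\le \eta\|w\|_{X_0}+C_\eta\|w\|_{X_1},\qquad w\in X_0 .
\]
The argument is by contradiction. If it failed, there would be \(w_n\) with \(\|w_n\|_{X_0}=1\), \(\|w_n\|_X>\eta\) and \(\|w_n\|_{X_1}\to0\). Compactness of \(X_0\hookrightarrow X\) gives a subsequence converging in \(X\) to some \(w\) with \(\|w\|_X\ge\eta\). Continuity of \(X\hookrightarrow X_1\) then forces \(w=0\), a contradiction. No reflexivity is needed here.

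\textbf{Step 2 (time translations and compactness in \(X_1\)).} For \(v\in F\), write \(v(t+h)-v(t)=\int_t^{t+h}\partial_tv\,ds\) in \(X_1\). This gives the following bounds, uniformly on \(F\):
\[
\|v(\cdot+h)-v\|_{L^p(0,T-h;X_1)}\le C h^{\theta}\ \ (\theta>0)\quad\text{for } p<\infty,\qquad
\|v(t+h)-v(t)\|_{X_1}\le C h^{1-1/q}\quad\text{for } q>1 .
\]
The first bound holds even when \(q=1\), by Fubini, with a rate of order \(h^{1/p}\) or a modulus that tends to \(0\). In the second bound, the condition \(q>1\) is exactly what yields a uniform Hölder modulus, and hence equicontinuity in \(C([0,T];X_1)\).

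Next, the time averages \(\int_{t_1}^{t_2}v\,ds\) are bounded in \(X_0\), hence relatively compact in \(X\) and in \(X_1\). For case (ii), pointwise values \(v(t)\) are bounded in \(X_0\) because \(F\) is bounded in \(L^\infty(0,T;X_0)\), after choosing the continuous representative. Each \(v(t)\) can be approximated in \(X_1\) by averages \(h^{-1}\int_t^{t+h}v\), which lie in a compact set. Simon's criterion for \(L^p(0,T;X_1)\), or Arzelà--Ascoli for \(C([0,T];X_1)\), then shows that \(F\) is relatively compact in \(L^p(0,T;X_1)\), respectively \(C([0,T];X_1)\).

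\textbf{Step 3 (upgrade to \(X\)).} Take a sequence \(v_n\in F\) that is Cauchy in \(L^p(0,T;X_1)\) (resp. \(C([0,T];X_1)\)). Apply Step 1 pointwise in \(t\) and take norms:
\[
\|v_n-v_k\|_{L^p(X)}\le 2\eta\sup_F\|v\|_{L^p(X_0)}+C_\eta\|v_n-v_k\|_{L^p(X_1)} ,
\]
and analogously with sup norms in case (ii). Letting \(n,k\to\infty\) and then \(\eta\downarrow0\) shows that \((v_n)\) is Cauchy in \(L^p(0,T;X)\), resp. \(C([0,T];X)\). In case (ii) I also need that \(v_n(t)\in X_0\) with \(\|v_n(t)\|_{X_0}\le C\) for every \(t\), not merely almost every \(t\). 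This follows by weak-\(*\)/lower-semicontinuity arguments, or by using the \(X\)-continuity obtained from the same Ehrling estimate applied to differences \(v(t)-v(s)\).

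The main obstacle I expect is Step 2 in the non-reflexive, endpoint setting, namely \(p<\infty\) with \(q=1\). There one cannot use weak compactness in \(X_0\). Instead one has to verify Simon's two conditions directly: relative compactness of the averages in \(X_1\), and uniform smallness of translations in \(L^p(0,T;X_1)\). I would handle this precisely as in Simon's theorem, controlling translations by \(\|\partial_tv\|_{L^1(X_1)}\) through \(\int_0^{T-h}\|v(t+h)-v(t)\|_{X_1}^p\,dt\le h\,\|\partial_tv\|_{L^1(X_1)}\,\sup_t\|v(t+h)-v(t)\|_{X_1}^{p-1}\). The remaining factor is bounded using the embedding \(W^{1,1}(0,T;X_1)\subset C([0,T];X_1)\) together with the \(L^p(X_0)\) bound.
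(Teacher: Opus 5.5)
The paper gives no proof of this statement. It quotes the classical Aubin--Lions lemma in Simon's form, essentially Corollary~4 of J.~Simon, \emph{Compact sets in the space $L^p(0,T;B)$} (1987). The relevant comparison is therefore with that standard proof, and your proposal reproduces it: Ehrling's inequality; uniform smallness of time translations in $X_1$, including the endpoint $q=1$, which you handle exactly as Simon does; relative compactness of the averages $\int_{t_1}^{t_2}v\,ds$; Simon's criterion or Arzel\`a--Ascoli in $X_1$; and the upgrade to $X$. The only organisational difference is this: Simon applies Ehrling to the translations first and then uses his criterion directly in $X$, whereas you first obtain compactness in $X_1$ and then upgrade Cauchy sequences. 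The two are equivalent, and the argument is correct up to the point below.

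One claim should be removed, because it is false for general Banach spaces. In Steps~2 and~3 you assert that $v(t)\in X_0$ with $\|v(t)\|_{X_0}\le C$ for \emph{every} $t$, for the continuous representative. Without reflexivity of $X_0$, or some weak-$*$ structure compatible with the embedding, this fails. Take $X_0=C^1([0,1])$, $X=X_1=C([0,1])$, $t_0\in(0,T)$, and $v(t)(x)=\sqrt{(x-1/2)^2+(t-t_0)^2}$. Then $v$ is bounded in $L^\infty(0,T;X_0)$ and $\|\partial_tv(t)\|_{X_1}\le1$, yet $v(t_0)=|x-1/2|\notin X_0$.

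You never actually need this claim. Your averaging argument already gives pointwise relative compactness in $X_1$ without it. For the upgrade, apply Ehrling to $v(t)-v(s)$ with $t,s$ in a full-measure set. This yields a modulus of continuity in $X$ that is uniform on $F$, so every $v\in F$ lies in $C([0,T];X)$. It also shows that each $v(t)$ lies in the $X$-closure of the $X_0$-ball, which is compact in $X$. Then $\sup_{[0,T]}\|v_n(t)-v_k(t)\|_X$ equals the supremum over the dense full-measure set where both values lie in that ball, so the pointwise Ehrling bound is only needed almost everywhere. Alternatively, apply Arzel\`a--Ascoli directly in $C([0,T];X)$.
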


In the sequel we apply Aubin--Lions with
\[
X_0=H_D^2(M),
\qquad
X=H_D^1(M),
\qquad
X_1=\mathcal X',
\]
where
\[
H_D^1(M):=
\begin{cases}
H_0^1(M), & \partial M\neq\emptyset,\\
H^1(M), & \partial M=\emptyset.
\end{cases}
\]
We apply Aubin--Lions with
\[
X_0=H_D^2(M),\qquad X=H_D^1(M),\qquad X_1=\mathcal X'.
\]
The embedding \(H_D^2(M)\hookrightarrow\hookrightarrow H_D^1(M)\) is compact,
and \(H_D^1(M)\hookrightarrow \mathcal X'\) is continuous because
\(H_D^1(M)\hookrightarrow L^2(M)\) and \(\mathcal X\hookrightarrow L^2(M)\).
Thus boundedness in \(L^2(0,T;H_D^2(M))\) and of the time derivative in
\(L^{q'}(0,T;\mathcal X')\) implies compactness in
\(L^2(0,T;H_D^1(M))\).

We shall also use the standard weak time-continuity fact: if
\[
u\in L^\infty(0,T;L^2(M)),
\qquad
\partial_t u\in L^{p}(0,T;\mathcal X')
\quad\text{for some }p>1,
\]
then, after redefining \(u\) on a set of times of measure zero,
\[
u\in C_w([0,T];L^2(M)).
\]
Thus the initial condition \(u(0)=u_{\mathrm{in}}\) is meaningful in the weak
\(L^2(M)\)-sense.

Whenever an \(L^2\)-energy identity is used below under weaker time
regularity, it is justified by a standard time-regularisation argument using Steklov averages; see
\cite[Chapter~III, Section~1]{LSU}.
If, in addition, one has the stronger estimate
\[
\partial_t u\in L^2(0,T;H^{-2}(M)),
\]
then the Lions--Magenes theorem yields
\[
u\in C([0,T];L^2(M)).
\]

\noindent
We shall also use the following interpolation consequence. If \(q>6\),
\(z_n\to z\) strongly in \(L^2(Q_T)\), and \((z_n)_n\) is bounded in
\(L^q(Q_T)\), then
\[
z_n\to z
\qquad\text{strongly in }L^6(Q_T).
\]
Indeed,
\[
\|z_n-z\|_{L^6(Q_T)}
\le
\|z_n-z\|_{L^2(Q_T)}^{\theta}
\|z_n-z\|_{L^q(Q_T)}^{1-\theta},
\qquad
\theta=\frac{q-6}{3(q-2)}>0.
\]
Consequently, for the standard double-well potential,
\[
W'(z_n)=z_n^3-z_n\to z^3-z=W'(z)
\qquad\text{strongly in }L^2(Q_T).
\]

\section{The smooth-classifier problem: existence, compactness, and
two-dimensional well-posedness}
\label{sec:stability}

In this section we prove existence of weak solutions to
\eqref{eq:active-CH-manifold} for a fixed smooth classifier \(a\) satisfying
Assumption~\ref{ass:aclass}. Throughout the section,
\[
q'=\frac{q}{q-1},
\qquad
\mathcal X=D((I+A)^{s/2}),
\]
where \(s\ge2\) is chosen so that
\[
\mathcal X\hookrightarrow H_D^2(M)\cap L^q(M).
\]

\subsection{The Galerkin problem}
\label{subsec:Galerkin-problem}

Let \(\{e_k\}_{k\geq1}\) be the eigenbasis of the self-adjoint realization
\(A=-\Delta_g\) specified above: the Dirichlet realization when
\(\partial M\neq\emptyset\), and the Laplace--Beltrami realization, including
its constant zero mode, when \(M\) is closed. We set
\[
{\cal V}_m:=\operatorname{span}\{e_1,\ldots,e_m\},
\qquad
P_m:L^2(M)\to{\cal V}_m
\]
for the corresponding spectral projection.

We look for an approximate solution of the form
\begin{equation}
\label{galerkin-app}
u_m(t,x)=\sum_{k=1}^m c_k^m(t)e_k(x).
\end{equation}
Then \(u_m\in{\cal V}_m\), and hence
\[
P_m u_m=u_m.
\]
Thus, at the Galerkin level, the classifier may be written either as
\(a(\Delta_g u_m)\) or as \(a(\Delta_gP_m u_m)\). These two expressions are
identical for \(u_m\in{\cal V}_m\).

If \(\partial M\neq\emptyset\), then
\[
u_m|_{\partial M}=0,
\qquad
\Delta_g u_m|_{\partial M}=0,
\]
because
\[
\Delta_g u_m=-\sum_{k=1}^m\lambda_k c_k^m e_k
\]
and \(e_k|_{\partial M}=0\). Thus the Navier boundary conditions are
satisfied at the Galerkin level.

\begin{proposition}[Galerkin existence and uniform estimates]
\label{prop:Galerkin-existence}
Let $q>6$, and assume that
\[
u_{\mathrm{in}}\in L^2(M),
\qquad
u_{\mathrm{ref}}\in L^q(M).
\]
Assume further that $\lambda$ satisfies \eqref{ass-lambda}, that $a$ satisfies
Assumption~\ref{ass:aclass}, and that $b$ satisfies the standing continuity,
monotonicity, coercivity, and growth assumptions.

Then, for every $m\in\mathbb N$, there exists
\[
u_m\in C^1([0,T];\mathcal V_m)
\]
such that
\[
u_m(0)=P_m u_{\mathrm{in}},
\]
and, for every $\varphi\in\mathcal V_m$ and every $t\in[0,T]$,
\begin{align}
\label{weak-form-galerkin-m}
(\partial_tu_m(t),\varphi)_{L^2(M)}
&=
\int_M
\Bigl(
W'(u_m)
-\mu\Delta_gu_m
-\nu a(\Delta_gu_m)|\nabla_gu_m|_g
\Bigr)
\Delta_g\varphi\,dV_g
\nonumber\\
&\quad+
\int_M
\lambda(x)\bigl(b(u_{\mathrm{ref}})-b(u_m)\bigr)
\varphi\,dV_g.
\end{align}

Moreover, there exists a constant $C_T>0$, independent of $m$, such that
\begin{equation}
\label{galerkin-uniform-estimate}
\begin{aligned}
&
\|u_m\|_{L^\infty(0,T;L^2(M))}^2
+
\|u_m\|_{L^2(0,T;H_D^2(M))}^2
+
\|u_m\|_{L^q(Q_T)}^q
\\
&\qquad\leq
C_T
\Bigl(
1
+\|u_{\mathrm{in}}\|_{L^2(M)}^2
+\|u_{\mathrm{ref}}\|_{L^q(M)}^q
\Bigr).
\end{aligned}
\end{equation}
The constant $C_T$ may depend on $T$, the geometry of $(M,g)$, the
parameters $\mu$ and $\nu$, the bounds on $\lambda$ and $a$, and the
structural constants in the assumptions on $b$, but not on $m$.

Finally, choose $s\geq2$ sufficiently large that
\[
\mathcal X:=D\bigl((I+A)^{s/2}\bigr)
\hookrightarrow H_D^2(M)\cap L^q(M).
\]
Then, with
\(
q'=\frac{q}{q-1},
\)
the family of time derivatives satisfies
\begin{equation}
\label{galerkin-time-derivative-estimate}
\sup_{m\in\mathbb N}
\|\partial_tu_m\|_{L^{q'}(0,T;\mathcal X')}
<\infty.
\end{equation}
\end{proposition}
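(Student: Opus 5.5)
The plan is a standard Faedo--Galerkin argument. First we establish local solvability of the ODE system. Then an a priori estimate obtained by testing with \(u_m\) makes the solution global on \([0,T]\). Finally, duality gives the time-derivative bound.

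\emph{Local solvability.} Writing \(u_m=\sum_k c_k^m e_k\) and testing \eqref{weak-form-galerkin-m} with \(\varphi=e_j\), we obtain \(\dot c^m=F(c^m)\) with \(c^m(0)=((u_{\mathrm{in}},e_k))_k\), where
\[
F_j(c)=\int_M\Bigl(W'(u)-\mu\Delta_gu-\nu a(\Delta_gu)|\nabla_gu|_g\Bigr)\Delta_ge_j\,dV_g+\int_M\lambda\bigl(b(u_{\mathrm{ref}})-b(u)\bigr)e_j\,dV_g .
\]
The eigenfunctions are smooth up to the boundary, hence bounded with bounded derivatives. Consequently \(c\mapsto u,\nabla_gu,\Delta_gu\) are linear maps into \(L^\infty\). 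Since \(a\) and \(b\) are continuous and \(|\nabla_g u|_g\) depends continuously on \(c\), dominated convergence shows that \(F\) is continuous. For the constant term, \(b(u_{\mathrm{ref}})\in L^{q'}\) pairs with \(e_j\in L^\infty\). Peano's theorem then gives a \(C^1\) local solution; we do not claim uniqueness, and the statement only asserts existence. Global existence on \([0,T]\) follows once \(|c^m(t)|=\|u_m(t)\|_{L^2}\) is shown to stay bounded, by the standard continuation argument.

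\emph{Energy estimate.} Test with \(\varphi=u_m\), which is admissible. The passive part gives \(-\mu\|\Delta_gu_m\|^2\). The active part gives \(-\nu\int a(\Delta_gu_m)\Delta_gu_m|\nabla_gu_m|_g\le0\), since \(a(r)r\ge0\). The double-well term gives
\[
\int (u_m^3-u_m)\Delta_gu_m=-\int(3u_m^2-1)|\nabla_gu_m|^2\le\|\nabla_gu_m\|^2 .
\]
The boundary term vanishes because \(u_m=0\) on \(\partial M\); in the closed case there is no boundary. Next, \(\|\nabla_g u_m\|^2=-(u_m,\Delta_gu_m)\le \tfrac{\mu}{4}\|\Delta_gu_m\|^2+C\|u_m\|^2\). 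The anchoring term is bounded by
\[
-\lambda_*c_0\|u_m\|_{L^q}^q+\lambda^*c_1|M|+\lambda^*\|b(u_{\mathrm{ref}})\|_{L^{q'}}\|u_m\|_{L^q}.
\]
By Young's inequality the last piece is at most \(\tfrac{\lambda_*c_0}{2}\|u_m\|_{L^q}^q+C\|b(u_{\mathrm{ref}})\|_{L^{q'}}^{q'}\), and \(\|b(u_{\mathrm{ref}})\|_{L^{q'}}^{q'}\le C(1+\|u_{\mathrm{ref}}\|_{L^q}^q)\) by the growth bound. Collecting terms,
\[
\tfrac12\tfrac{d}{dt}\|u_m\|^2+\tfrac{\mu}{2}\|\Delta_gu_m\|^2+\tfrac{\lambda_*c_0}{2}\|u_m\|_{L^q}^q\le C\|u_m\|^2+C(1+\|u_{\mathrm{ref}}\|_{L^q}^q).
\]
Gronwall's lemma, together with \(\|P_mu_{\mathrm{in}}\|\le\|u_{\mathrm{in}}\|\), then yields \eqref{galerkin-uniform-estimate}. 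The \(H^2\)-norm is recovered from the elliptic estimate \eqref{elliptic-estimate}. This also closes the continuation argument.

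\emph{Time derivative.} This is the main obstacle. The issue is that \(\partial_tu_m\) is only tested against \(\mathcal V_m\), so for \(\varphi\in\mathcal X\) we must use \(P_m\varphi\). We need \(P_m\) to be uniformly bounded on \(\mathcal X\). Since \(P_m\) is a spectral projection of \(A\), it commutes with \((I+A)^{s/2}\), so \(\|P_m\varphi\|_{\mathcal X}\le\|\varphi\|_{\mathcal X}\). Hence
\[
\langle\partial_tu_m,\varphi\rangle=(\partial_tu_m,P_m\varphi)
\]
is bounded by
\[
\Bigl(\|W'(u_m)\|_{L^2}+\mu\|\Delta_gu_m\|_{L^2}+\nu M_a\|\nabla_gu_m\|_{L^2}\Bigr)\|\varphi\|_{\mathcal X}+\lambda^*\Bigl(\|b(u_{\mathrm{ref}})\|_{L^{q'}}+\|b(u_m)\|_{L^{q'}}\Bigr)\|\varphi\|_{\mathcal X},
\]
using \(\mathcal X\hookrightarrow H_D^2\cap L^q\). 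The first bracket is bounded in \(L^2(0,T)\): here \(\|W'(u_m)\|_{L^2}\le C(\|u_m\|_{L^6}^3+\|u_m\|_{L^2})\), which is in \(L^2(0,T)\) because \(u_m\) is bounded in \(L^q(Q_T)\subset L^6(Q_T)\). The second bracket is bounded in \(L^{q'}(0,T)\), since \(|b(r)|^{q'}\le C(1+|r|^q)\). Because \(q'<2\), both lie in \(L^{q'}(0,T)\), which gives \eqref{galerkin-time-derivative-estimate}.
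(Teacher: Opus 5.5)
Your proposal is correct and follows the paper's proof step for step: Peano's theorem with continuation, then testing with $u_m$ using $a(r)r\ge0$, the coercivity of $b$ with Young's inequality, Gronwall and the elliptic estimate, and finally duality through $P_m\varphi$ with the uniform $\mathcal X$-boundedness of the spectral projection. The differences are minor. You bound $W'(u_m)$ directly in $L^2(Q_T)$ via $\|u_m\|_{L^6(Q_T)}^3$, which is slightly sharper than the paper's $L^{q'}(0,T;L^2(M))$ bound obtained from $3q'\le q$. Also, ``smooth up to the boundary'' overstates what a $C^{1,1}$ boundary gives, although the boundedness of $e_k$, $\nabla_g e_k$ and $\Delta_g e_k=-\lambda_k e_k$, which is all you actually use, still holds.
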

\begin{proof}
We divide the proof into three steps.

\medskip
\noindent
\textbf{Step 1: finite-dimensional system.}

We seek an approximate solution of the form
\eqref{galerkin-app}. The coefficient vector
\[
c^m(t):=(c_1^m(t),\ldots,c_m^m(t))\in\mathbb R^m
\]
is required to satisfy
\begin{align}
\label{galerkin-m-system}
(\partial_tu_m,e_j)_{L^2(M)}
&=
\int_M
\Bigl(
W'(u_m)
-\mu\Delta_gu_m
-\nu a(\Delta_gu_m)|\nabla_gu_m|_g
\Bigr)
\Delta_ge_j\,dV_g
\nonumber\\
&\quad+
\int_M
\lambda(x)
\bigl(
b(u_{\mathrm{ref}})-b(u_m)
\bigr)e_j\,dV_g,
\qquad j=1,\ldots,m,
\end{align}
with initial condition
\[
u_m(0)=P_mu_{\mathrm{in}}.
\]

Equivalently, \eqref{galerkin-m-system} is an autonomous system of ordinary
differential equations
\[
\frac{d}{dt}c^m(t)=F_m(c^m(t)),
\qquad
c^m(0)=c_{\mathrm{in}}^m,
\]
for a vector field \(F_m:\mathbb R^m\to\mathbb R^m\).

We claim that \(F_m\) is continuous. Indeed, the maps
\[
c\longmapsto u_c:=\sum_{k=1}^m c_ke_k,
\qquad
c\longmapsto \nabla_gu_c,
\qquad
c\longmapsto \Delta_gu_c
\]
are linear and continuous from \(\mathbb R^m\) into every finite-dimensional
spectral norm. The continuity of \(W'\), \(a\), and \(b\), together with the
continuity of the norm map
\[
X\longmapsto |X|_g,
\]
therefore implies the continuity of all nonlinear terms in
\eqref{galerkin-m-system}. Moreover,
\[
b(u_{\mathrm{ref}})\in L^{q'}(M),
\qquad
e_j\in L^q(M),
\]
so the prescribed reference-state term is well defined.

Peano's existence theorem consequently yields a local solution
\[
c^m\in C^1([0,T_m);\mathbb R^m)
\]
for some \(T_m>0\), and hence
\[
u_m\in C^1([0,T_m);\mathcal V_m).
\]
The a priori estimate established below bounds
\(\|u_m(t)\|_{L^2(M)}\). Since the Galerkin basis is orthonormal in
\(L^2(M)\),
\[
|c^m(t)|_{\mathbb R^m}^2
=
\|u_m(t)\|_{L^2(M)}^2.
\]
Thus the coefficient vector cannot become unbounded in finite time. If the
maximal interval of existence had a finite endpoint, the boundedness of
\(c^m\) and the continuity of \(F_m\) would allow the solution to be
continued beyond that endpoint. Hence the Galerkin solution exists on the
whole interval \([0,T]\).

\medskip
\noindent
\medskip
\noindent
\textbf{Step 2: basic energy estimate.}

We test \eqref{galerkin-m-system} by $u_m$, equivalently multiply the
$j$-th equation by $c_j^m$ and sum over $j=1,\ldots,m$. Since
$u_m\in\mathcal V_m$, this gives
\begin{align}
\label{energy-m-start}
\frac12\frac{d}{dt}\|u_m\|_{L^2(M)}^2
&=
\int_M W'(u_m)\Delta_gu_m\,dV_g
-\mu\|\Delta_gu_m\|_{L^2(M)}^2
\nonumber\\
&\quad
-\nu\int_M
a(\Delta_gu_m)|\nabla_gu_m|_g\Delta_gu_m\,dV_g
\nonumber\\
&\quad+
\int_M
\lambda(x)\bigl(b(u_{\mathrm{ref}})-b(u_m)\bigr)u_m\,dV_g.
\end{align}

For the standard double-well potential,
\[
W'(r)=r^3-r,
\qquad
W''(r)=3r^2-1\geq-1.
\]
If $M$ is closed, integration by parts produces no boundary term. If
$\partial M\neq\varnothing$, then $u_m=0$ on $\partial M$ and
$W'(0)=0$, so the boundary term again vanishes. Hence
\[
\begin{aligned}
\int_M W'(u_m)\Delta_gu_m\,dV_g
&=
-\int_M W''(u_m)|\nabla_gu_m|_g^2\,dV_g
\\
&\leq
\|\nabla_gu_m\|_{L^2(M)}^2.
\end{aligned}
\]
Using the interpolation estimate
\[
\|\nabla_gz\|_{L^2(M)}^2
\leq
\varepsilon\|\Delta_gz\|_{L^2(M)}^2
+
C_\varepsilon\|z\|_{L^2(M)}^2,
\qquad
z\in H_D^2(M),
\]
we obtain
\begin{equation}
\label{doublewell-m-bound}
\int_M W'(u_m)\Delta_gu_m\,dV_g
\leq
\varepsilon\|\Delta_gu_m\|_{L^2(M)}^2
+
C_\varepsilon\|u_m\|_{L^2(M)}^2.
\end{equation}

The active contribution has a favourable sign. Indeed, since $a$ is
non-decreasing and $a(0)=0$,
\[
a(r)r\geq0
\qquad\text{for every }r\in\mathbb R.
\]
Since $|\nabla_gu_m|_g\geq0$, it follows that
\begin{equation}
\label{classifier-m-sign}
-\nu\int_M
a(\Delta_gu_m)|\nabla_gu_m|_g\Delta_gu_m\,dV_g
\leq0.
\end{equation}

We next estimate the anchoring term. By the coercivity assumption on $b$,
\[
b(r)r\geq c_0|r|^q-c_1,
\qquad r\in\mathbb R,
\]
and the bounds on $\lambda$, we have
\begin{align}
-\int_M\lambda(x)b(u_m)u_m\,dV_g
&\leq
-\lambda_*c_0\|u_m\|_{L^q(M)}^q
+
\lambda^*c_1\operatorname{Vol}_g(M).
\label{anchoring-coercive-part}
\end{align}
For the prescribed reference contribution, H\"older's and Young's inequalities
give
\begin{align}
\int_M\lambda(x)b(u_{\mathrm{ref}})u_m\,dV_g
&\leq
\lambda^*
\|b(u_{\mathrm{ref}})\|_{L^{q'}(M)}
\|u_m\|_{L^q(M)}
\nonumber\\
&\leq
\frac{\lambda_*c_0}{2}
\|u_m\|_{L^q(M)}^q
+
C
\|b(u_{\mathrm{ref}})\|_{L^{q'}(M)}^{q'}.
\label{anchoring-reference-part}
\end{align}
The growth assumption
\[
|b(r)|\leq c_2\bigl(1+|r|^{q-1}\bigr)
\]
implies
\[
|b(r)|^{q'}
\leq
C\bigl(1+|r|^q\bigr),
\qquad
q'=\frac{q}{q-1},
\]
and therefore
\[
\|b(u_{\mathrm{ref}})\|_{L^{q'}(M)}^{q'}
\leq
C\bigl(
1+\|u_{\mathrm{ref}}\|_{L^q(M)}^q
\bigr).
\]
Combining \eqref{anchoring-coercive-part} and
\eqref{anchoring-reference-part}, we obtain
\begin{equation}
\label{anchoring-m-bound}
\begin{aligned}
&\int_M
\lambda(x)\bigl(b(u_{\mathrm{ref}})-b(u_m)\bigr)u_m\,dV_g
\\
&\qquad\leq
-\frac{\lambda_*c_0}{2}
\|u_m\|_{L^q(M)}^q
+
C\bigl(
1+\|u_{\mathrm{ref}}\|_{L^q(M)}^q
\bigr).
\end{aligned}
\end{equation}

Choosing $\varepsilon=\mu/2$ in \eqref{doublewell-m-bound}, and using
\eqref{classifier-m-sign} and \eqref{anchoring-m-bound} in
\eqref{energy-m-start}, we find
\begin{equation}
\label{energy-m-final}
\begin{aligned}
\frac{d}{dt}\|u_m\|_{L^2(M)}^2
&+
\mu\|\Delta_gu_m\|_{L^2(M)}^2
+
\lambda_*c_0\|u_m\|_{L^q(M)}^q
\\
&\leq
C\|u_m\|_{L^2(M)}^2
+
C\bigl(
1+\|u_{\mathrm{ref}}\|_{L^q(M)}^q
\bigr).
\end{aligned}
\end{equation}

Gronwall's inequality, together with
\[
\|P_mu_{\mathrm{in}}\|_{L^2(M)}
\leq
\|u_{\mathrm{in}}\|_{L^2(M)},
\]
now yields
\begin{equation}
\label{uniform-L2-H2-Lq-m}
\begin{aligned}
&
\|u_m\|_{L^\infty(0,T;L^2(M))}^2
+
\|\Delta_gu_m\|_{L^2(Q_T)}^2
+
\|u_m\|_{L^q(Q_T)}^q
\\
&\qquad\leq
C_T
\Bigl(
1
+\|u_{\mathrm{in}}\|_{L^2(M)}^2
+\|u_{\mathrm{ref}}\|_{L^q(M)}^q
\Bigr).
\end{aligned}
\end{equation}
Finally, the elliptic estimate
\[
\|z\|_{H^2(M)}
\leq
C\bigl(
\|\Delta_gz\|_{L^2(M)}
+
\|z\|_{L^2(M)}
\bigr),
\qquad z\in H_D^2(M),
\]
gives
\[
\|u_m\|_{L^2(0,T;H_D^2(M))}^2
\leq
C_T
\Bigl(
1
+\|u_{\mathrm{in}}\|_{L^2(M)}^2
+\|u_{\mathrm{ref}}\|_{L^q(M)}^q
\Bigr).
\]
All constants are independent of $m$. In particular, the coefficient vector
$c^m(t)$ remains bounded on every finite time interval, so the local
Galerkin solution extends to the whole interval $[0,T]$.

\medskip
\noindent
\medskip
\noindent
\textbf{Step 3: estimate of the time derivative.}

Let
\[
\mathcal X:=D\bigl((I+A)^{s/2}\bigr),
\]
where $s\geq2$ is chosen so that
\[
\mathcal X\hookrightarrow H_D^2(M)\cap L^q(M).
\]
For every $\varphi\in\mathcal X$, since
$\partial_tu_m\in\mathcal V_m$, we have
\[
\langle\partial_tu_m,\varphi\rangle_{\mathcal X',\mathcal X}
=
(\partial_tu_m,P_m\varphi)_{L^2(M)}.
\]
Using the Galerkin equation with the test function $P_m\varphi$, we obtain
\begin{align}
\bigl|
\langle\partial_tu_m,\varphi\rangle_{\mathcal X',\mathcal X}
\bigr|
&\leq
\Bigl(
\|W'(u_m)\|_{L^2(M)}
+
\mu\|\Delta_gu_m\|_{L^2(M)}
\Bigr)
\|\Delta_gP_m\varphi\|_{L^2(M)}
\nonumber\\
&\quad+
\nu\|a\|_{L^\infty(\mathbb R)}
\|\nabla_gu_m\|_{L^2(M)}
\|\Delta_gP_m\varphi\|_{L^2(M)}
\nonumber\\
&\quad+
\lambda^*
\Bigl(
\|b(u_{\mathrm{ref}})\|_{L^{q'}(M)}
+
\|b(u_m)\|_{L^{q'}(M)}
\Bigr)
\|P_m\varphi\|_{L^q(M)}.
\label{time-derivative-pointwise-estimate}
\end{align}
Because $P_m$ is a spectral projection, it commutes with $A$ and is
uniformly bounded on $\mathcal X$. Hence
\[
\|\Delta_gP_m\varphi\|_{L^2(M)}
+
\|P_m\varphi\|_{L^q(M)}
\leq
C\|\varphi\|_{\mathcal X},
\]
with a constant independent of $m$. It follows that
\begin{align}
\|\partial_tu_m\|_{\mathcal X'}
&\leq
C\Bigl(
\|W'(u_m)\|_{L^2(M)}
+
\|\Delta_gu_m\|_{L^2(M)}
+
\|\nabla_gu_m\|_{L^2(M)}
\nonumber\\
&\qquad\qquad
+
\|b(u_{\mathrm{ref}})\|_{L^{q'}(M)}
+
\|b(u_m)\|_{L^{q'}(M)}
\Bigr).
\label{time-derivative-Xdual-estimate}
\end{align}

We now show that every term on the right-hand side is bounded in
$L^{q'}(0,T)$, uniformly in $m$.

For the standard double-well potential,
\[
W'(u_m)=u_m^3-u_m,
\]
and therefore
\[
\|W'(u_m)\|_{L^2(M)}
\leq
\|u_m\|_{L^6(M)}^3
+
\|u_m\|_{L^2(M)}.
\]
Since $q>6$ and $M$ has finite volume,
\[
\|u_m\|_{L^6(M)}
\leq
C\|u_m\|_{L^q(M)}.
\]
Moreover,
\[
3q'\leq q,
\qquad
q'=\frac{q}{q-1}.
\]
Consequently,
\begin{align*}
\int_0^T
\|u_m(t)\|_{L^6(M)}^{3q'}\,dt
&\leq
C\int_0^T
\|u_m(t)\|_{L^q(M)}^{3q'}\,dt
\\
&\leq
C_T\left(
1+\|u_m\|_{L^q(Q_T)}^q
\right).
\end{align*}
Together with the uniform
$L^\infty(0,T;L^2(M))$-bound, this shows that
\[
\bigl(W'(u_m)\bigr)_m
\quad\text{is bounded in}\quad
L^{q'}(0,T;L^2(M)).
\]

The families $(\Delta_gu_m)_m$ and $(\nabla_gu_m)_m$ are bounded in
$L^2(Q_T)$. Since $q'<2$ and $T<\infty$, they are therefore also bounded
in
\[
L^{q'}(0,T;L^2(M)).
\]

For the anchoring law, the growth assumption
\[
|b(r)|
\leq
c_2\bigl(1+|r|^{q-1}\bigr)
\]
implies
\[
|b(r)|^{q'}
\leq
C\bigl(1+|r|^q\bigr).
\]
Hence
\[
\|b(u_{\mathrm{ref}})\|_{L^{q'}(M)}^{q'}
\leq
C\Bigl(
1+\|u_{\mathrm{ref}}\|_{L^q(M)}^q
\Bigr),
\]
and
\[
\|b(u_m)\|_{L^{q'}(Q_T)}^{q'}
\leq
C\Bigl(
1+\|u_m\|_{L^q(Q_T)}^q
\Bigr).
\]
The uniform estimate \eqref{uniform-L2-H2-Lq-m} therefore shows that
\[
\bigl(b(u_m)\bigr)_m
\quad\text{is bounded in}\quad
L^{q'}(Q_T).
\]

Combining these bounds with
\eqref{time-derivative-Xdual-estimate}, we conclude that
\[
\sup_{m\in\mathbb N}
\|\partial_tu_m\|_{L^{q'}(0,T;\mathcal X')}
<\infty.
\]
This proves \eqref{galerkin-time-derivative-estimate} and completes the
proof of the proposition. \end{proof}

\subsection{Passage to the limit in the Galerkin approximation}
\label{subsec:passage-m-infty}

In this subsection we let $m\to\infty$ in the Galerkin problem constructed
above. Since $u_m\in\mathcal V_m$, one has $P_mu_m=u_m$. Thus, at the
Galerkin level,
\[
a(\Delta_gu_m)=a(\Delta_gP_mu_m).
\]
The main point is to prove the strong convergence of $\Delta_gu_m$, which
allows us to identify the active term in the limit. We begin by deriving the
preliminary limit equation and establishing the strong time trace of its
solution.

\begin{lemma}[Time trace for the preliminary limit equation]
\label{lem:trace-preliminary-flux}
Let $q>6$, and assume that
\[
u_{\mathrm{in}}\in L^2(M),
\qquad
u_{\mathrm{ref}}\in L^q(M).
\]
Assume the standing hypotheses on $\lambda$, $a$, and $b$, and let $(u_m)$
be the Galerkin solutions constructed in
Proposition~\ref{prop:Galerkin-existence}, so that
\[
u_m(0)=P_mu_{\mathrm{in}}.
\]
Suppose that, after extraction of a subsequence,
\[
u_m\rightharpoonup u
\quad\text{weakly in }L^2(0,T;H_D^2(M)),
\]
\[
u_m\to u
\quad\text{strongly in }L^2(0,T;H_D^1(M))
\quad\text{and a.e. in }Q_T,
\]
and
\[
u_m\rightharpoonup^\ast u
\quad\text{weakly-* in }L^\infty(0,T;L^2(M)).
\]
Assume moreover that
\[
u_m\to u
\quad\text{strongly in }L^r(Q_T)
\quad\text{for every }1\leq r<q,
\]
and that $(u_m)$ is bounded in $L^q(Q_T)$.
Define
\[
\chi_m
:=
a(\Delta_gu_m)|\nabla_gu_m|_g.
\]
Then $(\chi_m)$ is bounded in $L^2(Q_T)$. Consequently, after passing to a
further subsequence,
\[
\chi_m\rightharpoonup\chi
\quad\text{weakly in }L^2(Q_T)
\]
for some $\chi\in L^2(Q_T)$.
Set
\[
\mathfrak m_\chi
:=
W'(u)-\mu\Delta_gu-\nu\chi.
\]
Then
\(
\mathfrak m_\chi\in L^2(Q_T),
\)
and $u$ satisfies the preliminary equation
\begin{equation}
\label{eq:preliminary-limit-equation}
\partial_tu
=
\Delta_g\mathfrak m_\chi
+
\lambda(x)\bigl(b(u_{\mathrm{ref}})-b(u)\bigr)
\end{equation}
in the sense of distributions on $(0,T)\times M$. 
More precisely, for every
\(
\varphi\in C^1([0,T];\mathcal X) \),
\(
\varphi(T)=0,
\)
one has
\begin{align}
\label{eq:preliminary-limit-formulation}
&-\int_0^T
(u,\partial_t\varphi)_{L^2(M)}\,dt
-
(u_{\mathrm{in}},\varphi(0))_{L^2(M)}
\nonumber\\
&\qquad=
\int_0^T\int_M
\mathfrak m_\chi\,\Delta_g\varphi\,dV_g\,dt
+
\int_0^T\int_M
\lambda(x)
\bigl(b(u_{\mathrm{ref}})-b(u)\bigr)
\varphi\,dV_g\,dt.
\end{align}
Consequently, $u$ admits a representative satisfying
\[
u\in C([0,T];L^2(M)),
\qquad
u(0)=u_{\mathrm{in}}
\quad\text{strongly in }L^2(M).
\]
\end{lemma}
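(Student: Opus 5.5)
The proof has three parts: pass to the limit in the Galerkin identity tested against time-dependent test functions, then derive the strong time trace from the regularity of the limit equation.

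\textbf{Boundedness of the active product and passage to the limit.} Since $|a|\le M_a$, we have $|\chi_m|\le M_a|\nabla_gu_m|_g$ pointwise. The bound \eqref{galerkin-uniform-estimate}, together with $H_D^2(M)\hookrightarrow H^1(M)$, then bounds $\chi_m$ in $L^2(Q_T)$, and a weakly convergent subsequence $\chi_m\rightharpoonup\chi$ exists.

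Fix $\varphi\in C^1([0,T];\mathcal X)$ with $\varphi(T)=0$. Test \eqref{weak-form-galerkin-m} with $P_m\varphi(t)\in\mathcal V_m$, integrate over $(0,T)$, and integrate by parts in time. This gives the Galerkin analogue of \eqref{eq:preliminary-limit-formulation}, with $u_m$, $P_mu_{\mathrm{in}}$, $\chi_m$ and $P_m\varphi$ in place of the limit objects. Since $P_m$ is a spectral projection, it commutes with $A$ and $\partial_t$ and is uniformly bounded on $\mathcal X$. Hence
\[
P_m\varphi\to\varphi,\quad \Delta_gP_m\varphi\to\Delta_g\varphi \quad\text{in } C([0,T];L^2(M)),\qquad P_m\varphi\to\varphi \quad\text{in } C([0,T];L^q(M)),
\]
and the same holds for $\partial_t P_m\varphi$. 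Each term now converges as a weak--strong product:
\begin{itemize}
\item $u_m\rightharpoonup^* u$ handles the time-derivative term;
\item $P_mu_{\mathrm{in}}\to u_{\mathrm{in}}$ in $L^2(M)$ handles the initial term;
\item the interpolation remark preceding Section~\ref{sec:stability} gives $W'(u_m)\to W'(u)$ strongly in $L^2(Q_T)$;
\item $\Delta_gu_m$ and $\chi_m$ converge weakly in $L^2(Q_T)$.
\end{itemize}
For the anchoring term, $b(u_m)\to b(u)$ a.e. by continuity of $b$, and $b(u_m)$ is bounded in $L^{q'}(Q_T)$. Vitali's theorem, or weak compactness plus a.e. identification, gives $b(u_m)\rightharpoonup b(u)$ in $L^{q'}(Q_T)$, which pairs against $P_m\varphi\to\varphi$ strongly in $L^q(Q_T)$. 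This proves \eqref{eq:preliminary-limit-formulation}. Since $u\in L^6(Q_T)$, we get $\mathfrak m_\chi\in L^2(Q_T)$.

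\textbf{Strong time trace.} Continuity $u\in C([0,T];L^2(M))$ does not follow from a Lions--Magenes pairing directly, because the anchoring part lies only in $L^{q'}(0,T;L^q)'$ rather than in $L^2(0,T;H^{-2})$. I would use a Steklov-regularised energy identity instead. Formally testing \eqref{eq:preliminary-limit-equation} with $u$ gives
\[
\frac{d}{dt}\tfrac12\|u\|_{L^2}^2=\int_M\mathfrak m_\chi\Delta_gu\,dV_g+\int_M\lambda\bigl(b(u_{\mathrm{ref}})-b(u)\bigr)u\,dV_g .
\]
The right-hand side lies in $L^1(0,T)$: the first integrand is a product of two $L^2(Q_T)$ functions, and the second is a pairing of $L^{q'}$ with $L^q$.

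To justify this, test with $P_N u^h$, where $u^h$ is the Steklov average, and pass $N\to\infty$, then $h\downarrow0$, as described after Definition~\ref{def:weak-solution}. This is the step I expect to be the main obstacle: one must check that $P_N u^h\in\mathcal X$ and that every term converges in the dual pairing $\mathcal Y'\times\mathcal Y$, using $u\in L^2(0,T;H_D^2)\cap L^q(Q_T)$. The outcome is that $t\mapsto\|u(t)\|_{L^2}^2$ is absolutely continuous.

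Combined with $u\in C_w([0,T];L^2(M))$, which follows from $u\in L^\infty L^2$ and $\partial_tu\in L^{q'}(0,T;\mathcal X')$, continuity of the norm together with weak continuity gives strong continuity in $L^2(M)$.

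Finally, choosing $\varphi(t)=\theta(t)\psi$ with $\theta(0)=1$ and comparing with the integrated identity shows $u(0)=u_{\mathrm{in}}$ weakly. The energy identity gives $\|u(t)\|_{L^2}\to\|u_{\mathrm{in}}\|_{L^2}$ as $t\downarrow0$, so the initial value is attained strongly in $L^2(M)$.
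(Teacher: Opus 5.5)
Your proposal is correct and follows the paper's own proof: you pass to the limit in the Galerkin identity tested with $P_m\varphi$ using the same weak--strong pairings, then obtain the continuous representative and $u(0)=u_{\mathrm{in}}$ from the Steklov-regularised chain rule on $L^2(0,T;H_D^2(M))\cap L^q(Q_T)$, with $\partial_tu\in L^2(0,T;H_D^2(M)')+L^{q'}(Q_T)$. One caveat on the step you flag: $P_Nu^h\to u^h$ in $L^q$ is not automatic when $H_D^2(M)\not\hookrightarrow L^q(M)$, since spectral projections need not be uniformly bounded on $L^q$, so it is cleaner to extend the identity from $\mathcal X$ to $\mathcal Y$ by density (e.g.\ via the heat semigroup, as the paper does elsewhere) and test the time-averaged equation with $u^h$ directly.
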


\begin{proof}
The boundedness of $a$ gives
\[
\|\chi_m\|_{L^2(Q_T)}
\leq
\|a\|_{L^\infty(\mathbb R)}
\|\nabla_gu_m\|_{L^2(Q_T)}.
\]
The right-hand side is uniformly bounded by the Galerkin estimates.
Therefore, after extraction,
\[
\chi_m\rightharpoonup\chi
\quad\text{weakly in }L^2(Q_T).
\]

Since $q>6$, the assumed strong convergence in $L^r(Q_T)$ for every $r<q$
gives, in particular,
\[
u_m\to u
\quad\text{strongly in }L^6(Q_T).
\]
Hence, for the standard double-well potential,
\[
W'(u_m)=u_m^3-u_m
\longrightarrow
u^3-u=W'(u)
\quad\text{strongly in }L^2(Q_T).
\]
Moreover,
\[
\Delta_gu_m\rightharpoonup\Delta_gu
\quad\text{weakly in }L^2(Q_T),
\]
and, by the definition of $\chi$,
\[
\chi_m\rightharpoonup\chi
\quad\text{weakly in }L^2(Q_T).
\]

We next identify the limit of the anchoring term. Since $u_m\to u$ a.e. in
$Q_T$ and $b$ is continuous,
\[
b(u_m)\to b(u)
\quad\text{a.e. in }Q_T.
\]
The growth assumption
\[
|b(r)|
\leq
c_2\bigl(1+|r|^{q-1}\bigr)
\]
implies
\[
|b(r)|^{q'}
\leq
C\bigl(1+|r|^q\bigr),
\qquad
q'=\frac{q}{q-1}.
\]
Since $(u_m)$ is bounded in $L^q(Q_T)$, it follows that
$(b(u_m))$ is bounded in $L^{q'}(Q_T)$. Therefore,
\[
b(u_m)\rightharpoonup b(u)
\quad\text{weakly in }L^{q'}(Q_T).
\]
Indeed, every weakly convergent subsequence has limit $b(u)$ by the
almost-everywhere convergence, and hence the entire sequence converges weakly
to $b(u)$.
Let
\(
\varphi\in C^1([0,T];\mathcal X)\) ,
\(
\varphi(T)=0.
\)
We test the Galerkin equation with $P_m\varphi$ and integrate in time.
Because $P_m$ is the spectral projection,
\[
P_m\varphi\to\varphi
\quad\text{strongly in }C^1([0,T];\mathcal X).
\]
In particular,
\[
\Delta_gP_m\varphi\to\Delta_g\varphi
\quad\text{strongly in }C([0,T];L^2(M)),
\]
and
\[
P_m\varphi\to\varphi
\quad\text{strongly in }C([0,T];L^q(M)).
\]
Furthermore,
\[
P_mu_{\mathrm{in}}\to u_{\mathrm{in}}
\quad\text{strongly in }L^2(M).
\]
The preceding convergences therefore permit passage to the limit in every
term of the integrated Galerkin equation and yield
\eqref{eq:preliminary-limit-formulation}. Equivalently, $u$ satisfies
\eqref{eq:preliminary-limit-equation}.

We now establish the time regularity. Set
\[
V_1:=H_D^2(M),
\qquad
V_2:=L^q(M),
\qquad
H:=L^2(M).
\]
Then
\[
V_1\cap V_2\hookrightarrow H
\]
densely and continuously. We have
\[
u\in L^2(0,T;V_1)\cap L^q(0,T;V_2).
\]
Moreover,
\[
\mathfrak m_\chi
=
W'(u)-\mu\Delta_gu-\nu\chi
\in L^2(Q_T),
\]
and therefore
\[
\Delta_g\mathfrak m_\chi
\in L^2(0,T;V_1').
\]
Indeed, for every $\psi\in V_1$,
\[
\left|
\left\langle
\Delta_g\mathfrak m_\chi,\psi
\right\rangle_{V_1',V_1}
\right|
=
\left|
\int_M
\mathfrak m_\chi\Delta_g\psi\,dV_g
\right|
\leq
\|\mathfrak m_\chi\|_{L^2(M)}
\|\Delta_g\psi\|_{L^2(M)}.
\]
On the other hand, since
\[
u_{\mathrm{ref}}\in L^q(M),
\qquad
u\in L^q(Q_T),
\]
the growth condition on $b$ gives
\[
b(u_{\mathrm{ref}})\in L^{q'}(M),
\qquad
b(u)\in L^{q'}(Q_T).
\]
Thus
\[
f_\chi
:=
\lambda(x)\bigl(b(u_{\mathrm{ref}})-b(u)\bigr)
\in L^{q'}(0,T;V_2').
\]
Consequently,
\[
\partial_tu
=
\Delta_g\mathfrak m_\chi+f_\chi
\in
L^2(0,T;V_1')
+
L^{q'}(0,T;V_2').
\]

The standard chain rule for this intersection evolution space, obtained by
Steklov regularisation in time, implies that $u$ has a representative in
$C([0,T];H)$ and that, for all $0\leq s\leq t\leq T$,
\begin{align}
\label{eq:preliminary-energy-identity}
\frac12\|u(t)\|_{L^2(M)}^2
-
\frac12\|u(s)\|_{L^2(M)}^2
&=
\int_s^t\int_M
\mathfrak m_\chi\,\Delta_gu\,dV_g\,d\tau
\nonumber\\
&\quad+
\int_s^t\int_M
\lambda(x)
\bigl(b(u_{\mathrm{ref}})-b(u)\bigr)
u\,dV_g\,d\tau.
\end{align}
The two integrals on the right-hand side are finite. Indeed,
\[
\left|
\int_M
\mathfrak m_\chi\Delta_gu\,dV_g
\right|
\leq
\|\mathfrak m_\chi\|_{L^2(M)}
\|\Delta_gu\|_{L^2(M)}
\in L^1(0,T),
\]
while
\begin{align*}
\left|
\int_M
\lambda(x)
\bigl(b(u_{\mathrm{ref}})-b(u)\bigr)u\,dV_g
\right|
&\leq
\|\lambda\|_{L^\infty(M)}
\Bigl(
\|b(u_{\mathrm{ref}})\|_{L^{q'}(M)}
\|u\|_{L^q(M)}
\\
&\qquad\qquad
+
\|b(u)\|_{L^{q'}(M)}
\|u\|_{L^q(M)}
\Bigr),
\end{align*}
and the latter expression belongs to $L^1(0,T)$.

It remains to identify the initial value. The distributional formulation
\eqref{eq:preliminary-limit-formulation} contains the initial datum
$u_{\mathrm{in}}$. On the other hand, integration by parts in time for the
strongly continuous representative gives the same formula with $u(0)$ in
place of $u_{\mathrm{in}}$. Therefore,
\[
(u(0)-u_{\mathrm{in}},\varphi(0))_{L^2(M)}=0
\]
for every admissible test function $\varphi$. Since $\varphi(0)$ is arbitrary
in the dense space $\mathcal X\subset L^2(M)$, we conclude that
\[
u(0)=u_{\mathrm{in}}
\quad\text{strongly in }L^2(M).
\]
The proof is complete.
\end{proof}

We shall need a simultaneous time--space regularization of the limiting
function that is compatible with the comparison argument used below.

\begin{lemma}[Smooth approximation adapted to the comparison argument]
\label{lem:smooth-approximation}
Let $q>6$, let
\[
q'=\frac{q}{q-1},
\]
and set
\[
\mathcal E_T
:=
L^2(0,T;H_D^2(M))
\cap
L^q(Q_T),
\]
and
\[
\mathcal E_T'
:=
L^2(0,T;H_D^2(M)')
+
L^{q'}(Q_T),
\]
endowed with the usual sum norm
\[
\|F\|_{\mathcal E_T'}
:=
\inf_{F=f+g}
\left(
\|f\|_{L^2(0,T;H_D^2(M)')}
+
\|g\|_{L^{q'}(Q_T)}
\right).
\]

Assume that
\[
u\in\mathcal E_T,
\qquad
\partial_tu\in\mathcal E_T',
\]
and that
\[
u\in C([0,T];L^2(M)),
\qquad
u(0)=u_{\mathrm{in}}
\quad\text{in }L^2(M),
\]
for some $u_{\mathrm{in}}\in L^2(M)$.

Then there exists a family $(u^\zeta)_{\zeta>0}$ such that
\[
u^\zeta\in C^\infty([0,T];D(A^k))
\qquad\text{for every }k\in\mathbb N,
\]
and, in particular,
\[
u^\zeta\in C^1([0,T];\mathcal X).
\]
As $\zeta\downarrow0$, one has
\begin{equation}
\label{eq:smooth-approximation-energy}
\|u^\zeta-u\|_{L^2(0,T;H_D^2(M))}
+
\|u^\zeta-u\|_{L^q(Q_T)}
\longrightarrow0,
\end{equation}
\begin{equation}
\label{eq:smooth-approximation-initial}
u^\zeta(0)\longrightarrow u_{\mathrm{in}}
\qquad\text{strongly in }L^2(M),
\end{equation}
and
\begin{equation}
\label{eq:smooth-approximation-time-derivative}
\partial_tu^\zeta
\longrightarrow
\partial_tu
\qquad\text{in }\mathcal E_T'.
\end{equation}
Moreover,
\begin{equation}
\label{eq:smooth-approximation-W}
W'(u^\zeta)\longrightarrow W'(u)
\qquad\text{strongly in }L^2(Q_T),
\end{equation}
\begin{equation}
\label{eq:smooth-approximation-b}
b(u^\zeta)\longrightarrow b(u)
\qquad\text{strongly in }L^{q'}(Q_T),
\end{equation}
and
\begin{equation}
\label{eq:smooth-approximation-pairing}
\int_0^T
\left|
\left\langle
\partial_tu^\zeta(t),
u(t)-u^\zeta(t)
\right\rangle
\right|\,dt
\longrightarrow0.
\end{equation}
In \eqref{eq:smooth-approximation-pairing}, the pairing is understood
through any decomposition
\[
\partial_tu^\zeta=f^\zeta+g^\zeta,
\]
with
\[
f^\zeta\in L^2(0,T;H_D^2(M)'),
\qquad
g^\zeta\in L^{q'}(Q_T).
\]
\end{lemma}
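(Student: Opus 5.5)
We construct $u^\zeta$ as a composition of a spectral (space) regularisation and a time mollification, with the time mollification arranged so that the initial value is not damaged.

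\textbf{Spatial smoothing.} For $\zeta>0$ let $J_\zeta:=e^{-\zeta A}$ denote the heat semigroup of the self-adjoint realisation $A$. It commutes with $A$ and with $P_m$ and maps $L^2(M)$ into $\bigcap_k D(A^k)$. It is a contraction on $L^2$ and on $H_D^2(M)$, with the norm $\|(I+A)v\|_{L^2}$.

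Since $J_\zeta$ is Markovian in both the Dirichlet and the closed case, it is also a contraction on every $L^p(M)$, $1\le p\le\infty$. By duality it is uniformly bounded on $H_D^2(M)'$ and on $L^{q'}(M)$, and it converges strongly to the identity on each of these spaces.

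\textbf{Time smoothing.} To preserve $u(0)=u_{\mathrm{in}}$, we first extend $u$ to $(-1,T+1)$ by reflection, $u(-t):=u(t)$ and $u(T+t):=u(T-t)$. Reflection preserves $\mathcal E_T$ membership, continuity into $L^2$, and $\partial_tu\in\mathcal E_T'$, because reflection at a point where $u$ is continuous produces no jump, only a sign change of $\partial_t u$. We then convolve in $t$ with a standard mollifier $\rho_\zeta$ and set
\[
u^\zeta:=\rho_\zeta *_t (J_\zeta u)\big|_{[0,T]}.
\]
Then $u^\zeta\in C^\infty([0,T];D(A^k))$ for every $k$, since $J_\zeta u\in L^1(-1,T+1;D(A^k))$.

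\textbf{Convergences.} The convergence \eqref{eq:smooth-approximation-energy} follows from the standard splitting
\[
\|u^\zeta-u\|\le\|\rho_\zeta*(J_\zeta u-u)\|+\|\rho_\zeta*u-u\|,
\]
together with the uniform boundedness of $J_\zeta$ and of convolution. The same argument, applied to $\partial_tu^\zeta=\rho_\zeta*J_\zeta\partial_tu$ with the decomposition $\partial_t u=f+g$ transported termwise, gives \eqref{eq:smooth-approximation-time-derivative}.

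For \eqref{eq:smooth-approximation-initial} we use $u\in C([-1,T+1];L^2)$ after reflection. Then $u^\zeta(0)=\int\rho_\zeta(s)J_\zeta u(-s)\,ds$ tends to $u(0)=u_{\mathrm{in}}$ by uniform continuity together with strong convergence of $J_\zeta$.

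Convergence \eqref{eq:smooth-approximation-W} follows from $L^q$ convergence by the interpolation argument stated at the end of Section~\ref{sec:model}. This gives $L^6$ convergence, and $u^3$ is continuous from $L^6$ to $L^2$.

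Convergence \eqref{eq:smooth-approximation-b} follows because the Nemytskii operator $b:L^q(Q_T)\to L^{q'}(Q_T)$ is continuous under the growth bound, by Vitali's theorem or the Krasnoselskii theorem.

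\textbf{The pairing (main obstacle).} The critical step is \eqref{eq:smooth-approximation-pairing}. Using the decomposition $\partial_tu^\zeta=f^\zeta+g^\zeta$ with $f^\zeta=\rho_\zeta*J_\zeta f$ and $g^\zeta=\rho_\zeta*J_\zeta g$, we estimate
\[
\int_0^T|\langle\partial_tu^\zeta,u-u^\zeta\rangle|
\le\|f^\zeta\|_{L^2(H_D^{2\prime})}\|u-u^\zeta\|_{L^2(H_D^2)}+\|g^\zeta\|_{L^{q'}(Q_T)}\|u-u^\zeta\|_{L^q(Q_T)}.
\]
The first factors stay bounded uniformly in $\zeta$, and the second factors tend to zero by \eqref{eq:smooth-approximation-energy}. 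Hence the pairing term vanishes, and it does not matter which admissible decomposition is used. Since any admissible decomposition of $\partial_tu^\zeta$ consists of functions bounded in the respective spaces by the same argument, the quantity is well defined independently of the choice up to this estimate.

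The genuinely delicate points are therefore the following: first, verifying that the reflected extension still has $\partial_tu\in\mathcal E_T'$ on the enlarged interval, which relies on $u\in C([0,T];L^2)$ to exclude a jump at the reflection points; second, establishing the $L^p$-contractivity of $J_\zeta$, which we take from the Markov property of the Dirichlet or closed heat semigroup; and third, checking that $J_\zeta$ commutes with $\partial_t$ on $\mathcal E_T'$, which holds by duality.
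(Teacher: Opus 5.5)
Your proposal is correct and is the paper's proof: the same reflection across $t=0$ and $t=T$ (with $u\in C([0,T];L^2(M))$ ruling out jump terms), the same time mollification, and the same spatial smoothing by $e^{-\zeta A}$ --- which commutes with the time convolution, so your order of composition yields the same $u^\zeta$ --- followed by the same H\"older bound on the pairing, using the uniformly bounded $f^\zeta,g^\zeta$ and \eqref{eq:smooth-approximation-energy}. Two cosmetic corrections: reflect only onto $(-\delta,T+\delta)$ with $\delta<T$ and $\zeta<\delta/2$, since your extension to $(-1,T+1)$ is undefined when $T<1$; and the pairing is independent of the decomposition because $\mathcal X$ is dense in $H_D^2(M)\cap L^q(M)$ (which $e^{-\zeta A}$ itself supplies), not because arbitrary decompositions are bounded --- that claim fails, since one may add and subtract an arbitrarily large $h\in L^2(Q_T)$.
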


\begin{proof}
Choose $\delta\in(0,T)$. We first extend $u$ from $(0,T)$ to
$(-\delta,T+\delta)$ by reflection:
\[
u_\delta(t)
=
\begin{cases}
u(-t),
&-\delta<t<0,
\\
u(t),
&0\leq t\leq T,
\\
u(2T-t),
&T<t<T+\delta.
\end{cases}
\]
Since
\[
u\in
L^2(0,T;H_D^2(M))
\cap
L^q(Q_T),
\]
the reflected function satisfies
\[
u_\delta
\in
L^2(-\delta,T+\delta;H_D^2(M))
\cap
L^q((-\delta,T+\delta)\times M).
\]

Choose a decomposition
\[
\partial_tu=f+g,
\]
where
\[
f\in L^2(0,T;H_D^2(M)'),
\qquad
g\in L^{q'}(Q_T).
\]
Define the reflected functions
\[
f_\delta(t)
=
\begin{cases}
-f(-t),
&-\delta<t<0,
\\
f(t),
&0<t<T,
\\
-f(2T-t),
&T<t<T+\delta,
\end{cases}
\]
and
\[
g_\delta(t)
=
\begin{cases}
-g(-t),
&-\delta<t<0,
\\
g(t),
&0<t<T,
\\
-g(2T-t),
&T<t<T+\delta.
\end{cases}
\]
Then
\[
f_\delta
\in
L^2(-\delta,T+\delta;H_D^2(M)'),
\]
and
\[
g_\delta
\in
L^{q'}((-\delta,T+\delta)\times M).
\]

Because
\[
u\in C([0,T];L^2(M)),
\]
the reflected function $u_\delta$ is continuous in $L^2(M)$ at the
reflection points $t=0$ and $t=T$. Its distributional time derivative
therefore contains no Dirac masses at these points, and
\[
\partial_tu_\delta=f_\delta+g_\delta
\]
in the sense of distributions on $(-\delta,T+\delta)$ with values in
\[
H_D^2(M)'+L^{q'}(M).
\]

Let $\rho_\zeta$ be a standard one-dimensional mollifier supported in
$(-\zeta,\zeta)$, where $0<\zeta<\delta/2$, and define
\[
\widetilde u^\zeta(t)
:=
(\rho_\zeta*u_\delta)(t),
\qquad
0\leq t\leq T.
\]
Then
\[
\widetilde u^\zeta
\longrightarrow u
\quad\text{in }
L^2(0,T;H_D^2(M))
\cap
L^q(Q_T).
\]
Moreover,
\[
\partial_t\widetilde u^\zeta
=
\rho_\zeta*f_\delta
+
\rho_\zeta*g_\delta,
\]
and
\[
\rho_\zeta*f_\delta
\longrightarrow f
\quad\text{in }L^2(0,T;H_D^2(M)'),
\]
while
\[
\rho_\zeta*g_\delta
\longrightarrow g
\quad\text{in }L^{q'}(Q_T).
\]
Consequently,
\[
\partial_t\widetilde u^\zeta
\longrightarrow
\partial_tu
\quad\text{in }\mathcal E_T'.
\]

The strong continuity of $u$ at $t=0$ and the reflected extension also give
\[
\widetilde u^\zeta(0)
\longrightarrow
u_{\mathrm{in}}
\quad\text{strongly in }L^2(M).
\]

We now regularize in space. Let
\[
S_\zeta:=e^{-\zeta A}
\]
be the heat semigroup generated by the chosen realization of $A=-\Delta_g$,
and set
\[
u^\zeta:=S_\zeta\widetilde u^\zeta.
\]
For every $\zeta>0$, the semigroup $S_\zeta$ maps $L^2(M)$ into
$D(A^k)$ for every $k\in\mathbb N$. Since
$\widetilde u^\zeta$ is smooth in time, it follows that
\[
u^\zeta\in C^\infty([0,T];D(A^k))
\qquad\text{for every }k\in\mathbb N.
\]
In particular,
\[
u^\zeta\in C^1([0,T];\mathcal X).
\]

The semigroup $(S_\zeta)_{\zeta\geq0}$ is strongly continuous and uniformly
bounded for $0<\zeta\leq1$ on $H_D^2(M)$ and on $L^q(M)$. Therefore
\[
u^\zeta-u
=
S_\zeta(\widetilde u^\zeta-u)
+
(S_\zeta u-u),
\]
and hence
\[
\|u^\zeta-u\|_{L^2(0,T;H_D^2(M))}
+
\|u^\zeta-u\|_{L^q(Q_T)}
\longrightarrow0.
\]
This proves \eqref{eq:smooth-approximation-energy}.

At the initial time,
\[
u^\zeta(0)-u_{\mathrm{in}}
=
S_\zeta
\bigl(
\widetilde u^\zeta(0)-u_{\mathrm{in}}
\bigr)
+
(S_\zeta u_{\mathrm{in}}-u_{\mathrm{in}}).
\]
The first term tends to zero because $S_\zeta$ is uniformly bounded on
$L^2(M)$, whereas the second tends to zero by the strong continuity of the
semigroup on $L^2(M)$. Thus
\[
u^\zeta(0)\longrightarrow u_{\mathrm{in}}
\quad\text{strongly in }L^2(M).
\]

Similarly,
\[
\partial_tu^\zeta
=
S_\zeta(\rho_\zeta*f_\delta)
+
S_\zeta(\rho_\zeta*g_\delta).
\]
The semigroup is strongly continuous on $H_D^2(M)'$ and on $L^{q'}(M)$.
Combining this with the convergence of the time mollifications gives
\[
\partial_tu^\zeta
\longrightarrow
\partial_tu
\quad\text{in }\mathcal E_T',
\]
which proves \eqref{eq:smooth-approximation-time-derivative}.

We next establish the nonlinear convergences. Since $q>6$ and $Q_T$ has
finite measure, \eqref{eq:smooth-approximation-energy} implies
\[
u^\zeta\longrightarrow u
\quad\text{strongly in }L^6(Q_T).
\]
For
\[
W'(r)=r^3-r,
\]
we have
\[
|r^3-s^3|
\leq
C\bigl(|r|^2+|s|^2\bigr)|r-s|.
\]
Therefore
\[
\begin{aligned}
\|(u^\zeta)^3-u^3\|_{L^2(Q_T)}
&\leq
C
\left(
\|u^\zeta\|_{L^6(Q_T)}^2
+
\|u\|_{L^6(Q_T)}^2
\right)
\|u^\zeta-u\|_{L^6(Q_T)}
\\
&\longrightarrow0.
\end{aligned}
\]
It follows that
\[
W'(u^\zeta)\longrightarrow W'(u)
\quad\text{strongly in }L^2(Q_T).
\]

We now consider the general anchoring law $b$. By continuity of $b$ and
the strong convergence of $u^\zeta$ in $L^q(Q_T)$,
\[
b(u^\zeta)\longrightarrow b(u)
\quad\text{in measure on }Q_T.
\]
The growth condition
\[
|b(r)|
\leq
c_2\bigl(1+|r|^{q-1}\bigr)
\]
implies
\[
|b(r)|^{q'}
\leq
C\bigl(1+|r|^q\bigr).
\]
Since
\[
u^\zeta\longrightarrow u
\quad\text{strongly in }L^q(Q_T),
\]
the family
\[
\bigl(|b(u^\zeta)|^{q'}\bigr)_{\zeta>0}
\]
is uniformly integrable. The same is true of
\[
|b(u^\zeta)-b(u)|^{q'}.
\]
Vitali's convergence theorem therefore gives
\[
b(u^\zeta)\longrightarrow b(u)
\quad\text{strongly in }L^{q'}(Q_T).
\]

It remains to prove \eqref{eq:smooth-approximation-pairing}. Write
\[
\partial_tu^\zeta
=
f^\zeta+g^\zeta,
\]
where
\[
f^\zeta
:=
S_\zeta(\rho_\zeta*f_\delta),
\qquad
g^\zeta
:=
S_\zeta(\rho_\zeta*g_\delta).
\]
The preceding convergence shows, in particular, that
\[
\sup_{0<\zeta\leq1}
\left(
\|f^\zeta\|_{L^2(0,T;H_D^2(M)')}
+
\|g^\zeta\|_{L^{q'}(Q_T)}
\right)
<\infty.
\]
Consequently,
\begin{align*}
\int_0^T
\left|
\left\langle
\partial_tu^\zeta,
u-u^\zeta
\right\rangle
\right|\,dt
&\leq
\|f^\zeta\|_{L^2(0,T;H_D^2(M)')}
\|u-u^\zeta\|_{L^2(0,T;H_D^2(M))}
\\
&\qquad\quad+
\|g^\zeta\|_{L^{q'}(Q_T)}
\|u-u^\zeta\|_{L^q(Q_T)}.
\end{align*}
Both terms tend to zero by
\eqref{eq:smooth-approximation-energy}. This proves
\eqref{eq:smooth-approximation-pairing} and completes the proof.
\end{proof}

\begin{theorem}[Existence and strong second-order compactness]
\label{thm:existence-weak-solution}
Let $q>6$, and assume that
\[
u_{\mathrm{in}}\in L^2(M),
\qquad
u_{\mathrm{ref}}\in L^q(M).
\]
Assume further that $\lambda$ satisfies \eqref{ass-lambda}, that $a$ satisfies
Assumption~\ref{ass:aclass}, and that $b$ satisfies the standing continuity,
monotonicity, coercivity, and growth assumptions.

Then there exist a subsequence of the Galerkin solutions $(u_m)$, not
relabeled, and a function
\[
u\in
L^\infty(0,T;L^2(M))
\cap
L^2(0,T;H_D^2(M))
\cap
L^q(Q_T),
\qquad
\partial_tu\in L^{q'}(0,T;\mathcal X'),
\]
where
\(
q'=\frac{q}{q-1},
\)
such that
\begin{align}
u_m
&\rightharpoonup u
&&\text{weakly in }L^2(0,T;H_D^2(M)),
\label{conv-m-weak-H2}
\\
u_m
&\overset{*}{\rightharpoonup}u
&&\text{weakly-* in }L^\infty(0,T;L^2(M)),
\label{conv-m-weakstar-L2}
\\
u_m
&\longrightarrow u
&&\text{strongly in }L^2(0,T;H^1(M)),
\label{conv-m-strong-H1}
\\
u_m
&\longrightarrow u
&&\text{strongly in }L^6(Q_T),
\label{conv-m-strong-L6}
\\
\Delta_gu_m
&\longrightarrow\Delta_gu
&&\text{strongly in }L^2(Q_T).
\label{conv-m-strong-Delta}
\end{align}
Consequently,
\[
u_m\longrightarrow u
\qquad\text{strongly in }L^2(0,T;H_D^2(M)).
\]
Moreover,
\[
a(\Delta_gu_m)|\nabla_gu_m|_g
\longrightarrow
a(\Delta_gu)|\nabla_gu|_g
\qquad\text{strongly in }L^2(Q_T).
\]
The limit $u$ satisfies, for every
\[
\varphi\in C^1([0,T];\mathcal X)
\]
and every $t\in[0,T]$,
\begin{align}
\label{weak-form-limit}
&\int_M u(t)\varphi(t)\,dV_g
-
\int_M u_{\mathrm{in}}\varphi(0)\,dV_g
-
\int_0^t\int_M
u\,\partial_t\varphi\,dV_g\,ds
\nonumber\\
&\qquad=
\int_0^t\int_M
\Bigl(
W'(u)
-\mu\Delta_gu
-\nu a(\Delta_gu)|\nabla_gu|_g
\Bigr)
\Delta_g\varphi\,dV_g\,ds
\nonumber\\
&\qquad\quad+
\int_0^t\int_M
\lambda(x)
\bigl(
b(u_{\mathrm{ref}})-b(u)
\bigr)
\varphi\,dV_g\,ds.
\end{align}
Furthermore,
\[
u\in C([0,T];L^2(M)),
\qquad
u(0)=u_{\mathrm{in}}
\quad\text{strongly in }L^2(M).
\]
Hence $u$ is a weak solution of
\eqref{eq:active-CH-manifold}--\eqref{bc-navier}.
\end{theorem}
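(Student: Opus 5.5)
The plan has four steps: extract weak limits, regularize the limit, run a one-sided comparison at the Galerkin level, and pass to the limit in the equation.

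\textbf{Step 1: weak limits.} From the uniform bounds \eqref{galerkin-uniform-estimate} and \eqref{galerkin-time-derivative-estimate} of Proposition~\ref{prop:Galerkin-existence}, Theorem~\ref{AuLi} with $X_0=H_D^2(M)$, $X=H_D^1(M)$, $X_1=\mathcal X'$ gives a subsequence with \eqref{conv-m-weak-H2}--\eqref{conv-m-strong-H1}. After a further extraction we also have convergence a.e. in $Q_T$. The $L^q(Q_T)$ bound plus interpolation gives strong convergence in $L^r(Q_T)$ for all $r<q$, in particular \eqref{conv-m-strong-L6}. All hypotheses of Lemma~\ref{lem:trace-preliminary-flux} then hold. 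That lemma yields $\chi_m\rightharpoonup\chi$, the preliminary equation \eqref{eq:preliminary-limit-formulation}, the continuous representative $u\in C([0,T];L^2(M))$ with $u(0)=u_{\mathrm{in}}$, and $\partial_t u\in\mathcal E_T'$. So Lemma~\ref{lem:smooth-approximation} applies and produces $u^\zeta$.

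\textbf{Step 2: comparison with the regularization.} Set $w_m:=u_m-P_mu^\zeta\in\mathcal V_m$ and test the Galerkin equation with $w_m$. Subtract $(\partial_tP_mu^\zeta,w_m)$ and the terms $\int_M(-\mu\Delta_gP_mu^\zeta-\nu a(\Delta_gP_mu^\zeta)|\nabla_gu_m|_g)\Delta_gw_m$ and $-\int_M\lambda b(P_mu^\zeta)w_m$ from both sides. This gives
\[
\tfrac12\tfrac{d}{dt}\|w_m\|^2+\mu\|\Delta_gw_m\|^2+\nu\!\int_M\!\bigl(a(\Delta_gu_m)-a(\Delta_gP_mu^\zeta)\bigr)\Delta_gw_m|\nabla_gu_m|_g+\!\int_M\!\lambda\bigl(b(u_m)-b(P_mu^\zeta)\bigr)w_m=R_m^\zeta .
\]
The third and fourth terms on the left are nonnegative by scalar monotonicity of $a$ and $b$, so they can be dropped. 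The remainder $R_m^\zeta$ consists of four pieces:
\begin{itemize}
\item $\int W'(u_m)\Delta_gw_m$, which passes to the limit as a strong $L^2$ limit times a weak one;
\item $-(\partial_tP_mu^\zeta,w_m)$;
\item the term $-\mu\int\Delta_gP_mu^\zeta\,\Delta_g w_m$;
\item the active term $-\nu\int a(\Delta_gP_mu^\zeta)|\nabla_gu_m|_g\Delta_gw_m$, where $a(\Delta_gP_mu^\zeta)\to a(\Delta_gu^\zeta)$ strongly because $u^\zeta$ is smooth and $a$ is bounded and continuous, and $|\nabla_gu_m|_g$ converges strongly.
\end{itemize}
We also use $\lambda b(u_{\mathrm{ref}})$ tested against $w_m$, and $b(P_mu^\zeta)\to b(u^\zeta)$ strongly in $L^{q'}$; the latter holds because $P_mu^\zeta\to u^\zeta$ in $\mathcal X\hookrightarrow L^q$.

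\textbf{Step 3: the double limit.} Integrate over $(0,T)$ and take $\limsup_{m\to\infty}$. Every term on the right converges, through weak-times-strong products, to the corresponding expression with $u_m$ replaced by $u$ and $a(\Delta_gu_m)|\nabla u_m|$ replaced by $\chi$. The initial term converges as $\|P_mu_{\mathrm{in}}-P_mu^\zeta(0)\|\to\|u_{\mathrm{in}}-u^\zeta(0)\|$.

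The main obstacle is the quadratic term $\int_0^T(\partial_tu_m,w_m)$, together with $\|\Delta_g w_m\|^2$, which is not weakly continuous. My plan is to keep $\mu\int\|\Delta_gw_m\|^2$ and $\tfrac12\|w_m(T)\|^2$ on the left and rewrite the right side linearly in $u_m$. Concretely, the right side will involve the Galerkin flux paired against $\Delta_gw_m$ only through the original equation. To identify the limit I use the energy identity \eqref{eq:preliminary-energy-identity} for $u$ in place of $\lim\|u_m(T)\|^2$. Weak lower semicontinuity handles $\|w_m(T)\|^2$. What remains should be of the form
\[
\limsup_m\mu\|\Delta_g(u_m-P_mu^\zeta)\|_{L^2(Q_T)}^2\le C\Bigl(\|u_{\mathrm{in}}-u^\zeta(0)\|^2+\|u-u^\zeta\|_{\mathcal E_T}(1+\dots)+\int_0^T|\langle\partial_tu^\zeta,u-u^\zeta\rangle|\Bigr),
\]
where the $\chi$-terms appear only paired with $\Delta_g(u-u^\zeta)$. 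By Lemma~\ref{lem:smooth-approximation}, including \eqref{eq:smooth-approximation-pairing}, everything on the right vanishes as $\zeta\downarrow0$. Since $\Delta_gP_mu^\zeta\to\Delta_gu^\zeta\to\Delta_gu$, this gives \eqref{conv-m-strong-Delta}.

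\textbf{Step 4: conclusion.} Estimate \eqref{elliptic-estimate} upgrades \eqref{conv-m-strong-Delta} to strong convergence in $L^2(0,T;H_D^2)$. Along a subsequence $\Delta_gu_m\to\Delta_gu$ a.e., so by continuity and boundedness of $a$ and dominated convergence, $a(\Delta_gu_m)|\nabla_gu_m|_g\to a(\Delta_gu)|\nabla_gu|_g$ strongly in $L^2(Q_T)$. Hence $\chi=a(\Delta_gu)|\nabla_gu|_g$. Substituting into \eqref{eq:preliminary-limit-formulation} and localizing in time with the $C([0,T];L^2)$ trace gives \eqref{weak-form-limit}.
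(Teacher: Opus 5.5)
Your proposal is correct and follows the paper's proof essentially step for step: the same comparison function $w_m=u_m-P_mu^\zeta\in\mathcal V_m$ built from Lemma~\ref{lem:smooth-approximation}, and the same one-sided use of the scalar monotonicity of $a$ and $b$ so that the classifier on the right is evaluated only at $\Delta_gP_mu^\zeta$; then the limit $m\to\infty$ followed by $\zeta\downarrow0$, and the elliptic estimate with dominated convergence to identify the active product. The ``main obstacle'' paragraph in your Step 3 is superfluous: integrating the $\tfrac12\tfrac{d}{dt}\|w_m\|^2$ term from your Step 2 identity and dropping $\tfrac12\|w_m(T)\|^2\ge0$ already leaves a right-hand side linear in $w_m$ with no $\chi$-terms at all (exactly as in the paper, so \eqref{eq:preliminary-energy-identity} is never needed), and your recovery of \eqref{weak-form-limit} by time-localizing \eqref{eq:preliminary-limit-formulation} with the continuous representative is a valid substitute for the paper's direct passage in the Galerkin identity using $u_m\to u$ in $C([0,T];\mathcal X')$.
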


\begin{proof}
The uniform estimates of
Proposition~\ref{prop:Galerkin-existence} give
\[
(u_m)
\quad\text{bounded in}\quad
L^\infty(0,T;L^2(M))
\cap
L^2(0,T;H_D^2(M))
\cap
L^q(Q_T),
\]
and
\[
(\partial_tu_m)
\quad\text{bounded in}\quad
L^{q'}(0,T;\mathcal X').
\]
Since $q'>1$ and $\mathcal X'$ is reflexive, after extracting a subsequence
there exist functions $u$ and $\eta$ such that
\[
u_m\rightharpoonup u
\quad\text{weakly in }L^2(0,T;H_D^2(M)),
\]
\[
u_m\overset{*}{\rightharpoonup}u
\quad\text{weakly-* in }L^\infty(0,T;L^2(M)),
\]
and
\[
\partial_tu_m\rightharpoonup\eta
\quad\text{weakly in }L^{q'}(0,T;\mathcal X').
\]

We identify $\eta$ as the distributional time derivative of $u$. Let
\(
\psi\in C_c^\infty((0,T);\mathcal X).
\)
Then
\[
\begin{aligned}
\int_0^T
\langle\eta,\psi\rangle_{\mathcal X',\mathcal X}\,dt
&=
\lim_{m\to\infty}
\int_0^T
\langle\partial_tu_m,\psi\rangle_{\mathcal X',\mathcal X}\,dt
=
-\lim_{m\to\infty}
\int_0^T
(u_m,\partial_t\psi)_{L^2(M)}\,dt
\\
&=
-\int_0^T
(u,\partial_t\psi)_{L^2(M)}\,dt.
\end{aligned}
\]
Thus
\[
\eta=\partial_tu
\]
in the sense of distributions, and therefore
\[
\partial_tu\in L^{q'}(0,T;\mathcal X').
\]
Since
\[
H_D^2(M)\Subset H_D^1(M)\hookrightarrow\mathcal X',
\]
the Aubin--Lions lemma gives, after passing to a further subsequence,
\[
u_m\longrightarrow u
\qquad\text{strongly in }L^2(0,T;H_D^1(M)).
\]
In particular,
\[
u_m\longrightarrow u
\qquad\text{strongly in }L^2(Q_T),
\]
and, after extraction,
\[
u_m(t,x)\longrightarrow u(t,x)
\qquad\text{for a.e. }(t,x)\in Q_T.
\]
Interpolation between the strong $L^2(Q_T)$-convergence and the uniform
$L^q(Q_T)$-bound gives
\[
u_m\longrightarrow u
\qquad\text{strongly in }L^r(Q_T)
\quad\text{for every }1\leq r<q.
\]
In particular, since $q>6$,
\[
u_m\longrightarrow u
\qquad\text{strongly in }L^6(Q_T).
\]
Consequently, for the standard double-well potential,
\[
W'(u_m)=u_m^3-u_m
\longrightarrow
u^3-u=W'(u)
\qquad\text{strongly in }L^2(Q_T).
\]
We next consider the anchoring term. Since $u_m\to u$ a.e. in $Q_T$ and
$b$ is continuous,
\[
b(u_m)\longrightarrow b(u)
\qquad\text{a.e. in }Q_T.
\]
The growth assumption
\[
|b(r)|
\leq
c_2\bigl(1+|r|^{q-1}\bigr)
\]
implies
\[
|b(r)|^{q'}
\leq
C\bigl(1+|r|^q\bigr).
\]
The uniform $L^q(Q_T)$-bound for $(u_m)$ therefore shows that
$(b(u_m))$ is bounded in $L^{q'}(Q_T)$. It follows that
\[
b(u_m)\rightharpoonup b(u)
\qquad\text{weakly in }L^{q'}(Q_T).
\]We introduce the active product
\[
\chi_m:=a(\Delta_g u_m)|\nabla_g u_m|_g.
\]Since $a$ is bounded and $(\nabla_gu_m)$ is bounded in $L^2(Q_T)$,
the sequence $(\chi_m)$ is bounded in $L^2(Q_T)$. Hence, after passing to
a further subsequence,
\[
\chi_m\rightharpoonup\chi
\qquad\text{weakly in }L^2(Q_T)
\]
for some $\chi\in L^2(Q_T)$.

The convergences obtained above are precisely the hypotheses of
Lemma~\ref{lem:trace-preliminary-flux}. Therefore $u$ satisfies the
preliminary equation
\[
\partial_tu
=
\Delta_g
\bigl(
W'(u)-\mu\Delta_gu-\nu\chi
\bigr)
+
\lambda(x)
\bigl(
b(u_{\mathrm{ref}})-b(u)
\bigr)
\]
in the sense of distributions on $(0,T)\times M$. In particular,
\begin{equation}
\label{u-reg}
u\in C([0,T];L^2(M)),
\qquad
u(0)=u_{\mathrm{in}}
\quad\text{strongly in }L^2(M),
\end{equation}
and
\[
\partial_tu
\in
L^2(0,T;H_D^2(M)')
+
L^{q'}(Q_T).
\]

Thus the hypotheses of
Lemma~\ref{lem:smooth-approximation} are satisfied. It remains to prove the
strong convergence of $\Delta_gu_m$ and to identify the preliminary active product
$\chi$. We split the proof into five steps.

\medskip
\noindent
\medskip
\noindent
\textbf{Step 1: regularized comparison functions.}

By \eqref{u-reg}, we have
\[
u\in C([0,T];L^2(M)),
\qquad
u(0)=u_{\mathrm{in}},
\]
and, moreover,
\[
u\in
L^2(0,T;H_D^2(M))
\cap
L^q(Q_T),
\qquad
\partial_tu\in
L^2(0,T;H_D^2(M)')
+
L^{q'}(Q_T).
\]
We may therefore apply Lemma~\ref{lem:smooth-approximation}. It yields a
family
\[
u^\zeta\in C^1([0,T];\mathcal X),
\qquad
\zeta>0,
\]
such that, as $\zeta\downarrow0$,
\begin{align}
u^\zeta
&\longrightarrow u
&&\text{strongly in }
L^2(0,T;H_D^2(M))
\cap
L^q(Q_T),
\label{approx-space}
\\
u^\zeta(0)
&\longrightarrow u_{\mathrm{in}}
&&\text{strongly in }L^2(M),
\label{approx-initial}
\\
W'(u^\zeta)
&\longrightarrow W'(u)
&&\text{strongly in }L^2(Q_T),
\label{approx-potential}
\\
b(u^\zeta)
&\longrightarrow b(u)
&&\text{strongly in }L^{q'}(Q_T).
\label{approx-anchor}
\end{align}
In addition,
\begin{equation}
\label{approx-zeta}
\int_0^T
\left|
\left\langle
\partial_tu^\zeta(t),
u(t)-u^\zeta(t)
\right\rangle
\right|\,dt
\longrightarrow0.
\end{equation}

Fix $\zeta>0$. Since the approximation supplied by
Lemma~\ref{lem:smooth-approximation} is smooth with values in
$D(A^k)$ for every $k\in\mathbb N$, both
\[
u^\zeta,\ \partial_tu^\zeta
\in C([0,T];\mathcal X).
\]
The spectral projections $P_m$ are uniformly bounded on $\mathcal X$ and
converge strongly to the identity on $\mathcal X$. Consequently,
\[
P_mu^\zeta
\longrightarrow
u^\zeta
\qquad\text{strongly in }C^1([0,T];\mathcal X)
\]
as $m\to\infty$.

For fixed $m\in\mathbb N$ and $\zeta>0$, define
\[
w_m^\zeta
:=
u_m-P_mu^\zeta.
\]
Since
\[
u_m\in\mathcal V_m,
\qquad
P_mu^\zeta\in\mathcal V_m,
\]
we have
\[
w_m^\zeta\in\mathcal V_m.
\]
Thus $w_m^\zeta$ is an admissible test function in the Galerkin equation.

\medskip

\noindent

\medskip
\noindent
\textbf{Step 2: one-sided comparison inequality.}

Fix $\zeta>0$ and $t\in[0,T]$. Testing the Galerkin equation with
\[
w_m^\zeta=u_m-P_mu^\zeta\in\mathcal V_m
\]
and integrating over $(0,t)$ gives
\begin{align}
\label{comparison-single-1}
\int_0^t
(\partial_su_m,w_m^\zeta)_{L^2(M)}\,ds
&=
\int_0^t\int_M
W'(u_m)\Delta_gw_m^\zeta\,dV_g\,ds
\nonumber\\
&\quad
-\mu\int_0^t\int_M
\Delta_gu_m\,\Delta_gw_m^\zeta\,dV_g\,ds
\nonumber\\
&\quad
-\nu\int_0^t\int_M
a(\Delta_gu_m)|\nabla_gu_m|_g
\Delta_gw_m^\zeta\,dV_g\,ds
\nonumber\\
&\quad
+\int_0^t\int_M
\lambda(x)
\bigl(
b(u_{\mathrm{ref}})-b(u_m)
\bigr)
w_m^\zeta\,dV_g\,ds .
\end{align}

Since
\[
\Delta_gw_m^\zeta
=
\Delta_gu_m-\Delta_gP_mu^\zeta,
\]
we have
\[
\Delta_gu_m
=
\Delta_gw_m^\zeta+\Delta_gP_mu^\zeta.
\]
Consequently,
\begin{align}
-\mu\int_0^t\int_M
\Delta_gu_m\,\Delta_gw_m^\zeta\,dV_g\,ds
&=
-\mu\int_0^t\int_M
|\Delta_gw_m^\zeta|^2\,dV_g\,ds
\nonumber\\
&\quad
-\mu\int_0^t\int_M
\Delta_gP_mu^\zeta\,\Delta_gw_m^\zeta\,dV_g\,ds.
\label{comparison-linear-term}
\end{align}

We next use the monotonicity of the classifier. Adding and subtracting
$a(\Delta_gP_mu^\zeta)$, we obtain
\begin{align*}
&
-\nu
a(\Delta_gu_m)|\nabla_gu_m|_g\Delta_gw_m^\zeta
\\
&\quad=
-\nu
\bigl(
a(\Delta_gu_m)-a(\Delta_gP_mu^\zeta)
\bigr)
|\nabla_gu_m|_g
\bigl(
\Delta_gu_m-\Delta_gP_mu^\zeta
\bigr)
\\
&\qquad
-\nu
a(\Delta_gP_mu^\zeta)
|\nabla_gu_m|_g
\Delta_gw_m^\zeta.
\end{align*}
Since $a$ is non-decreasing,
\[
\bigl(
a(\Delta_gu_m)-a(\Delta_gP_mu^\zeta)
\bigr)
\bigl(
\Delta_gu_m-\Delta_gP_mu^\zeta
\bigr)
\geq0
\]
pointwise, and $|\nabla_gu_m|_g\geq0$. Therefore,
\begin{align}
\label{comparison-active-term}
&-\nu\int_0^t\int_M
a(\Delta_gu_m)|\nabla_gu_m|_g
\Delta_gw_m^\zeta\,dV_g\,ds
\nonumber\\
&\qquad\leq
-\nu\int_0^t\int_M
a(\Delta_gP_mu^\zeta)|\nabla_gu_m|_g
\Delta_gw_m^\zeta\,dV_g\,ds.
\end{align}

The monotonicity of the anchoring law yields a similar one-sided estimate.
Indeed, since $\lambda\geq0$ and
\[
\bigl(
b(u_m)-b(P_mu^\zeta)
\bigr)
\bigl(
u_m-P_mu^\zeta
\bigr)
\geq0,
\]
we have
\begin{align}
\label{comparison-anchoring-term}
&\int_0^t\int_M
\lambda(x)
\bigl(
b(u_{\mathrm{ref}})-b(u_m)
\bigr)
w_m^\zeta\,dV_g\,ds
\nonumber\\
&\qquad\leq
\int_0^t\int_M
\lambda(x)
\bigl(
b(u_{\mathrm{ref}})-b(P_mu^\zeta)
\bigr)
w_m^\zeta\,dV_g\,ds.
\end{align}

Combining
\eqref{comparison-single-1}--\eqref{comparison-anchoring-term}, we obtain
\begin{align}
\label{comparison-single-2}
&\int_0^t
(\partial_su_m,w_m^\zeta)_{L^2(M)}\,ds
+
\mu\int_0^t\int_M
|\Delta_gw_m^\zeta|^2\,dV_g\,ds
\nonumber\\
&\qquad\leq
\int_0^t\int_M
W'(u_m)\Delta_gw_m^\zeta\,dV_g\,ds
\nonumber\\
&\qquad\quad
-\nu\int_0^t\int_M
a(\Delta_gP_mu^\zeta)|\nabla_gu_m|_g
\Delta_gw_m^\zeta\,dV_g\,ds
\nonumber\\
&\qquad\quad
-\mu\int_0^t\int_M
\Delta_gP_mu^\zeta\,\Delta_gw_m^\zeta\,dV_g\,ds
\nonumber\\
&\qquad\quad
+\int_0^t\int_M
\lambda(x)
\bigl(
b(u_{\mathrm{ref}})-b(P_mu^\zeta)
\bigr)
w_m^\zeta\,dV_g\,ds.
\end{align}

We now rewrite the time-derivative term. Since
\[
\partial_sw_m^\zeta
=
\partial_su_m-\partial_sP_mu^\zeta,
\]
we have
\begin{align}
\label{time-parts-single}
\int_0^t
(\partial_su_m,w_m^\zeta)_{L^2(M)}\,ds
&=
\frac12
\|w_m^\zeta(t)\|_{L^2(M)}^2
-
\frac12
\|w_m^\zeta(0)\|_{L^2(M)}^2
\nonumber\\
&\quad
+
\int_0^t
(\partial_sP_mu^\zeta,w_m^\zeta)_{L^2(M)}\,ds.
\end{align}
By the initial condition for the Galerkin approximation,
\[
w_m^\zeta(0)
=
P_mu_{\mathrm{in}}-P_mu^\zeta(0).
\]
Therefore,
\begin{align}
\label{time-parts-single-expanded}
\int_0^t
(\partial_su_m,w_m^\zeta)_{L^2(M)}\,ds
&=
\frac12
\|w_m^\zeta(t)\|_{L^2(M)}^2
-
\frac12
\|P_mu_{\mathrm{in}}-P_mu^\zeta(0)\|_{L^2(M)}^2
\nonumber\\
&\quad
+
\int_0^t
(\partial_sP_mu^\zeta,w_m^\zeta)_{L^2(M)}\,ds.
\end{align}

Substituting \eqref{time-parts-single-expanded} into
\eqref{comparison-single-2} and dropping the non-negative terminal term
\[
\frac12\|w_m^\zeta(t)\|_{L^2(M)}^2,
\]
we arrive at
\begin{align}
\label{comparison-single-3}
&\mu\int_0^t\int_M
|\Delta_gw_m^\zeta|^2\,dV_g\,ds
\nonumber\\
&\qquad\leq
\frac12
\|P_mu_{\mathrm{in}}-P_mu^\zeta(0)\|_{L^2(M)}^2
-
\int_0^t
(\partial_sP_mu^\zeta,w_m^\zeta)_{L^2(M)}\,ds
\nonumber\\
&\qquad\quad
+
\int_0^t\int_M
W'(u_m)\Delta_gw_m^\zeta\,dV_g\,ds
\nonumber\\
&\qquad\quad
-\nu\int_0^t\int_M
a(\Delta_gP_mu^\zeta)|\nabla_gu_m|_g
\Delta_gw_m^\zeta\,dV_g\,ds
\nonumber\\
&\qquad\quad
-\mu\int_0^t\int_M
\Delta_gP_mu^\zeta\,\Delta_gw_m^\zeta\,dV_g\,ds
\nonumber\\
&\qquad\quad
+
\int_0^t\int_M
\lambda(x)
\bigl(
b(u_{\mathrm{ref}})-b(P_mu^\zeta)
\bigr)
w_m^\zeta\,dV_g\,ds.
\end{align}

\medskip
\noindent

\medskip
\noindent
\textbf{Step 3: passage to the limit as $m\to\infty$ for fixed $\zeta$.}

Fix $\zeta>0$. From the strong convergence of $u_m$ in
$L^2(0,T;H^1(M))$ and the convergence
\[
P_mu^\zeta\longrightarrow u^\zeta
\qquad\text{strongly in }C^1([0,T];\mathcal X),
\]
we obtain
\begin{equation}
\label{comparison-w-strong-H1}
w_m^\zeta
=
u_m-P_mu^\zeta
\longrightarrow
u-u^\zeta
\qquad\text{strongly in }L^2(0,T;H^1(M)).
\end{equation}
Moreover,
\begin{equation}
\label{comparison-w-weak-H2}
\Delta_gw_m^\zeta
=
\Delta_gu_m-\Delta_gP_mu^\zeta
\rightharpoonup
\Delta_g(u-u^\zeta)
\qquad\text{weakly in }L^2(Q_T).
\end{equation}
For fixed $\zeta$, we also have
\begin{align}
P_mu^\zeta
&\longrightarrow u^\zeta
&&\text{strongly in }L^2(0,T;H_D^2(M))
\cap L^q(Q_T),
\label{comparison-projection-space}
\\
\partial_tP_mu^\zeta
&\longrightarrow\partial_tu^\zeta
&&\text{strongly in }L^2(Q_T).
\label{comparison-projection-time}
\end{align}

We next identify the limit of the coefficient in the active term. Since
\[
\Delta_gP_mu^\zeta
\longrightarrow
\Delta_gu^\zeta
\qquad\text{strongly in }L^2(Q_T),
\]
the continuity and boundedness of $a$ imply
\[
a(\Delta_gP_mu^\zeta)
\longrightarrow
a(\Delta_gu^\zeta)
\qquad\text{strongly in }L^p(Q_T)
\]
for every finite $p\geq1$. Together with
\[
\nabla_gu_m\longrightarrow\nabla_gu
\qquad\text{strongly in }L^2(Q_T),
\]
this gives
\begin{equation}
\label{comparison-active-coefficient-limit}
a(\Delta_gP_mu^\zeta)|\nabla_gu_m|_g
\longrightarrow
a(\Delta_gu^\zeta)|\nabla_gu|_g
\qquad\text{strongly in }L^2(Q_T).
\end{equation}
Indeed,
\begin{align*}
&
\left\|
a(\Delta_gP_mu^\zeta)|\nabla_gu_m|_g
-
a(\Delta_gu^\zeta)|\nabla_gu|_g
\right\|_{L^2(Q_T)}
\\
&\qquad\leq
\|a\|_{L^\infty}
\|\nabla_gu_m-\nabla_gu\|_{L^2(Q_T)}
\\
&\qquad\quad+
\left\|
\bigl(
a(\Delta_gP_mu^\zeta)-a(\Delta_gu^\zeta)
\bigr)
|\nabla_gu|_g
\right\|_{L^2(Q_T)},
\end{align*}
and the second term tends to zero by dominated convergence.

For the anchoring term, the convergence
\[
P_mu^\zeta\longrightarrow u^\zeta
\qquad\text{strongly in }L^q(Q_T)
\]
and the continuity and growth assumptions on $b$ imply
\begin{equation}
\label{comparison-anchor-coefficient-limit}
b(P_mu^\zeta)
\longrightarrow
b(u^\zeta)
\qquad\text{strongly in }L^{q'}(Q_T).
\end{equation}
Indeed, the convergence holds in measure, while
\[
|b(r)|^{q'}
\leq
C\bigl(1+|r|^q\bigr),
\]
so the conclusion follows from Vitali's theorem.

Furthermore, $(w_m^\zeta)$ is bounded in $L^q(Q_T)$ and converges almost
everywhere to $u-u^\zeta$. Consequently,
\begin{equation}
\label{comparison-w-weak-Lq}
w_m^\zeta
\rightharpoonup
u-u^\zeta
\qquad\text{weakly in }L^q(Q_T).
\end{equation}
Combining
\eqref{comparison-anchor-coefficient-limit} and
\eqref{comparison-w-weak-Lq}, we obtain, for every $t\in[0,T]$,
\begin{align}
\label{anchoring-limit}
&\int_0^t\int_M
\lambda(x)
\bigl(
b(u_{\mathrm{ref}})-b(P_mu^\zeta)
\bigr)
w_m^\zeta\,dV_g\,ds
\nonumber\\
&\qquad\longrightarrow
\int_0^t\int_M
\lambda(x)
\bigl(
b(u_{\mathrm{ref}})-b(u^\zeta)
\bigr)
(u-u^\zeta)\,dV_g\,ds.
\end{align}

We may now take the limit superior in
\eqref{comparison-single-3}. The initial term satisfies
\[
P_mu_{\mathrm{in}}-P_mu^\zeta(0)
\longrightarrow
u_{\mathrm{in}}-u^\zeta(0)
\qquad\text{strongly in }L^2(M).
\]
Moreover, by
\eqref{comparison-projection-time} and
\eqref{comparison-w-strong-H1},
\[
\int_0^t
(\partial_sP_mu^\zeta,w_m^\zeta)_{L^2(M)}\,ds
\longrightarrow
\int_0^t
\left\langle
\partial_su^\zeta,u-u^\zeta
\right\rangle\,ds.
\]
The strong convergence of $W'(u_m)$ in $L^2(Q_T)$, together with
\eqref{comparison-w-weak-H2}, gives
\[
\int_0^t\int_M
W'(u_m)\Delta_gw_m^\zeta\,dV_g\,ds
\longrightarrow
\int_0^t\int_M
W'(u)\Delta_g(u-u^\zeta)\,dV_g\,ds.
\]
Similarly, \eqref{comparison-active-coefficient-limit} and
\eqref{comparison-w-weak-H2} yield
\begin{align*}
&\int_0^t\int_M
a(\Delta_gP_mu^\zeta)|\nabla_gu_m|_g
\Delta_gw_m^\zeta\,dV_g\,ds
\\
&\qquad\longrightarrow
\int_0^t\int_M
a(\Delta_gu^\zeta)|\nabla_gu|_g
\Delta_g(u-u^\zeta)\,dV_g\,ds.
\end{align*}
Finally,
\[
\int_0^t\int_M
\Delta_gP_mu^\zeta\Delta_gw_m^\zeta\,dV_g\,ds
\longrightarrow
\int_0^t\int_M
\Delta_gu^\zeta\Delta_g(u-u^\zeta)\,dV_g\,ds.
\]

Consequently,
\begin{equation}
\label{comparison-single-4}
\mu
\limsup_{m\to\infty}
\int_0^t\int_M
|\Delta_gw_m^\zeta|^2\,dV_g\,ds
\leq
r_\zeta(t),
\end{equation}
where
\begin{align}
\label{rzeta-single}
r_\zeta(t)
:={}&
\frac12
\|u_{\mathrm{in}}-u^\zeta(0)\|_{L^2(M)}^2
\nonumber\\
&+
\left|
\int_0^t
\left\langle
\partial_su^\zeta,
u-u^\zeta
\right\rangle ds
\right|
\nonumber\\
&+
\left|
\int_0^t\int_M
W'(u)\Delta_g(u-u^\zeta)\,dV_g\,ds
\right|
\nonumber\\
&+
\nu
\left|
\int_0^t\int_M
a(\Delta_gu^\zeta)|\nabla_gu|_g
\Delta_g(u-u^\zeta)\,dV_g\,ds
\right|
\nonumber\\
&+
\mu
\left|
\int_0^t\int_M
\Delta_gu^\zeta\Delta_g(u-u^\zeta)\,dV_g\,ds
\right|
\nonumber\\
&+
\left|
\int_0^t\int_M
\lambda(x)
\bigl(
b(u_{\mathrm{ref}})-b(u^\zeta)
\bigr)
(u-u^\zeta)\,dV_g\,ds
\right|.
\end{align}

We finally show that the remainder vanishes as $\zeta\downarrow0$. The
first two terms tend to zero by
\eqref{approx-initial} and \eqref{approx-zeta}. Moreover,
\[
W'(u)\in L^2(Q_T),
\qquad
\Delta_g(u-u^\zeta)\longrightarrow0
\quad\text{in }L^2(Q_T),
\]
and therefore the double-well term tends to zero. Since $a$ is bounded,
\[
\begin{aligned}
&
\left|
\int_0^t\int_M
a(\Delta_gu^\zeta)|\nabla_gu|_g
\Delta_g(u-u^\zeta)\,dV_g\,ds
\right|
\\
&\qquad\leq
\|a\|_{L^\infty}
\|\nabla_gu\|_{L^2(Q_T)}
\|\Delta_g(u-u^\zeta)\|_{L^2(Q_T)}
\longrightarrow0.
\end{aligned}
\]
The linear fourth-order term tends to zero because
$(\Delta_gu^\zeta)$ is bounded in $L^2(Q_T)$ and
\[
\Delta_g(u-u^\zeta)\longrightarrow0
\qquad\text{in }L^2(Q_T).
\]

For the anchoring term, write
\[
b(u_{\mathrm{ref}})-b(u^\zeta)
=
b(u_{\mathrm{ref}})-b(u)
+
b(u)-b(u^\zeta).
\]
Then
\begin{align*}
&
\left|
\int_0^t\int_M
\lambda(x)
\bigl(
b(u_{\mathrm{ref}})-b(u^\zeta)
\bigr)
(u-u^\zeta)\,dV_g\,ds
\right|
\\
&\qquad\leq
\|\lambda\|_{L^\infty}
\Bigl(
\|b(u_{\mathrm{ref}})-b(u)\|_{L^{q'}(Q_T)}
+
\|b(u)-b(u^\zeta)\|_{L^{q'}(Q_T)}
\Bigr)
\|u-u^\zeta\|_{L^q(Q_T)},
\end{align*}
which tends to zero by
\eqref{approx-space} and \eqref{approx-anchor}. Hence
\begin{equation}
\label{rzeta-vanishing}
\sup_{t\in[0,T]}r_\zeta(t)
\longrightarrow0
\qquad\text{as }\zeta\downarrow0.
\end{equation}

\medskip
\noindent

\medskip
\noindent
\textbf{Step 4: strong second-order compactness.}

For \(t\in(0,T]\), set
\[
Q_t:=(0,t)\times M.
\]
For every fixed \(\zeta>0\), we have
\[
\Delta_g(u_m-u)
=
\Delta_gw_m^\zeta
+
\Delta_g(P_mu^\zeta-u^\zeta)
+
\Delta_g(u^\zeta-u).
\]
Hence
\begin{align}
\label{laplacian-comparison-triangle}
\|\Delta_g(u_m-u)\|_{L^2(Q_t)}
&\leq
\|\Delta_gw_m^\zeta\|_{L^2(Q_t)}
+
\|\Delta_g(P_mu^\zeta-u^\zeta)\|_{L^2(Q_t)}
\nonumber\\
&\quad+
\|\Delta_g(u^\zeta-u)\|_{L^2(Q_t)}.
\end{align}

For fixed \(\zeta>0\),
\[
P_mu^\zeta\longrightarrow u^\zeta
\qquad
\text{strongly in }L^2(0,T;H_D^2(M)),
\]
and therefore
\[
\|\Delta_g(P_mu^\zeta-u^\zeta)\|_{L^2(Q_t)}
\longrightarrow0
\qquad\text{as }m\to\infty.
\]
Moreover, \eqref{comparison-single-4} gives
\[
\limsup_{m\to\infty}
\|\Delta_gw_m^\zeta\|_{L^2(Q_t)}^2
\leq
\mu^{-1}r_\zeta(t).
\]
Taking the limit superior in
\eqref{laplacian-comparison-triangle}, we obtain
\begin{equation}
\label{laplacian-comparison-limsup}
\limsup_{m\to\infty}
\|\Delta_g(u_m-u)\|_{L^2(Q_t)}
\leq
\mu^{-1/2}r_\zeta(t)^{1/2}
+
\|\Delta_g(u^\zeta-u)\|_{L^2(Q_t)}.
\end{equation}

By \eqref{rzeta-vanishing},
\[
\sup_{t\in[0,T]}r_\zeta(t)
\longrightarrow0
\qquad\text{as }\zeta\downarrow0,
\]
whereas
\[
u^\zeta\longrightarrow u
\qquad
\text{strongly in }L^2(0,T;H_D^2(M)).
\]
Letting \(\zeta\downarrow0\) in
\eqref{laplacian-comparison-limsup}, we conclude that
\[
\Delta_gu_m\longrightarrow\Delta_gu
\qquad
\text{strongly in }L^2(Q_t)
\]
for every \(t\in(0,T]\). In particular, taking \(t=T\), we obtain
\begin{equation}
\label{strong-laplacian-final}
\Delta_gu_m\longrightarrow\Delta_gu
\qquad
\text{strongly in }L^2(Q_T).
\end{equation}

Since we already know that
\[
u_m\longrightarrow u
\qquad
\text{strongly in }L^2(Q_T),
\]
the elliptic estimate
\[
\|z\|_{H^2(M)}
\leq
C\Bigl(
\|\Delta_gz\|_{L^2(M)}
+
\|z\|_{L^2(M)}
\Bigr),
\qquad
z\in H_D^2(M),
\]
gives
\begin{equation}
\label{strong-H2-final}
u_m\longrightarrow u
\qquad
\text{strongly in }L^2(0,T;H_D^2(M)).
\end{equation}

\medskip
\noindent
\textbf{Step 5: passage to the limit in the weak formulation.}

Let
\[
\varphi\in C^1([0,T];\mathcal X).
\]
Since the Galerkin equation may only be tested with functions belonging to
\(\mathcal V_m\), we use \(P_m\varphi\) as a time-dependent test function.
After integration over \((0,t)\), we obtain
\begin{align}
\label{weak-form-m-integrated}
&\int_M u_m(t)P_m\varphi(t)\,dV_g
-
\int_M P_mu_{\mathrm{in}}\,P_m\varphi(0)\,dV_g
-
\int_0^t\int_M
u_m\,\partial_s(P_m\varphi)\,dV_g\,ds
\nonumber\\
&\qquad=
\int_0^t\int_M
\Bigl(
W'(u_m)
-\mu\Delta_gu_m
-\nu a(\Delta_gu_m)|\nabla_gu_m|_g
\Bigr)
\Delta_g(P_m\varphi)\,dV_g\,ds
\nonumber\\
&\qquad\quad+
\int_0^t\int_M
\lambda(x)
\bigl(
b(u_{\mathrm{ref}})-b(u_m)
\bigr)
P_m\varphi\,dV_g\,ds.
\end{align}

For fixed \(\varphi\in C^1([0,T];\mathcal X)\), the spectral projections
satisfy
\[
P_m\varphi\longrightarrow\varphi
\qquad
\text{strongly in }C^1([0,T];\mathcal X).
\]
In particular,
\[
P_m\varphi\longrightarrow\varphi
\quad\text{in }\;
C([0,T];H_D^2(M))
\cap
C([0,T];L^q(M)),
\]
\[
\Delta_g(P_m\varphi)
\longrightarrow
\Delta_g\varphi
\qquad
\text{in }C([0,T];L^2(M)),
\]
and
\[
\partial_t(P_m\varphi)
\longrightarrow
\partial_t\varphi
\qquad
\text{in }C([0,T];L^2(M)).
\]
Moreover,
\[
P_mu_{\mathrm{in}}
\longrightarrow
u_{\mathrm{in}}
\qquad
\text{strongly in }L^2(M).
\]

To pass to the endpoint term at time \(t\), we use compactness in time.
Since
\[
L^2(M)\Subset\mathcal X',
\]
the sequence \((u_m)\) is bounded in
\(L^\infty(0,T;L^2(M))\), and
\((\partial_tu_m)\) is bounded in
\(L^{q'}(0,T;\mathcal X')\) with \(q'>1\), the
Aubin--Lions--Simon compactness theorem yields, after passing to a further
subsequence if necessary,
\begin{equation}
\label{strong-C-Xdual}
u_m\longrightarrow u
\qquad
\text{strongly in }C([0,T];\mathcal X').
\end{equation}
Consequently,
\[
\int_Mu_m(t)P_m\varphi(t)\,dV_g
\longrightarrow
\int_Mu(t)\varphi(t)\,dV_g
\]
for every \(t\in[0,T]\). The initial and time-derivative terms in
\eqref{weak-form-m-integrated} also converge to their expected limits.

For the nonlinear chemical-potential terms, we already know that
\[
W'(u_m)\longrightarrow W'(u)
\qquad
\text{strongly in }L^2(Q_T),
\]
and, by \eqref{strong-laplacian-final},
\[
\Delta_gu_m\longrightarrow\Delta_gu
\qquad
\text{strongly in }L^2(Q_T).
\]
The latter convergence implies
\[
a(\Delta_gu_m)\longrightarrow a(\Delta_gu)
\qquad
\text{in measure on }Q_T.
\]
Since \(a\) is bounded, it follows that
\[
a(\Delta_gu_m)\longrightarrow a(\Delta_gu)
\qquad
\text{strongly in }L^p(Q_T)
\]
for every finite \(p\geq1\).

Combining this with
\[
\nabla_gu_m\longrightarrow\nabla_gu
\qquad
\text{strongly in }L^2(Q_T),
\]
we obtain
\begin{equation}
\label{active-product-strong-limit}
a(\Delta_gu_m)|\nabla_gu_m|_g
\longrightarrow
a(\Delta_gu)|\nabla_gu|_g
\qquad
\text{strongly in }L^2(Q_T).
\end{equation}
Indeed,
\begin{align*}
&
\left\|
a(\Delta_gu_m)|\nabla_gu_m|_g
-
a(\Delta_gu)|\nabla_gu|_g
\right\|_{L^2(Q_T)}
\\
&\qquad\leq
\|a\|_{L^\infty(\mathbb R)}
\|\nabla_gu_m-\nabla_gu\|_{L^2(Q_T)}
\\
&\qquad\quad+
\left\|
\bigl(
a(\Delta_gu_m)-a(\Delta_gu)
\bigr)
|\nabla_gu|_g
\right\|_{L^2(Q_T)}.
\end{align*}
The first term tends to zero by the strong \(H^1\)-convergence. The second
tends to zero by Vitali's theorem, since
\(a(\Delta_g u_m)\to a(\Delta_g u)\) in measure, \(a\) is bounded, and
\[
\left|
\bigl(a(\Delta_g u_m)-a(\Delta_g u)\bigr)|\nabla_g u|_g
\right|^2
\leq
4\|a\|_{L^\infty(\mathbb R)}^2|\nabla_g u|_g^2
\in L^1(Q_T).
\]
Finally,
\[
b(u_m)\rightharpoonup b(u)
\qquad
\text{weakly in }L^{q'}(Q_T),
\]
whereas
\[
P_m\varphi\longrightarrow\varphi
\qquad
\text{strongly in }L^q(Q_T).
\]
Therefore,
\[
\lambda(x)
\bigl(
b(u_{\mathrm{ref}})-b(u_m)
\bigr)
P_m\varphi
\]
converges to
\[
\lambda(x)
\bigl(
b(u_{\mathrm{ref}})-b(u)
\bigr)
\varphi
\]
in the corresponding weak--strong duality.

Passing to the limit in
\eqref{weak-form-m-integrated}, we obtain
\eqref{weak-form-limit}. Together with
\[
u\in C([0,T];L^2(M)),
\qquad
u(0)=u_{\mathrm{in}},
\]
this shows that \(u\) is a weak solution of
\eqref{eq:active-CH-manifold}--\eqref{bc-navier}.
The proof is complete.
\end{proof}

\subsection{Uniqueness and stability for the smooth-classifier problem}
\label{subsec:stability-smooth-classifier}

In this subsection we establish uniqueness and continuous dependence for the
smooth-classifier problem. For prescribed initial and reference states
$u_{\mathrm{in}}$ and $u_{\mathrm{ref}}$, respectively, the equation reads
\[
\partial_tu
=
\Delta_g\Bigl(
u^3-u
-\mu\Delta_gu
-\nu a(\Delta_gu)|\nabla_gu|_g
\Bigr)
+
\lambda(x)\bigl(b(u_{\mathrm{ref}})-b(u)\bigr).
\]

We work in the surface case $d=2$, which is the principal geometric setting
motivating the model. The dimensional restriction enters through the estimate
of the cubic double-well term. More precisely, on a compact
two-dimensional manifold, Agmon's inequality \cite{Agmon} gives
\[
\|z\|_{L^\infty(M)}^2
\leq
C
\|z\|_{L^2(M)}
\|z\|_{H^2(M)},
\qquad
z\in H_D^2(M).
\]
Consequently, if
\[
z\in
L^\infty(0,T;L^2(M))
\cap
L^2(0,T;H_D^2(M)),
\]
then
\[
t\longmapsto \|z(t)\|_{L^\infty(M)}^4
\]
belongs to $L^1(0,T)$. Without the cubic double-well contribution, the
argument below does not require this two-dimensional estimate and extends
under the corresponding higher-dimensional energy bounds.

\begin{theorem}[Uniqueness and stability for the smooth-classifier problem]
\label{thm:uniqueness-beta}
Assume that $d=2$. Let $u$ and $v$ be weak solutions corresponding,
respectively, to the data pairs
\[
(u_{\mathrm{in}},u_{\mathrm{ref}})
\qquad\text{and}\qquad
(v_{\mathrm{in}},v_{\mathrm{ref}}),
\]
where
\[
u_{\mathrm{in}},v_{\mathrm{in}}\in L^2(M),
\qquad
u_{\mathrm{ref}},v_{\mathrm{ref}}\in L^q(M),
\qquad
q>6.
\]
Assume additionally that
\[
b(u_{\mathrm{ref}})-b(v_{\mathrm{ref}})
\in L^2(M).
\]
Then there exists a constant $C_T>0$ such that
\begin{align}
\label{eq:smooth-stability-estimate}
&\sup_{t\in[0,T]}
\|u(t)-v(t)\|_{L^2(M)}^2
+
\int_0^T
\|\Delta_g(u-v)(t)\|_{L^2(M)}^2\,dt
\nonumber\\
&\qquad\leq
C_T
\left(
\|u_{\mathrm{in}}-v_{\mathrm{in}}\|_{L^2(M)}^2
+
\|b(u_{\mathrm{ref}})
-b(v_{\mathrm{ref}})\|_{L^2(M)}^2
\right).
\end{align}
The constant $C_T$ depends on $T$, the geometry of $(M,g)$, the parameters
$\mu$ and $\nu$, the bound $\lambda^*$, $\|a\|_{L^\infty(\mathbb R)}$, and
the norms of $u$ and $v$ in
\[
L^\infty(0,T;L^2(M))
\cap
L^2(0,T;H_D^2(M)),
\]
but not otherwise on the particular solutions.

In particular, if
\[
u_{\mathrm{in}}=v_{\mathrm{in}}
\qquad\text{and}\qquad
u_{\mathrm{ref}}=v_{\mathrm{ref}},
\]
then $u=v$. Hence the smooth-classifier problem admits at most one weak
solution for each prescribed pair
$(u_{\mathrm{in}},u_{\mathrm{ref}})$.
\end{theorem}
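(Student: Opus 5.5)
We set $w:=u-v$ and aim for a Gronwall inequality for $\|w(t)\|_{L^2(M)}^2$, obtained by testing the difference of the two weak formulations with $w$ itself. The first task is to justify this. For each solution the right-hand side of \eqref{weak-form-manifold} defines an element of $L^2(0,T;H_D^2(M)')+L^{q'}(Q_T)$, exactly as in the proof of Lemma~\ref{lem:trace-preliminary-flux}. By density of $\mathcal X$ in $\mathcal Y=H_D^2(M)\cap L^q(M)$, the identity therefore extends to test functions in $\mathcal Y$. Since $w\in L^2(0,T;H_D^2(M))\cap L^q(Q_T)$, the chain rule for this intersection evolution space (Steklov regularisation, as used for \eqref{eq:preliminary-energy-identity}) gives $w\in C([0,T];L^2(M))$ and the identity
\[
\frac12\frac{d}{dt}\|w\|_{L^2(M)}^2
=\int_M\bigl(W'(u)-W'(v)\bigr)\Delta_gw
-\mu\|\Delta_gw\|_{L^2(M)}^2
-\nu\int_M\bigl(\chi_u-\chi_v\bigr)\Delta_gw
+\int_M\lambda\bigl(\beta_{\mathrm{ref}}-(b(u)-b(v))\bigr)w ,
\]
where $\chi_u=a(\Delta_gu)|\nabla_gu|_g$, $\chi_v=a(\Delta_gv)|\nabla_gv|_g$, and $\beta_{\mathrm{ref}}=b(u_{\mathrm{ref}})-b(v_{\mathrm{ref}})$.

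We then estimate the terms on the right one at a time.
\begin{itemize}
\item \emph{Active term.} Split it as
\[
\chi_u-\chi_v=\bigl(a(\Delta_gu)-a(\Delta_gv)\bigr)|\nabla_gu|_g+a(\Delta_gv)\bigl(|\nabla_gu|_g-|\nabla_gv|_g\bigr).
\]
Paired with $-\nu\Delta_gw=-\nu(\Delta_gu-\Delta_gv)$, the first piece is pointwise $\le0$ by monotonicity of $a$. The second piece is bounded by $\nu M_a|\nabla_gw|_g|\Delta_gw|$, using $\bigl||\nabla_g u|_g-|\nabla_g v|_g\bigr|\le|\nabla_gw|_g$. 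Young's inequality and the interpolation $\|\nabla_gw\|^2\le\varepsilon\|\Delta_gw\|^2+C_\varepsilon\|w\|^2$ then bound it by $\frac{\mu}{4}\|\Delta_gw\|^2+C\|w\|^2$. Only $\|a\|_\infty$ and monotonicity enter, never $a'$.
\item \emph{Anchoring term.} The contribution $-\lambda(b(u)-b(v))w$ is $\le0$ by monotonicity of $b$. The data term satisfies $\int_M\lambda\beta_{\mathrm{ref}}w\le\lambda^*\|\beta_{\mathrm{ref}}\|_{L^2}\|w\|_{L^2}\le C(\|\beta_{\mathrm{ref}}\|_{L^2}^2+\|w\|_{L^2}^2)$.
\item \emph{Double-well term.} The linear part gives $\int_M -w\Delta_gw=\|\nabla_gw\|^2$, which is handled by interpolation as above. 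For the cubic part, $u^3-v^3=w(u^2+uv+v^2)$, so
\[
\Bigl|\int_M(u^3-v^3)\Delta_gw\Bigr|\le C\bigl(\|u\|_\infty^2+\|v\|_\infty^2\bigr)\|w\|\,\|\Delta_gw\|\le\frac{\mu}{4}\|\Delta_gw\|^2+C\bigl(\|u\|_\infty^4+\|v\|_\infty^4\bigr)\|w\|^2 .
\]
\end{itemize}

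Collecting these bounds gives
\[
\frac{d}{dt}\|w\|^2+\mu\|\Delta_gw\|^2\le k(t)\|w\|^2+C\|\beta_{\mathrm{ref}}\|_{L^2}^2,
\qquad
k(t)=C\bigl(1+\|u(t)\|_\infty^4+\|v(t)\|_\infty^4\bigr).
\]
By Agmon's inequality in $d=2$, $\|u\|_\infty^4\le C\|u\|_{L^2}^2\|u\|_{H^2}^2$, so $k\in L^1(0,T)$ with norm controlled by the $L^\infty(0,T;L^2(M))\cap L^2(0,T;H_D^2(M))$ norms of $u$ and $v$. Gronwall's inequality, followed by integrating the $\mu\|\Delta_gw\|^2$ term, yields \eqref{eq:smooth-stability-estimate} with $C_T=C(1+T)\exp(\|k\|_{L^1(0,T)})$. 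Uniqueness follows by taking equal data, since then $\beta_{\mathrm{ref}}=0$.

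The main obstacle is the rigorous justification of the difference energy identity. It requires pairing $\partial_tw$ with $w$ when $\partial_tw$ lies only in the sum space and not in the dual of a single Hilbert space. It also requires checking that the nonlinear pairings $\int_M(b(u)-b(v))w$ (an $L^{q'}$–$L^q$ pairing) and $\int_M(u^3-v^3)\Delta_gw$ are integrable in time. I would handle this as in the proof of Lemma~\ref{lem:trace-preliminary-flux}: mollify in time via Steklov averages, use the spatial projections $P_N$ to make the test functions admissible in $\mathcal X$, and pass to the limit using the $L^2(Q_T)$ and $L^q(Q_T)$ integrability recorded after Definition~\ref{def:weak-solution}.
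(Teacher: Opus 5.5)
Your proposal is correct and takes essentially the same route as the paper. You subtract the weak formulations, test with $w$ via Steklov regularisation, and split the active difference as $(a(\Delta_g u)-a(\Delta_g v))|\nabla_g u|_g + a(\Delta_g v)(|\nabla_g u|_g-|\nabla_g v|_g)$, so that only monotonicity and $\|a\|_{L^\infty}$ enter. You then drop the monotone anchoring term, control the cubic difference via Agmon's inequality, and close with Gronwall, all as the paper does. The only cosmetic difference is that you handle the linear part $-w$ of $W'(u)-W'(v)$ separately by interpolation, whereas the paper absorbs it into the weight $K(t)=C(1+\|u\|_{L^\infty}^2+\|v\|_{L^\infty}^2)$.
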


\begin{proof}
Set
\[
w:=u-v,
\qquad
w_{\mathrm{in}}
:=
u_{\mathrm{in}}-v_{\mathrm{in}},
\]
and
\[
B_{\mathrm{ref}}
:=
b(u_{\mathrm{ref}})-b(v_{\mathrm{ref}}).
\]
Subtracting the weak formulations satisfied by $u$ and $v$, we obtain
\begin{align}
\label{eq:diff-weak-manifold}
\langle\partial_tw,\varphi\rangle
&=
\int_M
\bigl(W'(u)-W'(v)\bigr)
\Delta_g\varphi\,dV_g
-
\mu\int_M
\Delta_gw\,\Delta_g\varphi\,dV_g
\nonumber\\
&\quad
-
\nu\int_M
\Bigl(
a(\Delta_gu)|\nabla_gu|_g
-
a(\Delta_gv)|\nabla_gv|_g
\Bigr)
\Delta_g\varphi\,dV_g
\nonumber\\
&\quad
+
\int_M
\lambda(x)
\Bigl(
B_{\mathrm{ref}}
-
\bigl(b(u)-b(v)\bigr)
\Bigr)
\varphi\,dV_g.
\end{align}

We first record that the right-hand side defines a functional on
\[
V:=H_D^2(M).
\]
Indeed, since $d=2$,
\[
V\hookrightarrow L^\infty(M)
\]
continuously. For the double-well term,
\[
W'(u)-W'(v)
=
w\bigl(u^2+uv+v^2\bigr)-w,
\]
and therefore
\[
\|W'(u)-W'(v)\|_{L^2(M)}
\leq
C
\Bigl(
1+\|u\|_{L^\infty(M)}^2
+\|v\|_{L^\infty(M)}^2
\Bigr)
\|w\|_{L^2(M)}.
\]
Consequently,
\begin{align*}
\left|
\int_M
\bigl(W'(u)-W'(v)\bigr)
\Delta_g\varphi\,dV_g
\right|
&\leq
C
\Bigl(
1+\|u\|_{L^\infty}^2
+\|v\|_{L^\infty}^2
\Bigr)
\\
&\qquad\qquad\times
\|w\|_{L^2}
\|\varphi\|_V.
\end{align*}
For the active term, the boundedness of $a$ gives
\[
\left|
a(\Delta_gu)|\nabla_gu|_g
-
a(\Delta_gv)|\nabla_gv|_g
\right|
\leq
\|a\|_{L^\infty}
\bigl(
|\nabla_gu|_g+|\nabla_gv|_g
\bigr),
\]
and hence
\begin{align*}
&
\left|
\int_M
\Bigl(
a(\Delta_gu)|\nabla_gu|_g
-
a(\Delta_gv)|\nabla_gv|_g
\Bigr)
\Delta_g\varphi\,dV_g
\right|
\\
&\qquad\leq
C\|a\|_{L^\infty}
\bigl(
\|\nabla_gu\|_{L^2}
+
\|\nabla_gv\|_{L^2}
\bigr)
\|\varphi\|_V.
\end{align*}

For the reference-state contribution,
\[
\left|
\int_M
\lambda(x)B_{\mathrm{ref}}\varphi\,dV_g
\right|
\leq
\lambda^*
\|B_{\mathrm{ref}}\|_{L^2(M)}
\|\varphi\|_{L^2(M)}.
\]
Moreover, since
\[
b(u),b(v)\in L^{q'}(M),
\qquad
V\hookrightarrow L^q(M),
\]
we have
\[
\left|
\int_M
\lambda(x)
\bigl(b(u)-b(v)\bigr)\varphi\,dV_g
\right|
\leq
C\lambda^*
\|b(u)-b(v)\|_{L^{q'}(M)}
\|\varphi\|_V.
\]
Thus the difference equation may be interpreted in $V'$.

The energy calculation below is justified by Steklov regularisation in
time. More precisely, one tests the time-averaged difference equation by
the corresponding time average of $w$, integrates over $(0,t)$, and then
lets the averaging parameter tend to zero. This is legitimate because
\[
w\in
L^2(0,T;H_D^2(M))
\cap
L^q(Q_T),
\]
and every term in \eqref{eq:diff-weak-manifold} is integrable against $w$.
We consequently obtain
\[
\int_0^t
\langle\partial_tw,w\rangle\,ds
=
\frac12\|w(t)\|_{L^2(M)}^2
-
\frac12\|w_{\mathrm{in}}\|_{L^2(M)}^2.
\]

Equivalently, testing \eqref{eq:diff-weak-manifold} by $w$ in this
regularised sense yields, for a.e. $t\in(0,T)$,
\begin{align}
\label{eq:energy-w-manifold}
\frac12\frac{d}{dt}\|w(t)\|_{L^2(M)}^2
+
\mu\|\Delta_gw(t)\|_{L^2(M)}^2
&=
\int_M
\bigl(W'(u)-W'(v)\bigr)
\Delta_gw\,dV_g
\nonumber\\
&\quad
-
\nu\int_M
\Bigl(
a(\Delta_gu)|\nabla_gu|_g
-
a(\Delta_gv)|\nabla_gv|_g
\Bigr)
\Delta_gw\,dV_g
\nonumber\\
&\quad
+
\int_M
\lambda(x)B_{\mathrm{ref}}w\,dV_g
\nonumber\\
&\quad
-
\int_M
\lambda(x)
\bigl(b(u)-b(v)\bigr)w\,dV_g.
\end{align}

We now estimate the terms on the right-hand side.

For the double-well contribution,
\[
|W'(u)-W'(v)|
\leq
C
\bigl(
1+|u|^2+|v|^2
\bigr)|w|.
\]
Hence
\[
\left|
\int_M
\bigl(W'(u)-W'(v)\bigr)
\Delta_gw\,dV_g
\right|
\leq
K(t)
\|w\|_{L^2(M)}
\|\Delta_gw\|_{L^2(M)},
\]
where
\[
K(t)
:=
C\Bigl(
1+\|u(t)\|_{L^\infty(M)}^2
+\|v(t)\|_{L^\infty(M)}^2
\Bigr).
\]
Young's inequality gives
\begin{equation}
\label{eq:double-well-stability-bound}
\left|
\int_M
\bigl(W'(u)-W'(v)\bigr)
\Delta_gw\,dV_g
\right|
\leq
\frac{\mu}{4}
\|\Delta_gw\|_{L^2(M)}^2
+
C K(t)^2
\|w\|_{L^2(M)}^2.
\end{equation}

By Agmon's inequality,
\[
\|u(t)\|_{L^\infty(M)}^4
\leq
C
\|u(t)\|_{L^2(M)}^2
\|u(t)\|_{H^2(M)}^2,
\]
and similarly for $v$. Since
\[
u,v\in
L^\infty(0,T;L^2(M))
\cap
L^2(0,T;H_D^2(M)),
\]
we conclude that
\[
K^2\in L^1(0,T).
\]

We next estimate the active contribution. We decompose
\begin{align*}
&a(\Delta_gu)|\nabla_gu|_g
-
a(\Delta_gv)|\nabla_gv|_g
\\
&\qquad=
\bigl(
a(\Delta_gu)-a(\Delta_gv)
\bigr)|\nabla_gu|_g
+
a(\Delta_gv)
\bigl(
|\nabla_gu|_g-|\nabla_gv|_g
\bigr).
\end{align*}
Since $a$ is non-decreasing and
\[
\Delta_gu-\Delta_gv=\Delta_gw,
\]
we have
\[
\bigl(
a(\Delta_gu)-a(\Delta_gv)
\bigr)\Delta_gw
\geq0.
\]
Because $|\nabla_gu|_g\geq0$, it follows that
\begin{equation}
\label{eq:favourable-classifier-term}
-\nu
\int_M
\bigl(
a(\Delta_gu)-a(\Delta_gv)
\bigr)
|\nabla_gu|_g
\Delta_gw\,dV_g
\leq0.
\end{equation}

For the remaining part, we use
\[
\left|
|\nabla_gu|_g-|\nabla_gv|_g
\right|
\leq
|\nabla_gw|_g
\]
and the boundedness of $a$ to obtain
\begin{align*}
&-\nu
\int_M
a(\Delta_gv)
\bigl(
|\nabla_gu|_g-|\nabla_gv|_g
\bigr)
\Delta_gw\,dV_g
\\
&\qquad\leq
\nu
\|a\|_{L^\infty}
\|\nabla_gw\|_{L^2(M)}
\|\Delta_gw\|_{L^2(M)}.
\end{align*}
Combining Young's inequality with
\[
\|\nabla_gw\|_{L^2(M)}^2
\leq
\varepsilon
\|\Delta_gw\|_{L^2(M)}^2
+
C_\varepsilon
\|w\|_{L^2(M)}^2,
\]
and choosing $\varepsilon>0$ sufficiently small, we infer
\begin{align}
\label{eq:active-stability-bound}
&-\nu
\int_M
\Bigl(
a(\Delta_gu)|\nabla_gu|_g
-
a(\Delta_gv)|\nabla_gv|_g
\Bigr)
\Delta_gw\,dV_g
\nonumber\\
&\qquad\leq
\frac{\mu}{4}
\|\Delta_gw\|_{L^2(M)}^2
+
C\|w\|_{L^2(M)}^2.
\end{align}

The state-dependent anchoring contribution is dissipative. Since $b$ is
non-decreasing,
\[
\bigl(b(u)-b(v)\bigr)(u-v)\geq0,
\]
and therefore
\begin{equation}
\label{eq:anchoring-dissipation}
-\int_M
\lambda(x)
\bigl(b(u)-b(v)\bigr)w\,dV_g
\leq0.
\end{equation}

The reference-state contribution satisfies
\begin{align}
\label{eq:reference-stability-bound}
\int_M
\lambda(x)B_{\mathrm{ref}}w\,dV_g
&\leq
\lambda^*
\|B_{\mathrm{ref}}\|_{L^2(M)}
\|w\|_{L^2(M)}
\nonumber\\
&\leq
C\|B_{\mathrm{ref}}\|_{L^2(M)}^2
+
C\|w\|_{L^2(M)}^2.
\end{align}

Combining
\eqref{eq:double-well-stability-bound},
\eqref{eq:active-stability-bound},
\eqref{eq:anchoring-dissipation}, and
\eqref{eq:reference-stability-bound}
in \eqref{eq:energy-w-manifold}, we obtain
\begin{equation}
\label{eq:smooth-stability-differential}
\frac{d}{dt}\|w(t)\|_{L^2(M)}^2
+
\mu\|\Delta_gw(t)\|_{L^2(M)}^2
\leq
C\bigl(1+K(t)^2\bigr)
\|w(t)\|_{L^2(M)}^2
+
C\|B_{\mathrm{ref}}\|_{L^2(M)}^2.
\end{equation}
Since $K^2\in L^1(0,T)$, Gronwall's inequality gives
\[
\sup_{t\in[0,T]}
\|w(t)\|_{L^2(M)}^2
\leq
C_T
\left(
\|w_{\mathrm{in}}\|_{L^2(M)}^2
+
\|B_{\mathrm{ref}}\|_{L^2(M)}^2
\right).
\]
Integrating \eqref{eq:smooth-stability-differential} over $(0,T)$ and using
the preceding estimate yields
\[
\int_0^T
\|\Delta_gw(t)\|_{L^2(M)}^2\,dt
\leq
C_T
\left(
\|w_{\mathrm{in}}\|_{L^2(M)}^2
+
\|B_{\mathrm{ref}}\|_{L^2(M)}^2
\right).
\]
Together, these two bounds give
\eqref{eq:smooth-stability-estimate}.

Finally, if
\[
u_{\mathrm{in}}=v_{\mathrm{in}}
\qquad\text{and}\qquad
u_{\mathrm{ref}}=v_{\mathrm{ref}},
\]
then
\[
w_{\mathrm{in}}=0,
\qquad
B_{\mathrm{ref}}=0.
\]
Estimate \eqref{eq:smooth-stability-estimate} therefore gives $w=0$, and
hence $u=v$. This proves uniqueness.
\end{proof}

\section{The steep-classifier limit and the sign-graph inclusion}
\label{sec:steep-classifier}

Fix \(T>0\) and an admissible data pair
\((u_{\mathrm{in}},u_{\mathrm{ref}})\). We retain the standard
double-well potential
\[
    W(u)=\frac14(1-u^2)^2,
    \qquad
    W'(u)=u^3-u.
\]
We analyse the singular regime in which the smooth classifier becomes
increasingly steep. The essential issue is its behaviour on the zero set of
the limiting Laplacian, where the scale \(N\Delta_g u_N\) need not vanish
and a multivalued constitutive law is therefore required.

For \(N\in\mathbb N\), set
\begin{equation}
    a_N(r):=\frac{2}{\pi}\arctan(Nr),
    \qquad r\in\mathbb R.
    \label{eq:steep-classifier}
\end{equation}
Then $a_N$ is smooth, odd, non-decreasing, $|a_N|\leq 1$, and
\[
    a_N(r)\longrightarrow \operatorname{sgn}_0(r)
    \qquad\text{for every }r\in\mathbb R,
\]
where
\[
    \operatorname{sgn}_0(r):=
    \begin{cases}
        1, & r>0,\\
        0, & r=0,\\
       -1, & r<0.
    \end{cases}
\]
Thus $\operatorname{sgn}_0$ is the pointwise limit when the argument
$r$ is fixed. In the singular problem, however, the arguments
$\Delta_g u_N$ vary simultaneously with $N$. The sequentially closed
constitutive limit is therefore the maximal monotone graph
\begin{equation}
    \operatorname{Sign}(r):=
    \begin{cases}
        \{1\}, & r>0,\\
        [-1,1], & r=0,\\
        \{-1\}, & r<0.
    \end{cases}
    \label{eq:maximal-sign-graph}
\end{equation}
Equivalently,
\begin{equation}
    \xi\in \operatorname{Sign}(r)
    \quad\Longleftrightarrow\quad
    |\xi|\leq 1
    \ \text{ and }\ 
    \xi r=|r|.
\label{eq:sign-graph-equivalent}
\end{equation}
Moreover, \(\operatorname{Sign}=\partial|\cdot|\); hence
\(\operatorname{Sign}\) is a maximal monotone graph; see
\cite{Brezis1973}.
The graph is monotone: if
$\xi\in\operatorname{Sign}(r)$ and
$\eta\in\operatorname{Sign}(s)$, then
\begin{equation}
    (\xi-\eta)(r-s)\geq 0.
    \label{eq:sign-graph-monotonicity}
\end{equation}
For each $N$, let $u_N$ be a weak solution corresponding to the
classifier $a_N$ and to the fixed data pair
$(u_{\mathrm{in}},u_{\mathrm{ref}})$, that is,
\begin{equation}
\begin{aligned}
    \partial_t u_N
    ={}& \Delta_g\Bigl(
        W'(u_N)-\mu\Delta_g u_N
        -\nu a_N(\Delta_g u_N)|\nabla_g u_N|_g
        \Bigr)\\
    &\quad +\lambda(x)\bigl(b(u_{\mathrm{ref}})-b(u_N)\bigr)
    \qquad\text{in }Q_T.
\end{aligned}
\label{eq:steep-smooth-problem}
\end{equation}
with
\[
    u_N(0,\cdot)=u_{\mathrm{in}}.
\]
The boundary realization is the one specified in Section~\ref{sec:model}.

\begin{definition}[Weak solution of the sign-graph problem]
\label{def:sign-graph-solution}
A pair $(u,\xi)$ is called a weak solution of the limiting
sign-graph problem if
\[
\begin{aligned}
    &u\in L^\infty(0,T;L^2(M))
       \cap L^2(0,T;H_D^2(M))
       \cap L^q(Q_T),\\
    &\partial_tu\in L^{q'}(0,T;\mathcal X'),
    \qquad
    \xi\in L^\infty(Q_T),
\end{aligned}
\]
$u\in C([0,T];L^2(M))$, $u(0)=u_{\mathrm{in}}$, and
\begin{equation}
    \xi(t,x)\in\operatorname{Sign}(\Delta_g u(t,x))
    \qquad\text{for a.e. }(t,x)\in Q_T.
    \label{eq:graph-inclusion}
\end{equation}
Moreover, for every $\phi\in\mathcal X$ and for a.e. $t\in(0,T)$,
\begin{equation}
\begin{aligned}
    \langle \partial_tu(t),\phi\rangle_{\mathcal X',\mathcal X}
    ={}& \int_M
       \Bigl(
       W'(u)-\mu\Delta_g u
       -\nu\xi|\nabla_g u|_g
       \Bigr)\Delta_g\phi\,dV_g\\
    &+\int_M
       \lambda(x)\bigl(b(u_{\mathrm{ref}})-b(u)\bigr)
       \phi\,dV_g.
\end{aligned}
\label{eq:weak-sign-graph}
\end{equation}
\end{definition}

\begin{lemma}[Closure of the steep classifiers]
\label{lem:steep-graph-closure}
Let $N_k\to\infty$, let $z_k\to z$ strongly in $L^2(Q_T)$, and set
\[
    \xi_k:=a_{N_k}(z_k).
\]
Assume that, after extraction,
\[
    \xi_k\stackrel{*}{\rightharpoonup}\xi
    \qquad\text{weakly-* in }L^\infty(Q_T).
\]
Then
\[
    \xi(t,x)\in\operatorname{Sign}(z(t,x))
    \qquad\text{for a.e. }(t,x)\in Q_T.
\]
\end{lemma}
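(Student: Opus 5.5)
The plan is to use the characterisation \eqref{eq:sign-graph-equivalent}: it suffices to show that $|\xi|\le1$ a.e. and $\xi z=|z|$ a.e. in $Q_T$. The bound $|\xi|\le1$ is immediate. Each $\xi_k$ lies in the closed convex set $\{\eta\in L^\infty(Q_T):|\eta|\le1\text{ a.e.}\}$, and this set is weakly-* closed, so $\xi$ belongs to it as well.

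For the identity I would argue in two steps. First I establish the inequality $\xi z\le|z|$. Since $z\in L^2(Q_T)$, $Q_T$ has finite measure, and $|\xi|\le1$, we have $\xi z\in L^2\subset L^1$, so the inequality follows pointwise from $|\xi|\le1$. Second, I prove the reverse integrated inequality
\[
\int_{Q_T}\psi\,\xi z\ge\int_{Q_T}\psi|z|
\]
for every nonnegative $\psi\in L^\infty(Q_T)$. Together these give $\xi z=|z|$ a.e.

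To obtain the second inequality, I would write $\xi_kz_k=\xi_kz+\xi_k(z_k-z)$. The second term tends to zero in $L^1$, because $|\xi_k|\le1$ and $z_k\to z$ in $L^2$. The first term, tested against $\psi$, converges to $\int\psi\xi z$ by weak-* convergence, since $\psi z\in L^1$. Hence $\int\psi\xi_kz_k\to\int\psi\xi z$.

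On the other hand, I would compare $a_N(r)r$ with $|r|$ using the elementary estimate
\[
|r|-a_N(r)r=|r|\Bigl(1-\tfrac{2}{\pi}\arctan(N|r|)\Bigr)=\tfrac{2}{\pi}|r|\arctan\!\Bigl(\tfrac{1}{N|r|}\Bigr)\le\tfrac{2}{\pi N},
\]
which uses $\arctan s\le s$. This gives
\[
\int\psi\,\xi_kz_k\ge\int\psi|z_k|-\tfrac{2}{\pi N_k}\|\psi\|_{L^1}.
\]
Moreover $\int\psi|z_k|\to\int\psi|z|$, by strong $L^2$ convergence of $z_k$. Passing to the limit yields the desired inequality.

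The main point, though not really an obstacle, is this uniform bound $0\le|r|-a_N(r)r\le 2/(\pi N)$. It is what lets the argument handle the zero set of $z$, where $N_kz_k$ may stay bounded, without needing any pointwise control of $\xi_k$ there. Everything else is the standard weak–strong product argument.
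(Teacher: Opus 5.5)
Your proof is correct, but it takes a different route from the paper's. The paper argues pointwise. It passes to a subsequence with $z_k\to z$ a.e. and notes that on $E_m^+=\{z\ge m^{-1}\}$ one eventually has $z_k\ge(2m)^{-1}$, so $a_{N_k}(z_k)\to1$ a.e. there. Dominated convergence upgrades this to strong $L^p(E_m^+)$ convergence, and uniqueness of weak-* limits gives $\xi=1$ on $E_m^+$. Exhausting $\{z>0\}$ (and symmetrically $\{z<0\}$) leaves only $|\xi|\le1$ on $\{z=0\}$. You instead use the characterisation $|\xi|\le1$, $\xi z=|z|$ from \eqref{eq:sign-graph-equivalent}, and you close the identity with a weak--strong product limit plus the uniform defect bound $0\le|r|-a_N(r)r\le 2/(\pi N)$. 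Every step you list checks out, including the reduction of $\int\psi(\xi z-|z|)\ge0$ for all $\psi\ge0$ to the a.e. identity. Your route needs no a.e. subsequence or level-set exhaustion, and it treats $\{z=0\}$ and $\{z\neq0\}$ in one stroke. It is the standard Fenchel-equality mechanism for weak--strong closedness of maximal monotone graphs, and it puts to work the estimate the paper only mentions in passing in the numerical section. The paper's route, in turn, uses only that $a_N\to\pm1$ away from $0$, with no rate. You could also drop the dependence on the $\arctan$-specific rate. Replace your bound by $|r|-a_N(r)r\le\delta+|r|\bigl(1-a_N(\delta)\bigr)$, valid for any odd non-decreasing $a_N$ with $|a_N|\le1$, use the uniform $L^1$ bound on $z_k$, and let $\delta\downarrow0$ after $k\to\infty$.
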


\begin{proof}
Since $\|\xi_k\|_{L^\infty(Q_T)}\leq1$ and the closed unit ball of
$L^\infty(Q_T)$ is weakly-* closed, we have
\[
\|\xi\|_{L^\infty(Q_T)}\leq1.
\] Passing to a further subsequence, we may
assume that $z_k\to z$ a.e. in $Q_T$. 
Fix $m\in\mathbb N$ and set
\[
    E_m^+:=\{(t,x)\in Q_T:z(t,x)\geq m^{-1}\}.
\]
For a.e. $(t,x)\in E_m^+$, one has $z_k(t,x)\geq (2m)^{-1}$ for all
sufficiently large $k$. Hence
\[
    N_k z_k(t,x)\longrightarrow +\infty,
    \qquad
    a_{N_k}(z_k(t,x))\longrightarrow 1.
\]
By dominated convergence,
$\xi_k\to 1$ strongly in $L^p(E_m^+)$ for every finite $p$. The
weak-* limit therefore satisfies $\xi=1$ a.e. on $E_m^+$. Taking the
union over $m$ gives $\xi=1$ a.e. on $\{z>0\}$. The same argument
shows that $\xi=-1$ a.e. on $\{z<0\}$. On $\{z=0\}$ we only know
$|\xi|\leq 1$, which is exactly the graph relation
$\xi\in\operatorname{Sign}(z)$.
\end{proof}

\begin{theorem}[Steep-classifier limit]
\label{thm:steep-classifier-limit}
Assume that
\[
u_{\mathrm{in}}\in L^2(M),
\qquad
u_{\mathrm{ref}}\in L^q(M),
\qquad q>6,
\]
and that $\lambda$ satisfies the standing assumptions. Let $u_N$ be weak
solutions of \eqref{eq:steep-smooth-problem}. Then there exist a
subsequence, not relabelled, a function $u$, and a function
$\xi\in L^\infty(Q_T)$ such that
\begin{align}
    u_N &\stackrel{*}{\rightharpoonup} u
    &&\text{in }L^\infty(0,T;L^2(M)),
    \label{eq:steep-weakstar-L2}\\
    u_N &\rightharpoonup u
    &&\text{in }L^2(0,T;H_D^2(M)),
    \label{eq:steep-weak-H2}\\
    u_N &\longrightarrow u
    &&\text{in }L^2(0,T;H^1(M)),
    \label{eq:steep-strong-H1}\\
    \Delta_g u_N &\longrightarrow \Delta_g u
    &&\text{in }L^2(Q_T),
    \label{eq:steep-strong-Laplacian}\\
    a_N(\Delta_g u_N)&\stackrel{*}{\rightharpoonup}\xi
    &&\text{in }L^\infty(Q_T).
    \label{eq:steep-classifier-weakstar}
\end{align}
In particular,
\begin{equation}
    u_N\longrightarrow u
    \qquad\text{strongly in }L^2(0,T;H_D^2(M)).
    \label{eq:steep-strong-H2}
\end{equation}
Moreover,
\begin{equation}
    \xi\in\operatorname{Sign}(\Delta_g u)
    \qquad\text{a.e. in }Q_T,
    \label{eq:steep-limit-graph}
\end{equation}
and
\begin{equation}
    a_N(\Delta_g u_N)|\nabla_g u_N|_g
    \rightharpoonup
    \xi|\nabla_g u|_g
    \qquad\text{weakly in }L^2(Q_T).
    \label{eq:steep-active-product-limit}
\end{equation}
Consequently, $(u,\xi)$ is a weak solution of the sign-graph problem
in the sense of Definition~\ref{def:sign-graph-solution}.
\end{theorem}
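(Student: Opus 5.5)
The plan is to repeat the argument of Theorem~\ref{thm:existence-weak-solution}, with the index $N$ of the classifier playing the role that the Galerkin index $m$ played there. The key observation is that every estimate in Proposition~\ref{prop:Galerkin-existence} used only $|a|\le M_a$ and $a(r)r\ge0$, and here $|a_N|\le1$ uniformly in $N$. A complication is that the $u_N$ are given weak solutions, not Galerkin approximants. I would therefore first derive the energy inequality for each $u_N$ directly, justified by Steklov averaging as described after Definition~\ref{def:weak-solution}. Alternatively, I would take each $u_N$ to be the limit produced by Theorem~\ref{thm:existence-weak-solution}; by Theorem~\ref{thm:uniqueness-beta} this is no loss when $d=2$. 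Either route yields bounds that are uniform in $N$: $u_N$ bounded in $L^\infty(0,T;L^2)\cap L^2(0,T;H_D^2)\cap L^q(Q_T)$, and $\partial_tu_N$ bounded in $L^{q'}(0,T;\mathcal X')$. From these I extract weak and weak-$*$ limits, apply Aubin--Lions (Theorem~\ref{AuLi}) to get strong convergence in $L^2(0,T;H^1)$, and interpolate to get strong $L^6(Q_T)$ convergence, hence $W'(u_N)\to W'(u)$ in $L^2(Q_T)$. I also extract $a_N(\Delta_gu_N)\stackrel{*}{\rightharpoonup}\xi$ and $\chi_N:=a_N(\Delta_gu_N)|\nabla_gu_N|_g\rightharpoonup\chi$ in $L^2(Q_T)$. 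The argument of Lemma~\ref{lem:trace-preliminary-flux} then goes through verbatim (for true weak solutions one tests directly with $\varphi$) and gives the preliminary equation with flux $\chi$. It also gives $u\in C([0,T];L^2)$ with $u(0)=u_{\rm in}$, so Lemma~\ref{lem:smooth-approximation} supplies regularisations $u^\zeta$.

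The main step is strong convergence of $\Delta_gu_N$, and I would obtain it by the one-sided comparison. I test the $u_N$-equation with $w_N^\zeta:=u_N-u^\zeta$. This is admissible after Steklov averaging, since $u^\zeta\in C^1([0,T];\mathcal X)$ and $u_N$ has the energy regularity. I then use two pointwise inequalities with the same classifier $a_N$ on both sides:
\[
\bigl(a_N(\Delta_gu_N)-a_N(\Delta_gu^\zeta)\bigr)\bigl(\Delta_gu_N-\Delta_gu^\zeta\bigr)|\nabla_gu_N|_g\ge0,
\]
and the analogous inequality for $b$. This yields inequality \eqref{comparison-single-3} with $P_m$ removed and $a$ replaced by $a_N$. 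The active remainder is $-\nu\int a_N(\Delta_gu^\zeta)|\nabla_gu_N|_g\Delta_gw_N^\zeta$.

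The obstacle is that $a_N(\Delta_gu^\zeta)$ no longer converges to a known limit as $N\to\infty$. I would not identify that limit. Along a further subsequence, $a_N(\Delta_gu^\zeta)\stackrel{*}{\rightharpoonup}\theta^\zeta$ with $|\theta^\zeta|\le1$. Since $|\nabla_gu_N|_g\to|\nabla_gu|_g$ strongly in $L^2$, the product $a_N(\Delta_gu^\zeta)|\nabla_gu_N|_g$ is bounded in $L^\infty\cdot L^2$. Convergence of the product paired against the weakly convergent $\Delta_gw_N^\zeta$ is the delicate point, because weak times weak need not converge. I would sidestep it with the uniform bound
\[
\limsup_N\Bigl|\int a_N(\Delta_gu^\zeta)|\nabla_gu_N|_g\Delta_gw_N^\zeta\Bigr|\le \limsup_N\Bigl(\|\nabla_gu_N\|_{L^2}\|\Delta_g(u_N-u)\|_{L^2}\Bigr)+\|\nabla_gu\|_{L^2}\|\Delta_g(u-u^\zeta)\|_{L^2}.
\]
The first term is not small, so I would first try to absorb it. Setting $D:=\limsup_N\|\Delta_g(u_N-u)\|_{L^2}$, the left side is at least $\mu(D-\|\Delta_g(u-u^\zeta)\|)^2$, while the right side is at most $\nu C D+o_\zeta(1)$. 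This absorption fails in general, so a better split is needed. I would instead write $\Delta_gw_N^\zeta=\Delta_g(u_N-u)+\Delta_g(u-u^\zeta)$. The second piece is $o_\zeta(1)$ uniformly in $N$ by the bound $|a_N|\le1$. For the first piece I would use that $a_N(\Delta_gu^\zeta)$ is a fixed function composed with the smooth $\Delta_gu^\zeta$. Egorov's theorem then shows $a_N(\Delta_gu^\zeta)\to\operatorname{sgn}_0(\Delta_gu^\zeta)$ strongly in every $L^p$ with $p<\infty$, because the pointwise limit exists everywhere. Consequently $a_N(\Delta_gu^\zeta)|\nabla_gu_N|_g$ converges strongly in $L^2$ (by Vitali, using the $L^2$-convergent dominating factor), and its pairing with $\Delta_g(u_N-u)\rightharpoonup0$ tends to zero. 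This resolves the obstacle. All remaining terms converge exactly as in Step 3 of Theorem~\ref{thm:existence-weak-solution}, giving $\limsup_N\|\Delta_g w_N^\zeta\|^2\le\mu^{-1}r_\zeta\to0$, and hence \eqref{eq:steep-strong-Laplacian} and \eqref{eq:steep-strong-H2} follow by the elliptic estimate.

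Finally, Lemma~\ref{lem:steep-graph-closure} applied with $z_k=\Delta_gu_{N_k}$ gives $\xi\in\operatorname{Sign}(\Delta_gu)$. For the active product, I write $\chi_N=a_N(\Delta_gu_N)|\nabla_gu|_g+a_N(\Delta_gu_N)(|\nabla_gu_N|_g-|\nabla_gu|_g)$. The second term tends to zero strongly in $L^2$ since $|a_N|\le1$. The first converges weakly to $\xi|\nabla_gu|_g$ in $L^2$, because $\xi_N\stackrel{*}{\rightharpoonup}\xi$ and $|\nabla_gu|_g\psi\in L^1$ for every $\psi\in L^2$. Hence $\chi=\xi|\nabla_gu|_g$, and the preliminary equation becomes \eqref{eq:weak-sign-graph}.
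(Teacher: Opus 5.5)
Your proposal is correct and follows the paper's proof essentially step for step. The common steps are uniform-in-$N$ bounds from $|a_N|\le 1$, the one-sided comparison obtained by testing with $u_N-u^\zeta$ and using monotonicity of $a_N$ and $b$, strong $L^p$ convergence $a_N(\Delta_g u^\zeta)\to\operatorname{sgn}_0(\Delta_g u^\zeta)$ for fixed $\zeta$ to take the limsup, and Lemma~\ref{lem:steep-graph-closure} plus the same product split to identify $\xi$ and the active flux. Your detour through weak-$*$ limits $\theta^\zeta$ and the failed absorption is unnecessary: the argument $\Delta_g u^\zeta$ does not depend on $N$, which is the observation the paper uses directly. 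Your extra care in re-deriving the energy bounds for arbitrary weak solutions $u_N$, rather than Galerkin limits, spells out a point the paper only asserts.
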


\begin{proof}
We divide the proof into four steps.

\medskip
\noindent
\emph{Step 1: uniform bounds and preliminary compactness.}
Since $\|a_N\|_{L^\infty(\mathbb R)}\leq 1$, the estimates proved for
the smooth-classifier problem are uniform in $N$. Thus
\[
    (u_N)_N
    \quad\text{is bounded in}\quad
    L^\infty(0,T;L^2(M))
    \cap L^2(0,T;H_D^2(M))
    \cap L^q(Q_T),
\]
and
\[
    (\partial_tu_N)_N
    \quad\text{is bounded in}\quad
    L^{q'}(0,T;\mathcal X').
\]
By weak compactness and the Aubin--Lions lemma, after extraction we
obtain \eqref{eq:steep-weakstar-L2}--\eqref{eq:steep-strong-H1}, and also
\[
    u_N\to u
    \qquad\text{strongly in }L^r(Q_T)
    \quad\text{for every }1\leq r<q.
\]
In particular,
\[
    W'(u_N)\to W'(u)
    \qquad\text{strongly in }L^2(Q_T).
\]

Set
\[
    \chi_N:=a_N(\Delta_g u_N)|\nabla_g u_N|_g.
\]
The sequence $(\chi_N)_N$ is bounded in $L^2(Q_T)$. Hence, after a
further extraction,
\[
    \chi_N\rightharpoonup\chi
    \qquad\text{weakly in }L^2(Q_T).
\]
After passing to a further subsequence, we also have
\[
u_N\longrightarrow u
\qquad\text{a.e. in }Q_T.
\]
The growth assumption on $b$ and the uniform $L^q(Q_T)$-bound imply
that $(b(u_N))_N$ is bounded in $L^{q'}(Q_T)$. Since $b$ is
continuous,
\[
b(u_N)\rightharpoonup b(u)
\qquad\text{weakly in }L^{q'}(Q_T).
\]

Passing to the limit in the time integrated weak formulation, before identifying $\chi$, we obtain the
preliminary equation
\[
    \partial_tu
    =\Delta_g\bigl(W'(u)-\mu\Delta_g u-\nu\chi\bigr)
    +\lambda(x)\bigl(b(u_{\mathrm{ref}})-b(u)\bigr).
\]

The argument of Lemma~\ref{lem:trace-preliminary-flux} therefore gives
\[
u\in C([0,T];L^2(M)),
\qquad
u(0)=u_{\mathrm{in}}.
\]

We may consequently apply the smooth-approximation lemma used in the
Minty argument. Thus there exist functions $u^\zeta$ such that, as
$\zeta\downarrow0$,
\begin{equation}
\begin{aligned}
    u^\zeta&\to u
    &&\text{in }L^2(0,T;H_D^2(M))\cap L^q(Q_T),\\
    u^\zeta(0)&\to u_{\mathrm{in}}
    &&\text{in }L^2(M),\\
    W'(u^\zeta)&\to W'(u)
    &&\text{in }L^2(Q_T),\\
    b(u^\zeta)&\to b(u)
    &&\text{in }L^{q'}(Q_T),
\end{aligned}
\label{eq:steep-smooth-approximation}
\end{equation}
and
\begin{equation}
    \int_0^T
    \left|
    \left\langle \partial_tu^\zeta,
    u-u^\zeta\right\rangle
    \right|dt
    \longrightarrow0.
    \label{eq:steep-time-pairing}
\end{equation}

\medskip
\noindent
\emph{Step 2: strong convergence of the Laplacians.}
Fix $\zeta>0$ and set
\[
w_N^\zeta:=u_N-u^\zeta.
\]
Let
\[
\mathcal Y:=H_D^2(M)\cap L^q(M).
\]
The uniform estimates imply that
\[
\mathfrak m_N
:=
W'(u_N)-\mu\Delta_g u_N
-\nu a_N(\Delta_g u_N)|\nabla_g u_N|_g
\]
is bounded in $L^2(Q_T)$, while
\[
\lambda(x)\bigl(b(u_{\mathrm{ref}})-b(u_N)\bigr)
\]
is bounded in $L^{q'}(Q_T)$. Hence the right-hand side of the weak
formulation defines an element of
\[
L^2(0,T;H_D^2(M)')
+
L^{q'}(0,T;L^{q'}(M)).
\]
Since $\mathcal X$ is dense in $\mathcal Y$ and the weak identity is
continuous with respect to the $\mathcal Y$-norm, it extends from
$\mathcal X$ to $\mathcal Y$. Consequently,
\[
w_N^\zeta\in
L^2(0,T;H_D^2(M))\cap L^q(Q_T)
\]
may be used as a test function after the usual Steklov regularisation
in time.

The equation itself yields, uniformly in $N$,
\[
    \partial_tu_N\in
    L^2(0,T;H_D^2(M)')+L^{q'}(Q_T),
\]
so that the duality with
$w_N^\zeta\in L^2(0,T;H_D^2(M))\cap L^q(Q_T)$ is meaningful.
Testing the equation for $u_N$ by $w_N^\zeta$, with the usual
Steklov regularisation in time, gives
\begin{align}
&\int_0^T\langle\partial_tu_N,w_N^\zeta\rangle\,dt
 +\mu\int_{Q_T}|\Delta_g w_N^\zeta|^2\,dV_gdt
\notag\\
&\quad=
 \int_{Q_T}W'(u_N)\Delta_gw_N^\zeta\,dV_gdt
 -\mu\int_{Q_T}\Delta_gu^\zeta\Delta_gw_N^\zeta\,dV_gdt
\notag\\
&\qquad
 -\nu\int_{Q_T}
 a_N(\Delta_gu_N)|\nabla_gu_N|_g
 \Delta_gw_N^\zeta\,dV_gdt
\notag\\
&\qquad
 +\int_{Q_T}\lambda(x)
 \bigl(b(u_{\mathrm{ref}})-b(u_N)\bigr)
 w_N^\zeta\,dV_gdt.
\label{eq:steep-Minty-start}
\end{align}
Since $a_N$ is non-decreasing,
\[
 \bigl(a_N(\Delta_gu_N)-a_N(\Delta_gu^\zeta)\bigr)
 \bigl(\Delta_gu_N-\Delta_gu^\zeta\bigr)\geq0.
\]
Multiplication by $|\nabla_gu_N|_g\geq0$ therefore yields
\begin{align}
&-\nu\int_{Q_T}
 a_N(\Delta_gu_N)|\nabla_gu_N|_g
 \Delta_gw_N^\zeta\,dV_gdt
\notag\\
&\qquad\leq
 -\nu\int_{Q_T}
 a_N(\Delta_gu^\zeta)|\nabla_gu_N|_g
 \Delta_gw_N^\zeta\,dV_gdt.
\label{eq:steep-active-Minty}
\end{align}
Likewise, the monotonicity of $b$ gives
\begin{align}
&\int_{Q_T}\lambda(x)
 \bigl(b(u_{\mathrm{ref}})-b(u_N)\bigr)
 w_N^\zeta\,dV_gdt
\notag\\
&\qquad\leq
 \int_{Q_T}\lambda(x)
 \bigl(b(u_{\mathrm{ref}})-b(u^\zeta)\bigr)
 w_N^\zeta\,dV_gdt.
\label{eq:steep-anchor-Minty}
\end{align}
Finally,
\[
\begin{aligned}
 \int_0^T\langle\partial_tu_N,w_N^\zeta\rangle\,dt
 ={}&\frac12\|w_N^\zeta(T)\|_{L^2(M)}^2
 -\frac12\|u_{\mathrm{in}}-u^\zeta(0)\|_{L^2(M)}^2\\
 &+\int_0^T
 \langle\partial_tu^\zeta,w_N^\zeta\rangle\,dt.
\end{aligned}
\]
Dropping the non-negative terminal term, we infer
\begin{align}
\mu\|\Delta_gw_N^\zeta\|_{L^2(Q_T)}^2
\leq{}&
 \frac12\|u_{\mathrm{in}}-u^\zeta(0)\|_{L^2(M)}^2
 -\int_0^T
 \langle\partial_tu^\zeta,w_N^\zeta\rangle\,dt
\notag\\
&+\int_{Q_T}W'(u_N)\Delta_gw_N^\zeta\,dV_gdt
\notag\\
&-\nu\int_{Q_T}
 a_N(\Delta_gu^\zeta)|\nabla_gu_N|_g
 \Delta_gw_N^\zeta\,dV_gdt
\notag\\
&-\mu\int_{Q_T}\Delta_gu^\zeta\Delta_gw_N^\zeta\,dV_gdt
\notag\\
&+\int_{Q_T}\lambda(x)
 \bigl(b(u_{\mathrm{ref}})-b(u^\zeta)\bigr)
 w_N^\zeta\,dV_gdt.
\label{eq:steep-Minty-inequality}
\end{align}

For fixed $\zeta$, one has
\begin{equation}
    a_N(\Delta_gu^\zeta)
    \longrightarrow
    \operatorname{sgn}_0(\Delta_gu^\zeta)
    \qquad\text{strongly in }L^p(Q_T)
\end{equation}
for every finite $p$. Moreover,
\begin{equation}
\begin{aligned}
&a_N(\Delta_gu^\zeta)|\nabla_gu_N|_g
\longrightarrow
\operatorname{sgn}_0(\Delta_gu^\zeta)|\nabla_gu|_g
\quad\text{strongly in }L^2(Q_T).
\end{aligned}
\label{eq:fixed-zeta-active-convergence}
\end{equation}
Indeed,
\begin{align*}
&\bigl\|
 a_N(\Delta_gu^\zeta)|\nabla_gu_N|_g
 -\operatorname{sgn}_0(\Delta_gu^\zeta)|\nabla_gu|_g
 \bigr\|_{L^2(Q_T)}\\
&\quad\leq
 \bigl\||\nabla_gu_N|_g-|\nabla_gu|_g\bigr\|_{L^2(Q_T)}
 +\bigl\|
 \bigl(a_N(\Delta_gu^\zeta)-
 \operatorname{sgn}_0(\Delta_gu^\zeta)\bigr)
 |\nabla_gu|_g
 \bigr\|_{L^2(Q_T)},
\end{align*}
where the first term tends to zero by
\eqref{eq:steep-strong-H1}, while the second tends to zero by dominated
convergence.

Since
\[
    \Delta_gw_N^\zeta
    \rightharpoonup
    \Delta_g(u-u^\zeta)
    \qquad\text{weakly in }L^2(Q_T),
\]
we may take the limit superior in
\eqref{eq:steep-Minty-inequality}. We obtain
\begin{equation}
    \mu\limsup_{N\to\infty}
    \|\Delta_g(u_N-u^\zeta)\|_{L^2(Q_T)}^2
    \leq r_\zeta,
    \label{eq:steep-limsup}
\end{equation}
where
\begin{align}
 r_\zeta:={}&
 \frac12\|u_{\mathrm{in}}-u^\zeta(0)\|_{L^2(M)}^2
 +\left|
 \int_0^T
 \langle\partial_tu^\zeta,u-u^\zeta\rangle\,dt
 \right|
\notag\\
&+\left|
 \int_{Q_T}W'(u)\Delta_g(u-u^\zeta)\,dV_gdt
 \right|
\notag\\
&+\nu\left|
 \int_{Q_T}
 \operatorname{sgn}_0(\Delta_gu^\zeta)|\nabla_gu|_g
 \Delta_g(u-u^\zeta)\,dV_gdt
 \right|
\notag\\
&+\mu\left|
 \int_{Q_T}\Delta_gu^\zeta\Delta_g(u-u^\zeta)\,dV_gdt
 \right|
\notag\\
&+\left|
 \int_{Q_T}\lambda(x)
 \bigl(b(u_{\mathrm{ref}})-b(u^\zeta)\bigr)
 (u-u^\zeta)\,dV_gdt
 \right|.
\label{eq:steep-r-zeta}
\end{align}

The first two terms in \eqref{eq:steep-r-zeta} tend to zero by
\eqref{eq:steep-smooth-approximation} and
\eqref{eq:steep-time-pairing}. Moreover,
\[
W'(u)\in L^2(Q_T),
\qquad
\Delta_g(u-u^\zeta)\to0
\quad\text{in }L^2(Q_T),
\]
so the double-well term tends to zero. The linear fourth-order term also
tends to zero, since $(\Delta_g u^\zeta)_\zeta$ is bounded in
$L^2(Q_T)$ and
\[
\left|
\int_{Q_T}
\Delta_gu^\zeta\Delta_g(u-u^\zeta)\,dV_g\,dt
\right|
\leq
\|\Delta_gu^\zeta\|_{L^2(Q_T)}
\|\Delta_g(u-u^\zeta)\|_{L^2(Q_T)}
\longrightarrow0.
\]
For the anchoring term, H\"older's inequality and
\eqref{eq:steep-smooth-approximation} give
\[
\begin{aligned}
&\left|
\int_{Q_T}\lambda(x)
\bigl(b(u_{\mathrm{ref}})-b(u^\zeta)\bigr)
(u-u^\zeta)\,dV_g\,dt
\right|\\
&\qquad\leq
\|\lambda\|_{L^\infty(M)}
\|b(u_{\mathrm{ref}})-b(u^\zeta)\|_{L^{q'}(Q_T)}
\|u-u^\zeta\|_{L^q(Q_T)}
\longrightarrow0.
\end{aligned}
\]
For the active term, it is enough to use
$|\operatorname{sgn}_0|\leq1$:
\[
\begin{aligned}
&\left|
\int_{Q_T}
\operatorname{sgn}_0(\Delta_gu^\zeta)|\nabla_gu|_g
\Delta_g(u-u^\zeta)\,dV_g\,dt
\right|\\
&\qquad\leq
\|\nabla_gu\|_{L^2(Q_T)}
\|\Delta_g(u-u^\zeta)\|_{L^2(Q_T)}
\longrightarrow0.
\end{aligned}
\]
Consequently,
\[
r_\zeta\longrightarrow0
\qquad\text{as }\zeta\downarrow0.
\]
Therefore,
\[
\limsup_{N\to\infty}
\|\Delta_g(u_N-u)\|_{L^2(Q_T)}
\leq
\mu^{-1/2}r_\zeta^{1/2}
+\|\Delta_g(u^\zeta-u)\|_{L^2(Q_T)}.
\]
Letting $\zeta\downarrow0$ yields
\eqref{eq:steep-strong-Laplacian}. The elliptic estimate, together
with the already known strong $L^2$-convergence, then gives
\eqref{eq:steep-strong-H2}.

\medskip
\noindent
\emph{Step 3: identification of the maximal monotone graph.}
Set
\[
    \xi_N:=a_N(\Delta_gu_N).
\]
Since $|\xi_N|\leq1$, there exists
$\xi\in L^\infty(Q_T)$ such that, after extraction,
\eqref{eq:steep-classifier-weakstar} holds. Applying
Lemma~\ref{lem:steep-graph-closure} with
$z_N=\Delta_gu_N$ and $z=\Delta_gu$ yields
\eqref{eq:steep-limit-graph}.

Furthermore, for every $\Psi\in L^2(Q_T)$,
\begin{align*}
&\int_{Q_T}
 \bigl(\xi_N|\nabla_gu_N|_g-
       \xi|\nabla_gu|_g\bigr)\Psi\,dV_gdt\\
&\quad=
 \int_{Q_T}\xi_N
 \bigl(|\nabla_gu_N|_g-|\nabla_gu|_g\bigr)
 \Psi\,dV_gdt
 +\int_{Q_T}(\xi_N-\xi)|\nabla_gu|_g\Psi\,dV_gdt.
\end{align*}
The first term tends to zero because
$\nabla_gu_N\to\nabla_gu$ strongly in $L^2(Q_T)$, while the second
one tends to zero by the weak-* convergence of $\xi_N$, since
$|\nabla_gu|_g\Psi\in L^1(Q_T)$. This proves
\eqref{eq:steep-active-product-limit}.

\medskip
\noindent
\emph{Step 4: passage to the limiting equation.}
The strong convergence of $W'(u_N)$, the strong convergence of the
Laplacians, the weak convergence
\eqref{eq:steep-active-product-limit}, and the weak convergence
$b(u_N)\rightharpoonup b(u)$ in $L^{q'}(Q_T)$ permit
passage to the limit in the weak formulation. Hence
$(u,\xi)$ satisfies \eqref{eq:weak-sign-graph} and is a weak
sign-graph solution.
\end{proof}

\begin{theorem}[Two-dimensional stability and uniqueness of the state]
\label{thm:sign-graph-uniqueness}
Assume $d=2$. Let $(u,\xi)$ and $(v,\eta)$ be weak sign-graph
solutions corresponding to the data pairs
$(u_{\mathrm{in}},u_{\mathrm{ref}})$ and
$(v_{\mathrm{in}},v_{\mathrm{ref}})$, respectively. Assume
\[
    u_{\mathrm{in}},v_{\mathrm{in}}\in L^2(M),
    \qquad
    u_{\mathrm{ref}},v_{\mathrm{ref}}\in L^q(M),
    \qquad
    b(u_{\mathrm{ref}})-b(v_{\mathrm{ref}})\in L^2(M).
\]
Then there exists $C_T>0$, depending only on the energy norms of
$u$ and $v$, on $\mu,\nu,\lambda^*$, and on the geometry of $M$, such
that
\begin{equation}
\begin{aligned}
&\operatorname*{ess\,sup}_{t\in[0,T]}
    \|u(t)-v(t)\|_{L^2(M)}^2
    +\int_0^T\|\Delta_g(u-v)\|_{L^2(M)}^2\,dt\\
&\qquad\leq C_T\Bigl(
       \|u_{\mathrm{in}}-v_{\mathrm{in}}\|_{L^2(M)}^2
       +\|b(u_{\mathrm{ref}})-b(v_{\mathrm{ref}})\|_{L^2(M)}^2
    \Bigr).
\end{aligned}
\label{eq:sign-graph-stability}
\end{equation}

In particular, if
$u_{\mathrm{in}}=v_{\mathrm{in}}$ and
$b(u_{\mathrm{ref}})=b(v_{\mathrm{ref}})$, then $u=v$. Thus the state
component $u$ is unique for a fixed initial/reference pair. The graph
variable $\xi$ need not be unique on the set $\{\Delta_gu=0\}$.
\end{theorem}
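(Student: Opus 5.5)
The plan is to repeat the energy argument of Theorem~\ref{thm:uniqueness-beta}, with the smooth classifier replaced by the graph variables. Set $w:=u-v$, $w_{\mathrm{in}}:=u_{\mathrm{in}}-v_{\mathrm{in}}$ and $B_{\mathrm{ref}}:=b(u_{\mathrm{ref}})-b(v_{\mathrm{ref}})$. Subtracting the two weak identities \eqref{eq:weak-sign-graph} gives a difference equation of the form \eqref{eq:diff-weak-manifold}, with $a(\Delta_gu)|\nabla_gu|_g-a(\Delta_gv)|\nabla_gv|_g$ replaced by $\xi|\nabla_gu|_g-\eta|\nabla_gv|_g$. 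Since $|\xi|,|\eta|\le1$, this active difference lies in $L^2(Q_T)$. Exactly as before, the right-hand side then defines an element of $L^2(0,T;H_D^2(M)')+L^{q'}(Q_T)$, using $H_D^2(M)\hookrightarrow L^\infty(M)$ for $d=2$. This justifies testing with $w$ through Steklov averaging in time. We thus obtain the analogue of \eqref{eq:energy-w-manifold} for a.e.\ $t$, with the left-hand side $\frac12\frac{d}{dt}\|w\|_{L^2}^2+\mu\|\Delta_gw\|_{L^2}^2$.

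The double-well, anchoring, and reference terms are handled verbatim as in \eqref{eq:double-well-stability-bound}, \eqref{eq:anchoring-dissipation} and \eqref{eq:reference-stability-bound}. Agmon's inequality yields $K^2\in L^1(0,T)$, and monotonicity of $b$ gives a nonpositive contribution. The only new point is the active term, which I would decompose as
\[
\xi|\nabla_gu|_g-\eta|\nabla_gv|_g
=(\xi-\eta)|\nabla_gu|_g+\eta\bigl(|\nabla_gu|_g-|\nabla_gv|_g\bigr).
\]
For the first piece I would use the graph monotonicity \eqref{eq:sign-graph-monotonicity}. Pointwise a.e., $\xi\in\operatorname{Sign}(\Delta_gu)$ and $\eta\in\operatorname{Sign}(\Delta_gv)$, so $(\xi-\eta)(\Delta_gu-\Delta_gv)\ge0$. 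Multiplying by $|\nabla_gu|_g\ge0$ shows that $-\nu\int_M(\xi-\eta)|\nabla_gu|_g\Delta_gw\,dV_g\le0$. This replaces \eqref{eq:favourable-classifier-term} and uses only the graph structure, not the particular selections. For the second piece, $|\eta|\le1$ and $\bigl||\nabla_gu|_g-|\nabla_gv|_g\bigr|\le|\nabla_gw|_g$ give the bound $\nu\|\nabla_gw\|_{L^2}\|\Delta_gw\|_{L^2}$. Interpolation and Young's inequality then absorb this into $\frac\mu4\|\Delta_gw\|^2+C\|w\|^2$, as in \eqref{eq:active-stability-bound}.

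Collecting terms gives \eqref{eq:smooth-stability-differential}. Gronwall's inequality, using $K^2\in L^1(0,T)$, then yields the $\operatorname{ess\,sup}$ bound, and integrating in time gives the $\Delta_gw$ bound, i.e.\ \eqref{eq:sign-graph-stability}. If the data coincide, the right-hand side vanishes, so $w=0$ and hence $u=v$. The remark on nonuniqueness of $\xi$ needs no proof beyond observing that the argument never identifies $\xi-\eta$. On $\{\Delta_gu=0\}$ the weak formulation only determines $\Delta_g(\xi|\nabla_gu|_g)$, so $\xi$ is unconstrained wherever $|\nabla_gu|_g=0$ as well.

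I expect the main obstacle to be the rigorous justification of the energy identity rather than the estimates. One must check that $\langle\partial_tw,w\rangle$ makes sense with $\partial_tw\in L^2(0,T;H_D^2(M)')+L^{q'}(Q_T)$ and $w\in L^2(0,T;H_D^2(M))\cap L^q(Q_T)$. One must also check that the chain rule $\int_0^t\langle\partial_tw,w\rangle=\frac12\|w(t)\|^2-\frac12\|w_{\mathrm{in}}\|^2$ holds, which is where the continuity $u,v\in C([0,T];L^2(M))$ from Definition~\ref{def:sign-graph-solution} is used. I would handle this by the same Steklov/intersection-space chain rule already invoked in Lemma~\ref{lem:trace-preliminary-flux}.
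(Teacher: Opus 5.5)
Your proposal is correct and follows the paper's proof essentially step for step. It uses the same splitting $\xi|\nabla_g u|_g-\eta|\nabla_g v|_g=(\xi-\eta)|\nabla_g u|_g+\eta\bigl(|\nabla_g u|_g-|\nabla_g v|_g\bigr)$, pointwise graph monotonicity for the first piece, $|\eta|\le1$ with interpolation for the second, Agmon's inequality for the cubic difference, monotonicity of $b$, a Steklov-justified energy identity, and Gronwall.
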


\begin{proof}
Set
\[
    w:=u-v,
    \qquad
    w_{\mathrm{in}}:=u_{\mathrm{in}}-v_{\mathrm{in}},
    \qquad
    B_{\mathrm{ref}}:=b(u_{\mathrm{ref}})-b(v_{\mathrm{ref}}).
\]
Subtracting the two weak formulations and testing by $w$, justified
by the same Steklov regularisation used for the smooth-classifier
problem, gives
\begin{align}
\frac12\frac{d}{dt}\|w\|_{L^2(M)}^2
+\mu\|\Delta_gw\|_{L^2(M)}^2
={}&
\int_M\bigl(W'(u)-W'(v)\bigr)\Delta_gw\,dV_g
\notag\\
&-\nu\int_M
\bigl(\xi|\nabla_gu|_g-\eta|\nabla_gv|_g\bigr)
\Delta_gw\,dV_g
\notag\\
&+\int_M\lambda(x)
B_{\mathrm{ref}}w\,dV_g
\notag\\
&-\int_M\lambda(x)
\bigl(b(u)-b(v)\bigr)w\,dV_g.
\label{eq:sign-graph-difference-energy}
\end{align}

For the double-well term,
\[
    W'(u)-W'(v)
    =w\bigl(u^2+uv+v^2\bigr)-w,
\]
so that
\[
    |W'(u)-W'(v)|
    \leq C\bigl(1+|u|^2+|v|^2\bigr)|w|.
\]
Hence
\begin{equation}
\begin{aligned}
\left|
\int_M\bigl(W'(u)-W'(v)\bigr)\Delta_gw\,dV_g
\right|
\leq
\frac{\mu}{4}\|\Delta_gw\|_{L^2(M)}^2
+C K(t)^2\|w\|_{L^2(M)}^2,
\end{aligned}
\label{eq:sign-graph-double-well-estimate}
\end{equation}
where
\[
    K(t):=1+\|u(t)\|_{L^\infty(M)}^2
             +\|v(t)\|_{L^\infty(M)}^2.
\]
By Agmon's inequality \cite{Agmon} in dimension two,
\[
    \|z\|_{L^\infty(M)}^2
    \leq C\|z\|_{L^2(M)}\|z\|_{H^2(M)},
\]
and therefore $K^2\in L^1(0,T)$.

We next estimate the active term. We write
\begin{align}
&\xi|\nabla_gu|_g-\eta|\nabla_gv|_g
\notag\\
&\qquad=
(\xi-\eta)|\nabla_gu|_g
+\eta\bigl(|\nabla_gu|_g-|\nabla_gv|_g\bigr).
\label{eq:sign-graph-active-decomposition}
\end{align}
Since
\[
    \xi\in\operatorname{Sign}(\Delta_gu),
    \qquad
    \eta\in\operatorname{Sign}(\Delta_gv),
\]
the monotonicity of the graph gives
\begin{equation}
    (\xi-\eta)\bigl(\Delta_gu-\Delta_gv\bigr)
    =(\xi-\eta)\Delta_gw\geq0
    \qquad\text{a.e. in }Q_T.
    \label{eq:pointwise-sign-monotonicity}
\end{equation}
Since $|\nabla_gu|_g\geq0$, it follows that
\begin{equation}
   -\nu\int_M
   (\xi-\eta)|\nabla_gu|_g\Delta_gw\,dV_g
   \leq0.
   \label{eq:favorable-sign-graph-term}
\end{equation}
For the remaining part, using $|\eta|\leq1$ and
\[
    \bigl||\nabla_gu|_g-|\nabla_gv|_g\bigr|
    \leq |\nabla_gw|_g,
\]
we obtain
\begin{align}
&-\nu\int_M
\eta\bigl(|\nabla_gu|_g-|\nabla_gv|_g\bigr)
\Delta_gw\,dV_g
\notag\\
&\qquad\leq
\nu\|\nabla_gw\|_{L^2(M)}
\|\Delta_gw\|_{L^2(M)}.
\end{align}
The interpolation estimate
\[
    \|\nabla_gw\|_{L^2(M)}^2
    \leq \varepsilon\|\Delta_gw\|_{L^2(M)}^2
    +C_\varepsilon\|w\|_{L^2(M)}^2
\]
and Young's inequality therefore imply
\begin{equation}
\begin{aligned}
&-\nu\int_M
\bigl(\xi|\nabla_gu|_g-\eta|\nabla_gv|_g\bigr)
\Delta_gw\,dV_g\\
&\qquad\leq
\frac{\mu}{4}\|\Delta_gw\|_{L^2(M)}^2
+C\|w\|_{L^2(M)}^2.
\end{aligned}
\label{eq:sign-graph-active-estimate}
\end{equation}

The nonlinear anchoring contribution is dissipative because
$b$ is monotone:
\begin{equation}
    -\int_M\lambda(x)
    \bigl(b(u)-b(v)\bigr)w\,dV_g
    \leq0.
    \label{eq:sign-graph-anchor-dissipation}
\end{equation}
The data-dependent part satisfies
\begin{equation}
\begin{aligned}
&\int_M\lambda(x)
\;B_{\mathrm{ref}}w\,dV_g\\
&\qquad\leq
C\|B_{\mathrm{ref}}\|_{L^2(M)}^2
+C\|w\|_{L^2(M)}^2.
\end{aligned}
\label{eq:sign-graph-reference-estimate}
\end{equation}
Combining
\eqref{eq:sign-graph-difference-energy}--
\eqref{eq:sign-graph-reference-estimate}, we obtain
\begin{equation}
\begin{aligned}
\frac{d}{dt}\|w(t)\|_{L^2(M)}^2
+\mu\|\Delta_gw(t)\|_{L^2(M)}^2
\leq{}&
C\bigl(1+K(t)^2\bigr)\|w(t)\|_{L^2(M)}^2\\
&+C\|B_{\mathrm{ref}}\|_{L^2(M)}^2.
\end{aligned}
\label{eq:sign-graph-Gronwall-inequality}
\end{equation}
Since $K^2\in L^1(0,T)$, Gronwall's lemma first gives the
$L^\infty(0,T;L^2(M))$ bound in
\eqref{eq:sign-graph-stability}. Integrating
\eqref{eq:sign-graph-Gronwall-inequality} once more and using this
bound gives the stated control of
$\Delta_gw$ in $L^2(Q_T)$. If
$w_{\mathrm{in}}=0$ and $B_{\mathrm{ref}}=0$, then both terms on the
right-hand side of \eqref{eq:sign-graph-stability} vanish, and hence $u=v$.
\end{proof}

\begin{corollary}[Convergence of the full steep-classifier family in two dimensions]
\label{cor:full-steep-convergence}
Assume $d=2$, fix an admissible pair
$(u_{\mathrm{in}},u_{\mathrm{ref}})$, and for each $N$ let $u_N$ be
the unique weak solution of the smooth-classifier problem
\eqref{eq:steep-smooth-problem} with this same pair.
Then there exists a unique state $u$ solving the sign-graph problem
such that
\[
    u_N\longrightarrow u
    \qquad\text{strongly in }L^2(0,T;H_D^2(M)).
\]
No extraction in $N$ is needed for the state variable.
\end{corollary}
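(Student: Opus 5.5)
The plan is the standard subsequence-of-subsequence argument, combining Theorem~\ref{thm:steep-classifier-limit} with Theorem~\ref{thm:sign-graph-uniqueness}.

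\emph{Step 1: the sequence is well defined.} For each $N$, the classifier $a_N$ satisfies Assumption~\ref{ass:aclass}. Theorem~\ref{thm:existence-weak-solution} therefore provides a weak solution of \eqref{eq:steep-smooth-problem}, and Theorem~\ref{thm:uniqueness-beta} (with $d=2$ and identical data, so that $b(u_{\mathrm{ref}})-b(u_{\mathrm{ref}})=0\in L^2(M)$) shows it is unique. Hence $u_N$ is unambiguous.

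\emph{Step 2: existence of a limit state.} Theorem~\ref{thm:steep-classifier-limit} yields a subsequence and a pair $(u,\xi)$ that is a weak sign-graph solution in the sense of Definition~\ref{def:sign-graph-solution}, with the convergence \eqref{eq:steep-strong-H2}. By Theorem~\ref{thm:sign-graph-uniqueness}, again with identical data, the state component of any weak sign-graph solution with data $(u_{\mathrm{in}},u_{\mathrm{ref}})$ equals $u$. This remains true even though $\xi$ itself may differ, because the stability estimate involves only the states. So $u$ depends only on the data and not on the subsequence chosen.

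\emph{Step 3: the whole family converges.} Suppose the full family does not converge to $u$ in $L^2(0,T;H_D^2(M))$. Then there are $\varepsilon>0$ and a sequence $N_k\to\infty$ with $\|u_{N_k}-u\|_{L^2(0,T;H_D^2(M))}\geq\varepsilon$ for all $k$. The family $(u_{N_k})_k$ consists of weak solutions for the classifiers $a_{N_k}$, and the proof of Theorem~\ref{thm:steep-classifier-limit} applies verbatim to any sequence $N_k\to\infty$, because the uniform bounds use only $|a_{N_k}|\leq1$ and monotonicity, and Lemma~\ref{lem:steep-graph-closure} is stated for an arbitrary $N_k\to\infty$. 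It therefore produces a further subsequence converging strongly in $L^2(0,T;H_D^2(M))$ to the state of some sign-graph solution. By Step 2 that state is $u$, which contradicts the lower bound $\varepsilon$. Hence $u_N\to u$ along the full family.

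The only point needing care is this last observation: Theorem~\ref{thm:steep-classifier-limit} is phrased for the family indexed by $N$, but its argument must be applied to an arbitrary subsequence $N_k$. I expect this to be the main, though mild, obstacle, and I would handle it by noting explicitly that no step of that proof uses the specific indexing. The multiplier $\xi$ is not claimed to converge along the full family, consistent with the statement's restriction to the state variable.
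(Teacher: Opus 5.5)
Your proposal is correct and follows essentially the same route as the paper. Theorem~\ref{thm:steep-classifier-limit} makes every subsequence relatively compact, Theorem~\ref{thm:sign-graph-uniqueness} makes the cluster state unique, and the subsequence principle in $L^2(0,T;H_D^2(M))$ concludes; your Step~1 (well-posedness of $u_N$) and your remark that Theorem~\ref{thm:steep-classifier-limit} applies to an arbitrary sequence $N_k\to\infty$ only spell out what the paper's short proof uses implicitly.
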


\begin{proof}
Every subsequence of $(u_N)_N$ contains, by
Theorem~\ref{thm:steep-classifier-limit}, a further subsequence
converging strongly in $L^2(0,T;H_D^2(M))$ to the state component of
a weak sign-graph solution. Theorem~\ref{thm:sign-graph-uniqueness}
shows that every such state component is the same function $u$.
Relative compactness together with uniqueness of the cluster point
therefore implies convergence of the full sequence.
\end{proof}

\begin{remark}[Why the graph is necessary]
Even though
$\Delta_gu_N\to\Delta_gu$ strongly in $L^2(Q_T)$, one cannot in
general conclude that
$a_N(\Delta_gu_N)\to\operatorname{sgn}_0(\Delta_gu)$ on the zero set
of $\Delta_gu$. For example, if
$\Delta_gu_N=c/N$ on a set of positive measure, then
\[
    a_N(\Delta_gu_N)=\frac{2}{\pi}\arctan(c),
\]
while $\Delta_gu_N\to0$. The maximal graph
$\operatorname{Sign}(0)=[-1,1]$ records exactly this residual freedom.
The state $u$ is nevertheless unique in dimension two by
Theorem~\ref{thm:sign-graph-uniqueness}.
\end{remark}

% Reserve the fourteen equation numbers occupied by the duplicated
% steep-classifier preliminaries in the circulated draft.  This keeps the
% numerical sign identity and source-corrected mass balance at (5.135) and
% (5.137), respectively, for a transparent revision response.

%\addtocounter{equation}{14}

\section{Numerical experiments, exact-solution verification, and direct
sign-graph approximation}
\label{sec:numerics}

The purpose of this section is to illustrate and test the mechanisms that are specific to
the analysis. We address five separate questions.  First, we verify the complete active, anchored, split algorithm against a prescribed exact solution for which every
term is non-negligible.  Second, we solve the discrete maximal-monotone sign-graph problem directly and compare it with the smooth classifiers $a_N$ under coupled refinement of $N$, space, time, and quadrature.
Third, we attach a discretisation error budget to the paired spinodal experiment. Fourth, we test continuous dependence in several independent
directions and over ranges of $N$, $\nu$, and perturbation size.  Fifth, we consider missing-strip reconstruction as an active-response stress test, with hidden truth excluded from the evolution and with the active-minus-passive observable checked under one-factor refinements.

All computations in this section are performed on the closed flat torus $\mathbb T^2=[0,1)^2$. This deliberate restriction permits a direct constant-mode check and isolates the two-dimensional sign-graph and
stability mechanisms.  We do not claim that these experiments validate a particular surface discretisation on a general Riemannian manifold.  No computed state is clipped, and Fourier interpolation is used only for
cross-grid comparison and display.

Throughout, unless explicitly stated otherwise, we solve
\begin{align}
\label{num:eq:model}
\partial_tu
&=
\Delta\!\left(
u^3-u-\mu\Delta u
-\nu a_N(\Delta u)|\nabla u|
\right)
+\lambda(x)\left(u_{\rm ref}^7-u^7\right)+f,\\ \nonumber
a_N(r)&=\frac{2}{\pi}\arctan(Nr).
\end{align}
Thus the constraint law is the admissible map
$b=\beta_8$, $\beta_8(r)=|r|^6r=r^7$, with $q=8>6$.
The forcing $f$ is nonzero only in the exact-solution test.

\subsection{Fourier--Galerkin discretisation and smooth-classifier update}
\label{subsec:num-finiteN-scheme}

Let $n=2K+1$ be the odd number of retained modes in each direction and
\[
\mathcal K_K=\{-K,\ldots,K\}^2,
\qquad
V_K=\operatorname{span}\{e^{2\pi i k\cdot x}:k\in\mathcal K_K\}.
\]
Our Fourier normalization is
\begin{equation}
\label{num:eq:fourier-normalization}
u_h(x)=\sum_{k\in\mathcal K_K}\widehat u_k e^{2\pi i k\cdot x},
\qquad
\widehat u_k=\frac1{n^2}\sum_{j\in\{0,\ldots,n-1\}^2}
u_h(j/n)e^{-2\pi i k\cdot j/n}.
\end{equation}
Under this normalization, Parseval identifies the coefficient norm with the
volume-normalised discrete $L^2$-norm.  For a field on an $m\times m$
padded grid $Q$, our quadrature and average norm are
\[
\langle v\rangle^Q=\frac1{m^2}\sum_{q\in Q}v(q),
\qquad
\|v\|_{2,\rm av}^Q=\bigl(\langle|v|^2\rangle^Q\bigr)^{1/2}.
\]
We write
$P_h^Q$ for evaluation on a padded grid followed by Fourier truncation,
and $A_h=-\Delta_h$.  The padded size is the nearest integer to $pn$
having the same parity as $n$, where $p$ is the padding factor.  Hence
there is no ambiguous Nyquist mode.  Analytic reference, confidence, and
forcing fields are evaluated directly on the padded nodes.

Given $u_h^n$, set
\begin{equation}
\label{num:eq:lagged-data}
G_h^n=|\nabla_hu_h^n|,
\qquad
\mathcal W_h^n=P_h^Q\bigl((u_h^n)^3-u_h^n\bigr),
\qquad
S_n=\max\left\{4,1+3\max\{1,\|u_h^n\|_{\infty,Q}\}^2\right\}.
\end{equation}
The differential predictor $\widetilde u_h^{n+1}\in V_K$ is defined by
\begin{align}
\bigl(I+\Delta tS_nA_h+\Delta t\mu A_h^2\bigr)
\widetilde u_h^{n+1}
={}&
\bigl(I+\Delta tS_nA_h\bigr)u_h^n
-\Delta tA_h\mathcal W_h^n
\nonumber\\
&+\Delta t\nu A_hP_h^Q\!\left[
a_N(-A_h\widetilde u_h^{n+1})G_h^n
\right]
+\Delta tP_h^Qf^{n+1}.
\label{num:eq:finiteN-predictor}
\end{align}
Thus the classifier is implicit in the new Laplacian, whereas only the
nonnegative factor $G_h^n$ is lagged.  At convergence, with
$z_h^{n+1}=\Delta_h\widetilde u_h^{n+1}$, quadrature orthogonality gives
\begin{equation}
\label{num:eq:finiteN-sign}
\left(
P_h^Q[a_N(z_h^{n+1})G_h^n],z_h^{n+1}
\right)_{h,Q}
=
\left\langle
a_N(z_h^{n+1})z_h^{n+1}G_h^n
\right\rangle_{\mathbb T^2}^{Q}
\geq0.
\end{equation}
This is the discrete counterpart of the favourable sign used in the
$L^2$-estimate.  We note that a completely explicit classifier would not retain
\eqref{num:eq:finiteN-sign} and is not used.

The nonlinear source is then advanced on the padded grid by the monotone
pointwise solve
\begin{equation}
\label{num:eq:anchor}
r_h^{n+1}+\Delta t\lambda(x)(r_h^{n+1})^7
=
\widetilde u_h^{n+1}+\Delta t\lambda(x)u_{\rm ref}^7,
\qquad
u_h^{n+1}=P_h^Qr_h^{n+1}.
\end{equation}
The scalar derivative is $1+7\Delta t\lambda r^6\geq1$.  We use safeguarded Newton iteration in
the bracket $[-R,R]$, $R=\max\{1,|\mathrm{rhs}|\}$, initialized by the
predictor clipped to this bracket. Bisection replaces any Newton proposal outside the bracket. The cap is 30 iterations and the stopping rule is
\[
\frac{\|F_{\rm anc}\|_\infty}{1+\|\mathrm{rhs}\|_\infty}
\leq5\times10^{-13}.
\]

The constant-mode calculation is restricted explicitly to the closed torus.
Since $A_h1=0$ and $P_h^Q$ preserves constants, the converged algebraic step obeys
\begin{equation}
\label{num:eq:closed-mass-balance}
\left\langle u_h^{n+1}-u_h^n\right\rangle
=\Delta t\left\langle
\lambda\bigl[u_{\rm ref}^7-(r_h^{n+1})^7\bigr]+f^{n+1}
\right\rangle^Q,
\end{equation}
up to the reported anchoring, projection, and roundoff residuals.  Anchored
runs are not mass conservative.  Moreover,
\eqref{num:eq:closed-mass-balance} is not asserted for the Dirichlet/Navier realization, for which the constant function is not an admissible test function and a boundary flux must be included.

\paragraph{Nonlinear and linear solvers}
Let $c$ denote the coefficient vector in
\eqref{num:eq:finiteN-predictor}, let $F_h(c)$ be its modal residual, and
write
\[
D_k=1+\Delta tS_n|2\pi k|^2+\Delta t\mu|2\pi k|^4.
\]
Damped Newton iteration is initialized by the stabilized passive predictor,
$c_k^{(0)}=(\mathfrak r_h^n)_k/D_k$, where $\mathfrak r_h^n$ is the
right-hand side with the active term omitted.  At Newton iterate $j$, set
\[
\overline w_j=
\left\langle
a_N'(\Delta_hu_h^{(j)})G_h^n
\right\rangle^Q,
\qquad
a_N'(r)=\frac{2N}{\pi(1+N^2r^2)}.
\]
The inverse Fourier multiplier with symbol
\begin{equation}
\label{num:eq:newton-preconditioner}
\left[D_k+\Delta t\nu\overline w_j|2\pi k|^4\right]^{-1}
\end{equation}
is used as a left preconditioner for GMRES.  The linear tolerances are
\[
{\tt rtol}=\max\{10^{-6},\min(10^{-3},0.1R_j)\},
\qquad
{\tt atol}=10^{-12},
\qquad
R_j=\frac{\|F_h(c^{(j)})\|_{\ell^2}}
{\max\{1,\|c^{(j)}\|_{\ell^2}\}}.
\]
GMRES is restarted after 40 inner iterations and is allowed at most 80
restart cycles.  The Newton correction $\delta_j$ is accepted with step
$\alpha_j$ if
\begin{equation}
\label{num:eq:armijo}
\|F_h(c^{(j)}+\alpha_j\delta_j)\|_{\ell^2}
\leq
(1-10^{-4}\alpha_j)\|F_h(c^{(j)})\|_{\ell^2}.
\end{equation}
The search starts at $\alpha_j=1$, halves the step, and fails if $\alpha_j<2^{-16}$.  All finite-classifier experiments reported below use a Newton cap of 100.  A failed GMRES, line search, Newton solve, or anchoring solve terminates the run (an unconverged state is never accepted silently).

\subsection{A direct discrete sign-graph solver}
\label{subsec:num-direct-graph-scheme}

We next replace the smooth classifier in the predictor by the maximal monotone graph itself. This is a direct discrete calculation and does not use a large finite $N$ as an approximation. Suppress the time index and define
\[
B_h=I+\Delta tS_nA_h+\Delta t\mu A_h^2,
\qquad
K_h=\Delta_h=-A_h,
\qquad
\alpha=\Delta t\nu,
\]
with $\mathfrak r_h$ equal to the non-active right-hand side of \eqref{num:eq:finiteN-predictor}. The graph predictor is the unique minimizer in the primal variable of the strongly convex constrained problem
\begin{equation}
\label{num:eq:graph-minimization}
\min_{c\in V_K,\,z}
\left\{
\frac12(B_hc,c)_h-(\mathfrak r_h,c)_h
+\alpha\langle G_h^n|z|\rangle^Q
\right\},
\qquad z=K_hc.
\end{equation}
Its optimality condition is
\begin{equation}
\label{num:eq:discrete-graph-inclusion}
B_hc-\mathfrak r_h-A_hP_h^Qp=0,
\qquad
p=\alpha G_h^n\xi_h,
\qquad
\xi_h\in\operatorname{Sign}(K_hc)
\quad\text{on the padded nodes}.
\end{equation}

We solve \eqref{num:eq:graph-minimization} by a weighted
split-Bregman/ADMM iteration.  With scaled dual variable $y$, penalty $\rho=40\alpha$, and
$\operatorname{shrink}(v,\tau)=\operatorname{sgn}(v)
\max\{|v|-\tau,0\}$, one iteration is
\begin{align}
c^{j+1}
&=(B_h+\rho A_h^2)^{-1}
\left[\mathfrak r_h-\rho A_hP_h^Q(z^j-y^j)\right],
\label{num:eq:admm-c}\\
z^{j+1}
&=\operatorname{shrink}\!\left(
K_hc^{j+1}+y^j,\frac{\alpha G_h^n}{\rho}
\right),
\label{num:eq:admm-z}\\
y^{j+1}
&=y^j+K_hc^{j+1}-z^{j+1}.
\label{num:eq:admm-y}
\end{align}
The diagonal solve in \eqref{num:eq:admm-c} is exact in Fourier space. At the first step we take
$c^0=B_h^{-1}\mathfrak r_h$, $z^0=K_hc^0$, and $y^0=0$.
Subsequently the dual product from the preceding time step is clipped to its new admissible box and used as a warm
start. The cap is 15000 ADMM iterations.

The key quantity is the active product
\begin{equation}
\label{num:eq:active-product}
p_h=\rho y_h,
\qquad
\chi_h=\frac{p_h}{\alpha}=G_h^n\xi_h,
\qquad
|p_h|\leq\alpha G_h^n.
\end{equation}
Where $G_h^n>0$, one admissible multiplier is
$\xi_h=p_h/(\alpha G_h^n)$.  Where $G_h^n=0$, the state equation determines neither $\xi_h$ nor the pointwise representative of $\chi_h$. The graph relation nevertheless forces $\chi_h=G_h^n\xi_h=0$. On
$\{K_hc=0\}$, the graph itself is multivalued. We therefore compare the dynamically identifiable mean-free low modes of the active product and use $\xi_h$ only for a qualitative visualization near the zero set.

For reproducibility, the two stopping residuals are
\begin{align}
R_{\rm pri}^{j+1}
&=
\frac{\|K_hc^{j+1}-z^{j+1}\|_{2,\rm av}^{Q}}
{\max\{1,\|K_hc^{j+1}\|_{2,\rm av}^{Q}\}},
\label{num:eq:admm-primal}\\
R_{\rm dual}^{j+1}
&=
\frac{\|\rho A_hP_h^Q(z^{j+1}-z^j)\|_{\ell^2}}
{\max\{1,\|\mathfrak r_h\|_{\ell^2}\}}.
\label{num:eq:admm-dual}
\end{align}
Both must be below the stated ADMM tolerance.  Independently, we check
\begin{align}
R_{\rm state}
&=
\frac{\|B_hc-\mathfrak r_h-A_hP_h^Qp_h\|_{\ell^2}}
{\max\{1,\|B_hc\|_{\ell^2},\|\mathfrak r_h\|_{\ell^2}\}},
\label{num:eq:graph-state-residual}\\
R_{\rm box}
&=
\frac{\|(|p_h|-\alpha G_h^n)_+\|_\infty}
{\max\{10^{-30},\|\alpha G_h^n\|_\infty\}},
\label{num:eq:graph-box-residual}\\
R_{\rm comp}
&=
\frac{\left\langle
\left|\alpha G_h^n|K_hc|-p_hK_hc\right|
\right\rangle^Q}
{\max\{10^{-30},\langle\alpha G_h^n|K_hc|\rangle^Q\}},
\label{num:eq:graph-complementarity}\\
R_{\rm graph}
&=
\left\|
\xi_h-\Pi_{[-1,1]}(\xi_h+\gamma K_hc)
\right\|_{2,\rm av}^{Q},
\qquad
\gamma=\frac1{\max\{1,\|K_hc\|_{2,\rm av}^{Q}\}},
\label{num:eq:graph-projection-residual}
\end{align}
as well as the normalized complementarity part of the primal--dual gap,
\begin{equation}
\label{num:eq:graph-gap}
\mathcal G_h=
\frac{\langle\alpha G_h^n|K_hc|-p_hK_hc\rangle^Q}
{\max\{10^{-30},\langle\alpha G_h^n|K_hc|\rangle^Q\}}.
\end{equation}

\subsection{Diagnostics and numerical configuration}
\label{subsec:num-diagnostics}

For a retained Fourier state, we record both the ordinary and the
Laplacian-weighted high-mode fractions
\begin{equation}
\label{num:eq:modal-tails}
\rho_0(u_h)=
\frac{\sum_{k\in\mathcal T_K}|\widehat u_k|^2}
{\sum_k|\widehat u_k|^2},
\qquad
\rho_\Delta(u_h)=
\frac{\sum_{k\in\mathcal T_K}|2\pi k|^4|\widehat u_k|^2}
{\sum_k|2\pi k|^4|\widehat u_k|^2},
\end{equation}
where
$\mathcal T_K=\{k:\max(|k_1|,|k_2|)\geq\lceil3K/4\rceil\}$.
We also store \eqref{num:eq:closed-mass-balance}, the finite-$N$ Newton residual, the trajectory maxima of every graph residual, the constraint residual and projection defect, extrema, and the minimum of
$a_N(\Delta_hu_h)\Delta_hu_h$.
More precisely, with
$s_h^{n+1}=\lambda[(u_{\rm ref})^7-(r_h^{n+1})^7]+f^{n+1}$, the reported
one-step and cumulative source--mass defects and the constraint projection defect are
\begin{align}
d_{\rm mass}^{n+1}
&=\left|\langle u_h^{n+1}-u_h^n\rangle
-\Delta t\langle s_h^{n+1}\rangle^Q\right|,
\label{num:eq:mass-step-defect}\\
D_{\rm mass}^{n+1}
&=\left|\langle u_h^{n+1}-u_h^0\rangle
-\Delta t\sum_{j=0}^{n}\langle s_h^{j+1}\rangle^Q\right|,
\label{num:eq:mass-cumulative-defect}\\
d_{\rm proj}^{n+1}
&=\left\|P_h^Qr_h^{n+1}-r_h^{n+1}\right\|_{2,\rm av}^{Q}.
\label{num:eq:anchor-projection-defect}
\end{align}
When a trajectory-level value is quoted below, it is the maximum of the
corresponding recorded defect unless ``cumulative'' is stated explicitly.

For morphology we use
\begin{equation}
\label{num:eq:morphology}
I(u)=\langle|\nabla_hu|\rangle^Q,
\qquad
P(u)=\langle|u^2-1|\rangle^Q,
\qquad
\ell_c(u)=\frac{2\pi}{k_c(u)},
\end{equation}
where, with
$S_k=|\widehat{u-\langle u\rangle}_k|^2$ and
$\kappa_k=2\pi k$,
\begin{equation}
\label{num:eq:structure-factor}
k_c(u)=
\frac{\sum_{k\neq0}|\kappa_k|S_k}{\sum_{k\neq0}S_k}.
\end{equation}
The interface and purity quantities are diagnostics, not Lyapunov
functionals for the active anchored problem.

\begin{table}[tbp]
\centering
\scriptsize
\renewcommand{\arraystretch}{1.08}
\setlength{\tabcolsep}{2.5pt}
\begin{tabular}{@{}
>{\raggedright\arraybackslash}p{0.14\textwidth}
>{\raggedright\arraybackslash}p{0.18\textwidth}
>{\raggedright\arraybackslash}p{0.15\textwidth}
>{\raggedright\arraybackslash}p{0.22\textwidth}
>{\raggedright\arraybackslash}p{0.25\textwidth}@{}}
\hline
Study & Retained/padded modes & Time & Model/data parameters & Error-budget and solver controls\\
\hline
Prescribed exact solution
& $31/93,\ 47/141$;\newline
$63/189,\ 95/285$;\newline controls $63/127,\ 63/189$
& $T=10^{-3}$; $\Delta t=2\!\times\!10^{-4},10^{-4},5\!\times\!10^{-5},2.5\!\times\!10^{-5}$
& $\mu=3\!\times\!10^{-3}$, $\nu=5\!\times\!10^{-2}$, $\kappa=10^{-2}$, $20\leq\lambda\leq30$; analytic $u_\star,u_{\rm ref},f_\star$
& Newton tolerance $10^{-10}$, cap 100; comparison tolerance $10^{-8}$; semidiscrete grids $n=15,19,23,31,47,63,95,127$\\
\hline
Direct sign graph and finite $N$
& L0: $31/63$; L1: $47/95$; L2: $63/189$;\newline
controls $47/95,\ 63/127,\ 47/141$
& $T=5\!\times\!10^{-4}$; $\Delta t=10^{-4},5\!\times\!10^{-5},2.5\!\times\!10^{-5}$
& $\mu=3\!\times\!10^{-3}$; $\nu=2\!\times\!10^{-3},10^{-2}$; $N=8,32,128,512$ on all levels and $N=2$ on L1; $\lambda=0.05$
& ADMM tolerance $10^{-8}$, cap 15000, $\rho=40\Delta t\nu$; finite-$N$ tolerance $10^{-8}$; mean-free product cutoff $|k|\leq6$ with 4/8 sensitivity; padding-3 and $3\!\times\!10^{-9}$ controls; coupled L0--L1--L2 budget\\
\hline
Spinodal production P
& $n/m=47/95$; 20 paired seeds
& $T=8\!\times\!10^{-3}$, $\Delta t=5\!\times\!10^{-5}$
& $\mu=8\!\times\!10^{-4}$; inactive $\nu=0$, active $(\nu,N)=(2\!\times\!10^{-3},32)$; $\lambda=10^{-4}$
& seeds 20260901--20260920; tolerance $10^{-6}$; 50000 paired bootstrap draws\\
\hline
Spinodal audit T/H/A/R/F
& T: $47/95$; H: $63/127$; A: $47/141$; R: $47/95$; F: $63/189$
& T/F use $\Delta t=2.5\!\times\!10^{-5}$; otherwise $5\!\times\!10^{-5}$
& same coupled random fields and physical parameters as P
& first six seeds for T/H/A/R, first four for F; R/F tolerance $10^{-7}$, otherwise $10^{-6}$; one-factor shifts and joint fine audit\\
\hline
Stability production
& $n/m=31/63$
& $T=10^{-3}$, $\Delta t=5\!\times\!10^{-5}$
& $\mu=3\!\times\!10^{-3}$; $N=2,8,32,128,512$; $\nu=0,5\!\times\!10^{-4},10^{-3},2\!\times\!10^{-3},4\!\times\!10^{-3}$; $\lambda=0.2$
& four product-data directions; $\varepsilon=10^{-4},3\!\times\!10^{-4},10^{-3},3\!\times\!10^{-3}$; fixed $S=5$; tolerance $10^{-9}$\\
\hline
Stability refined
& $n/m=47/141$
& $T=10^{-3}$, $\Delta t=2.5\!\times\!10^{-5}$
& $N=2:\ \nu=0.002$;\newline
$N=32:\ \nu=0.0005,0.002,0.004$;\newline
$N=512:\ \nu=0.002$
& all four directions, $\varepsilon=10^{-3}$, tolerance $10^{-10}$\\
\hline
Reconstruction stress test
& P/T/R: $47/95$; H: $63/127$; A: $47/141$
& $T=3\!\times\!10^{-3}$; P/H/A/R use $\Delta t=2.5\!\times\!10^{-5}$; T uses $1.25\!\times\!10^{-5}$
& $\mu=3\!\times\!10^{-3}$; passive $\nu=0$, active $(\nu,N)=(10^{-2},32)$; $\lambda_{\rm known}=6$, $\lambda_{\rm miss}=10^{-3}$
& common $381^2$ evaluation grid; P/T/H/A tolerance $10^{-7}$, R tolerance $10^{-9}$; hidden truth used only after each solve\\
\hline
Common algebraic controls
& odd Fourier grids; direct analytic data on padded nodes
& right-endpoint forcing; first-order splitting
& $b(r)=r^7$; no clipping
& finite-classifier Newton cap 100; GMRES(40), 80 restart cycles; ${\tt atol}=10^{-12}$; Armijo constant $10^{-4}$, minimum step $2^{-16}$; anchor tolerance $5\!\times\!10^{-13}$, cap 30\\
\hline
\end{tabular}
\caption{Numerical configuration.  A pair $n/m$ denotes $n$ retained Fourier modes and $m$ padded nodes per direction.
The checked labels mean: production (P), time (T), spatial resolution (H), padding (A), nonlinear tolerance (R), and, for the spinodal study only, their joint fine configuration (F).}
\label{tab:num-parameters}
\end{table}

Computations use double precision with Python~3.12, NumPy~2.3, SciPy~1.17, pandas~2.2, and Matplotlib~3.10.  

\subsection{Verification against a prescribed exact solution}
\label{subsec:num-exact}

Reference-solution self-convergence can reproduce the same coding error on
every grid.  We therefore prescribe an exact solution and derive an external
forcing analytically.  Let $F$ be the zero-mean periodic primitive
determined by
\begin{equation}
\label{num:eq:flat-primitive}
\begin{aligned}
F'(s)&=\cos^7(2\pi s),\\
F(s)&=
\frac{35}{128\pi}\sin(2\pi s)
+\frac{7}{128\pi}\sin(6\pi s)\\
&\quad
+\frac{7}{640\pi}\sin(10\pi s)
+\frac{1}{896\pi}\sin(14\pi s).
\end{aligned}
\end{equation}
We take
\begin{align}
u_\star(t,x,y)
&=0.10+0.05e^{-20t}
+7.5e^{-15t}\bigl(F(x)+0.7F(y)\bigr),
\label{num:eq:exact-state}\\
u_{\rm ref}(x,y)
&=0.10+5F(x)-6F(y),
\qquad
\lambda(x,y)=25\bigl[1+0.2\cos(2\pi x)\sin(2\pi y)\bigr].
\label{num:eq:exact-data}
\end{align}
The state is genuinely two-dimensional and all its derivatives are finite
Fourier sums.  Since each gradient component contains a seventh power of a
cosine, simultaneous critical points have sufficiently high-order zeros for
$|\nabla u_\star|$ to be at least $C^6$ near every joint zero (and in
particular $C^2$, as required by the forcing).  For this
verification only, write
$a_\kappa(r)=2\arctan(\kappa r)/\pi$ and take
$(\mu,\nu,\kappa)=(3\times10^{-3},5\times10^{-2},10^{-2})$.  Define
\begin{equation}
\label{num:eq:exact-forcing}
f_\star
=\partial_tu_\star
-\Delta\!\left(
u_\star^3-u_\star-\mu\Delta u_\star
-\nu a_\kappa(\Delta u_\star)|\nabla u_\star|
\right)
-\lambda(u_{\rm ref}^7-u_\star^7).
\end{equation}
All derivatives in \eqref{num:eq:exact-forcing} are evaluated directly from
\eqref{num:eq:flat-primitive}, independently of the production differentiation
and classifier routines.  The auxiliary parameter $\kappa=10^{-2}$ is used
only to verify the smooth-classifier algorithm.
At $t=0$, the RMS magnitudes of
$u_t$, $\Delta W'(u)$, $\mu\Delta^2u$, the active term, the anchoring
term, and $f_\star$ are respectively
\[
8.68,\quad73.8,\quad82.7,\quad66.9,\quad14.1,\quad119.2.
\]
Thus the verification does not suppress the active or anchored parts of the
implementation.

For numerical state $u_h$, define the terminal errors
\begin{equation}
\label{num:eq:exact-errors}
E_0(T)=\frac{\|u_h(T)-u_\star(T)\|_{2,\rm av}}
{\|u_\star(T)\|_{2,\rm av}},
\qquad
E_\Delta(T)=
\frac{\|\Delta_hu_h(T)-\Delta u_\star(T)\|_{2,\rm av}}
{\|\Delta u_\star(T)\|_{2,\rm av}},
\end{equation}
and the right-endpoint space--time error
\begin{equation}
\label{num:eq:exact-h2-error}
E_{H^2,t}^2=
\frac{\sum_{j=1}^{J}\Delta t\left\langle
|e^j|^2+|\nabla_he^j|^2+|\Delta_he^j|^2
\right\rangle}
{\sum_{j=1}^{J}\Delta t\left\langle
|u_\star^j|^2+|\nabla u_\star^j|^2+|\Delta u_\star^j|^2
\right\rangle},
\qquad e^j=u_h^j-u_\star^j.
\end{equation}
Figure~\ref{fig:num-exact} summarizes the exact fields, convergence, and
term balance. The corresponding temporal values are listed in Table~\ref{tab:num-exact-temporal}.

\begin{figure}[tbp]
\centering
\includegraphics[width=\textwidth]{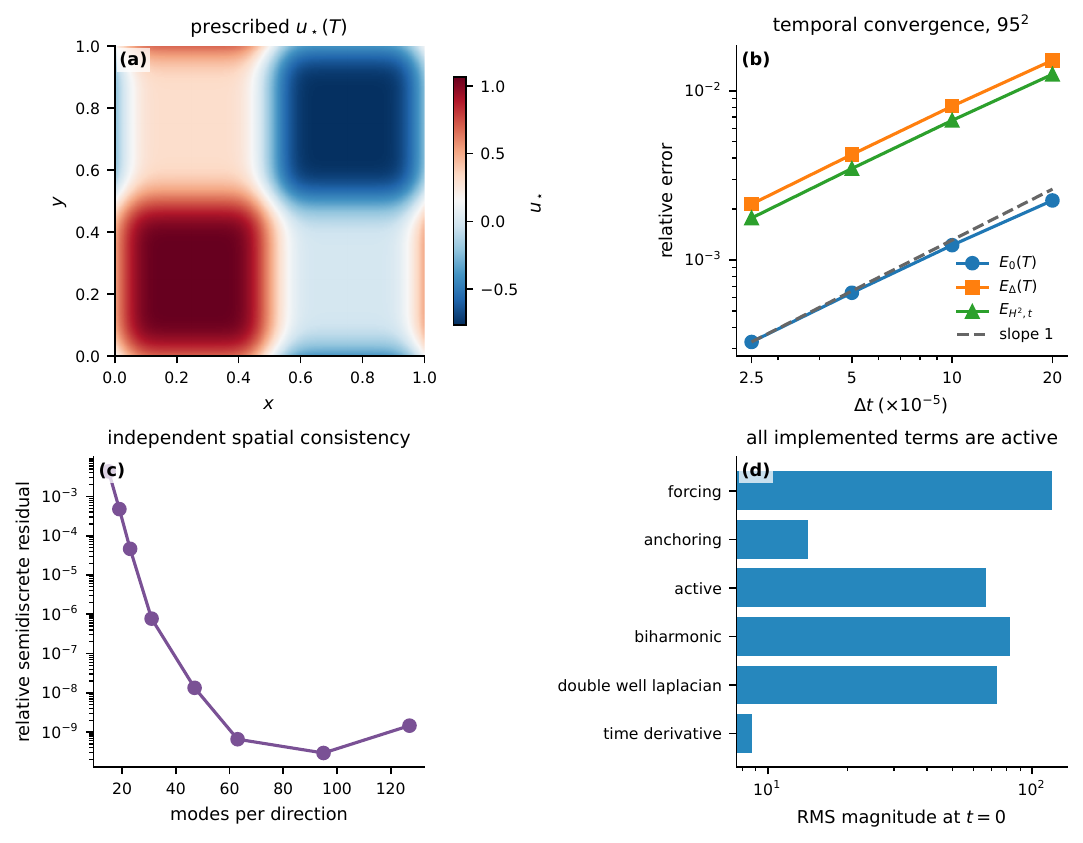}
\caption{Verification against the prescribed exact solution.
(a) $u_\star(T)$.  (b) Temporal convergence of the terminal and
space--time errors on the $95^2$ Fourier space.  (c) Independently formed
semidiscrete residual versus Fourier resolution.  (d) RMS magnitudes of all
terms in \eqref{num:eq:model} at $t=0$.}
\label{fig:num-exact}
\end{figure}

\begin{table}[tbp]
\centering
\small
\begin{tabular}{c|cc|cc|cc}
\hline
$\Delta t$ & $E_0(T)$ & order & $E_\Delta(T)$ & order
& $E_{H^2,t}$ & order\\
\hline
$2.0\times10^{-4}$ & $2.248\times10^{-3}$ & --
& $1.513\times10^{-2}$ & -- & $1.250\times10^{-2}$ & --\\
$1.0\times10^{-4}$ & $1.223\times10^{-3}$ & 0.878
& $8.105\times10^{-3}$ & 0.900 & $6.680\times10^{-3}$ & 0.904\\
$5.0\times10^{-5}$ & $6.401\times10^{-4}$ & 0.934
& $4.199\times10^{-3}$ & 0.949 & $3.468\times10^{-3}$ & 0.946\\
$2.5\times10^{-5}$ & $3.278\times10^{-4}$ & 0.966
& $2.138\times10^{-3}$ & 0.974 & $1.770\times10^{-3}$ & 0.971\\
\hline
\end{tabular}
\caption{Temporal errors for the complete forced active and anchored scheme
at $n=95$, padding factor 3, and $T=10^{-3}$.}
\label{tab:num-exact-temporal}
\end{table}

The three observed orders tend to one, as expected for the split update.  At
the finest step the derivative-sensitive terminal error is
\(2.14\times10^{-3}\), closing the temporal error budget in a norm aligned
with the compactness theorem.  At fixed \(\Delta t=2.5\times10^{-5}\), the
time-discrete errors are already unchanged to the displayed digits between
\(n=31\) and \(n=95\).  Spatial accuracy is therefore assessed independently
by substituting the exact Fourier coefficients into the semidiscrete
right-hand side.  If \(\mathcal F_h\) denotes that padded semidiscrete
right-hand side, we report
\begin{equation}
\label{num:eq:exact-semidiscrete-residual}
R_{\rm sd}(n)=
\frac{\left\|
\widehat{\partial_tu_\star}-\widehat{\mathcal F_h(u_\star)}
\right\|_{\ell^2(\mathcal K_K)}}
{\left\|\widehat{\partial_tu_\star}\right\|_{\ell^2(\mathcal K_K)}}.
\end{equation}
This relative residual decreases from
$7.75\times10^{-7}$ at $n=31$ to $1.34\times10^{-8}$ at $n=47$ and $6.59\times10^{-10}$ at $n=63$, after which differentiation and cancellation reach a numerical floor.

At $n=63$, changing the padding factor from 2 to 3 changes the final state by only $8.98\times10^{-11}$ relatively.  Relaxing the Newton tolerance from $10^{-10}$ to $10^{-8}$ changes it by $1.17\times10^{-9}$. The strict runs have source-corrected one-step mass defects below $1.1\times10^{-14}$. The maximum positive-time
Laplacian-tail fraction falls from $2.20\times10^{-8}$ at $n=31$ to $7.24\times10^{-16}$ at $n=95$. 
%These tests jointly verify the forcing, double-well, biharmonic, active, anchoring, projection, and constant-mode parts of the implementation against an independently differentiated exact state.

\subsection{Finite classifiers versus the direct sign graph}
\label{subsec:num-signgraph-experiment}

The theorem-specific experiment starts from
\begin{equation}
\label{num:eq:graph-initial-data}
\begin{aligned}
q(x,y)&=\cos(2\pi x)+0.70\cos(2\pi y)
+0.35\cos(2\pi(x+y))\\
&\quad+0.20\sin(2\pi(2x-y)),\\
u_{\rm in}&=u_{\rm ref}=\frac{0.20}{2.25}q,
\qquad \lambda=0.05.
\end{aligned}
\end{equation}
We use the coupled levels L0--L2 in Table~\ref{tab:num-parameters} and the
same source split for the smooth and graph predictors.  For each finite
\(N\), define the relative state distances
\begin{align}
E_N^{H^2}
&=
\left[
\frac{\sum_{j=1}^{J}\Delta t\left(
\|u_N^j-u_{\rm Sign}^j\|_{2,\rm av}^2
+\|\Delta_h(u_N^j-u_{\rm Sign}^j)\|_{2,\rm av}^2
\right)}
{\sum_{j=1}^{J}\Delta t\left(
\|u_{\rm Sign}^j\|_{2,\rm av}^2
+\|\Delta_hu_{\rm Sign}^j\|_{2,\rm av}^2
\right)}
\right]^{1/2},
\label{num:eq:graph-state-h2}\\
E_N^{\infty,0}
&=
\frac{\max_j\|u_N^j-u_{\rm Sign}^j\|_{2,\rm av}}
{\max_j\|u_{\rm Sign}^j\|_{2,\rm av}}.
\label{num:eq:graph-state-linf}
\end{align}
The first norm is equivalent to the periodic \(L^2(0,T;H^2)\)-norm.  Write
\(G_{N,h}^{j-1}=|\nabla_hu_{N,h}^{j-1}|\) for the lagged factor in the
finite-\(N\) predictor.  Because the multiplier need not converge strongly
on \(\{\Delta u=0\}\), and because the Laplacian annihilates the constant
mode of the active product, we compare only the dynamically identifiable
mean-free Fourier indices \(0<|k|\leq6\), where
\(|k|=(k_1^2+k_2^2)^{1/2}\).  The cutoff 6 was fixed before the final
comparison rerun. Cutoffs 4 and 8 are retained as sensitivity controls:
\begin{equation}
\label{num:eq:graph-product-error}
E_N^\chi=
\frac{\left\|\Pi_{0<|k|\leq6}\!\left[
a_N(\Delta_h\widetilde u_{N,h}^{\,j})G_{N,h}^{j-1}-\chi_h^j
\right]\right\|_{L^2_tL^2_x}}
{\|\Pi_{0<|k|\leq6}\chi_h\|_{L^2_tL^2_x}}.
\end{equation}

\begin{figure}[tbp]
\centering
\includegraphics[width=\textwidth]{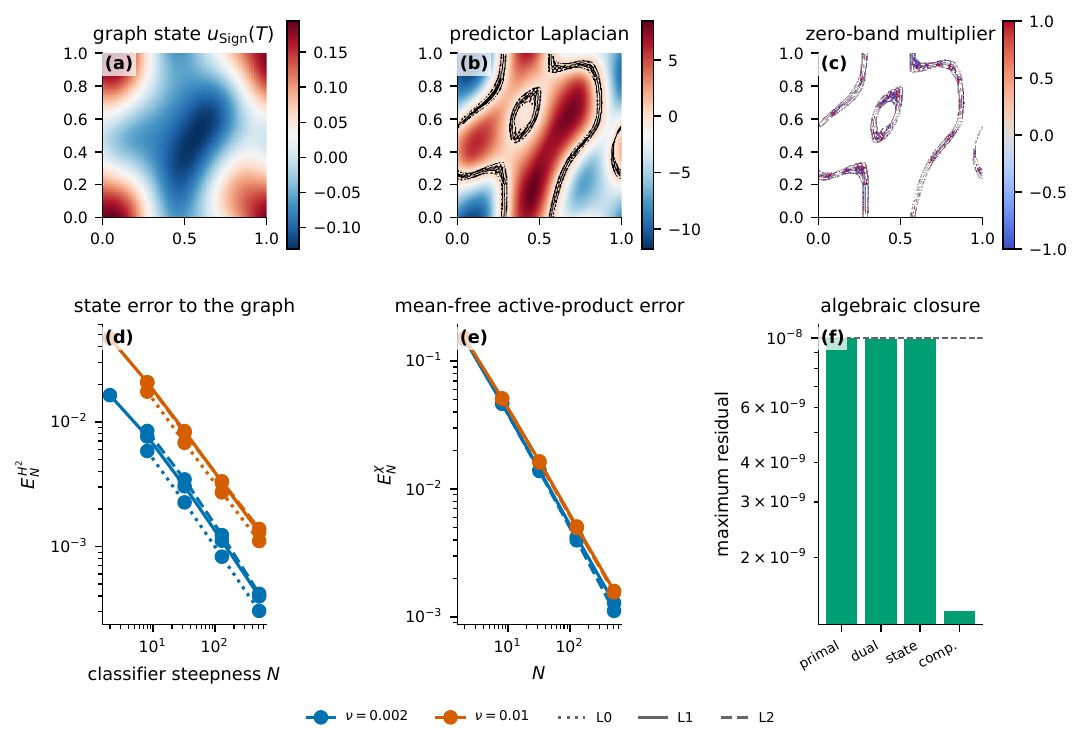}
\caption{Direct discrete sign-graph calculation.
(a) Graph state at \(T\) on the finest L2 level for \(\nu=10^{-2}\).
(b) Its predictor Laplacian and zero contour.
(c) Computed multiplier on
\(|\Delta_h\widetilde u_h|\leq10^{-4}\|\Delta_h\widetilde u_h\|_{2,\rm av}\),
this band contains \(7.49\%\) of the padded nodes, \(43.0\%\) of which have
\(|\xi_h|<0.99\).  Interior values correspond to zero split variables up to
the reported primal residual.
(d) \(E_N^{H^2}\) for two active strengths and three coupled levels.
(e) Mean-free low-mode active-product discrepancy \(E_N^\chi\).
(f) Maximum primal, dual, state, and complementarity residuals for the
padding-3 central graph trajectory.}
\label{fig:num-direct-graph}
\end{figure}

Figure~\ref{fig:num-direct-graph} displays the directly computed graph state,
one multiplier representative, and both finite-\(N\) comparison metrics.
At the central L1 discretisation, increasing \(N\) from 2 to 512 decreases
\(E_N^{H^2}\) from \(1.634\times10^{-2}\) to \(3.985\times10^{-4}\) for
\(\nu=2\times10^{-3}\), and from \(4.681\times10^{-2}\) to
\(1.304\times10^{-3}\) for \(\nu=10^{-2}\).  The corresponding
low-mode product errors fall from \(0.1473\) to \(1.287\times10^{-3}\) and
from \(0.1511\) to \(1.596\times10^{-3}\).  On the finest L2 level the
complete common-\(N\) data are
\[
\begin{array}{c|rrrr}
 &N=8&N=32&N=128&N=512\\ \hline
E_N^{H^2},\ \nu=0.002
 &8.436{\times}10^{-3}&3.448{\times}10^{-3}
 &1.233{\times}10^{-3}&4.170{\times}10^{-4}\\
E_N^{H^2},\ \nu=0.01
 &2.073{\times}10^{-2}&8.177{\times}10^{-3}
 &3.331{\times}10^{-3}&1.381{\times}10^{-3}\\
E_N^\chi,\ \nu=0.002
 &4.641{\times}10^{-2}&1.381{\times}10^{-2}
 &3.971{\times}10^{-3}&1.115{\times}10^{-3}\\
E_N^\chi,\ \nu=0.01
 &5.075{\times}10^{-2}&1.608{\times}10^{-2}
 &4.967{\times}10^{-3}&1.554{\times}10^{-3}
\end{array}
\]
Every row decreases monotonically.  Least-squares fits on these four points
give \(N^{-0.72}\) and \(N^{-0.65}\) for the state metric, and \(N^{-0.90}\)
and \(N^{-0.84}\) for the low-mode product metric, at the two respective
active strengths.  These comparisons use a direct solution of the discrete
graph inclusion.
The low-mode conclusion is insensitive to the auxiliary cutoff: for
cutoffs 4, 6, and 8 the fitted L2 product rates are, respectively, N$^{-0.912}$, \(N^{-0.897}\), and \(N^{-0.880}\) at \(\nu=0.002\), and \(N^{-0.859}\), \(N^{-0.839}\), and \(N^{-0.825}\) at \(\nu=0.01\).

The raw graph-to-graph $H^2$-equivalent discrepancies decrease from L0--L1 to L1--L2 as
\[
1.275\times10^{-2}\longrightarrow8.067\times10^{-3}
\quad(\nu=0.002),\qquad
2.039\times10^{-2}\longrightarrow1.309\times10^{-2}
\quad(\nu=0.01).
\]
The corresponding terminal \(L^2\) discrepancies decrease from
$2.855\times10^{-5}$ to $1.523\times10^{-5}$, and from
$7.289\times10^{-5}$ to $4.041\times10^{-5}$. Terminal low-mode product discrepancies decrease from $1.451\times10^{-2}$ to $6.452\times10^{-3}$, and from $1.076\times10^{-2}$ to $5.351\times10^{-3}$. Thus every coupled diagnostic decreases under refinement, but the raw graph state is not yet at a continuum numerical plateau.

\begin{table}[tbp]
\centering
\small
\begin{tabular}{c|ccc}
\hline
one-factor control
& relative space--time \(H^2\)
& terminal \(L^2\)
& terminal low-mode product\\
\hline
time step & \(9.189\times10^{-3}\) & \(4.165\times10^{-5}\)
& \(4.049\times10^{-3}\)\\
space & \(1.067\times10^{-2}\) & \(4.371\times10^{-6}\)
& \(3.307\times10^{-3}\)\\
padding & \(2.986\times10^{-3}\) & \(1.952\times10^{-6}\)
& \(1.494\times10^{-3}\)\\
algebraic & \(1.739\times10^{-6}\) & \(7.046\times10^{-10}\)
& \(1.411\times10^{-6}\)\\
\hline
\end{tabular}
\caption{One-factor error budget for the direct graph trajectory at
\(\nu=10^{-2}\).  The algebraic row compares identical padding-3
configurations at ADMM tolerances \(10^{-8}\) and \(3\times10^{-9}\).  The
product column uses the mean-free projection in
\eqref{num:eq:graph-product-error}.}
\label{tab:num-graph-budget}
\end{table}

Table~\ref{tab:num-graph-budget} gives the one-factor controls used to
separate constitutive and discretisation effects.  For \(\nu=10^{-2}\),
the raw state signal is clearly larger than every
one-factor \(H^2\) shift at \(N=8\), the low-mode product signal is resolved
through \(N=32\).  At larger \(N\) the constitutive discrepancy falls below
one or more discretisation controls.  Nevertheless, the paired
finite-\(N\)-to-graph curves themselves are stable from L1 to L2: at
\(N=512\), \(E_N^{H^2}\) changes by \(4.45\%\) and \(5.59\%\) for the two
active strengths, while \(E_N^\chi\) changes by \(15.4\%\) and \(2.76\%\),
respectively, percentages are relative to the L2 values.  We therefore interpret the result as resolved direct-discrete graph evidence through moderate \(N\), with a refinement-consistent convergence trend at larger \(N\), not as continuum closure of the graph trajectory or
strong multiplier convergence.

For the padding-3 central solve, the maxima over all time steps are
\[
R_{\rm pri}=1.000\times10^{-8},\quad
R_{\rm dual}=9.961\times10^{-9},\quad
R_{\rm state}=9.961\times10^{-9},\quad
R_{\rm comp}=1.347\times10^{-9},
\]
while
\[
R_{\rm graph}=1.000\times10^{-8},\qquad
\mathcal G_h=1.347\times10^{-9},\qquad
\max_jR_{\rm box}^j=3.61\times10^{-14},
\]
and the terminal box residual is \(R_{\rm box}(T)=3.56\times10^{-14}\).
The largest ADMM count is 5691 of the 15000 permitted iterations.
The strict algebraic rerun is three to five orders of magnitude below the
discretisation controls.
The plot of \(\xi_h\) near the small-Laplacian region illustrates an
admissible interior multiplier selection that is invisible in a pointwise
algebraic estimate
such as \(0\leq |r|-a_N(r)r\leq2/(\pi N)\).  The state and active-product
errors, rather than that bound alone, supply the numerical evidence relevant
to the sign-graph theorem.

\subsection{Spinodal coarsening with a discretisation-adjusted paired design}
\label{subsec:num-spinodal-budget}

For seed \(s\), let \(u_{\rm in}^{(s)}=u_{\rm ref}^{(s)}\) be a mean-zero
Gaussian Fourier field filtered by
\begin{equation}
\label{num:eq:spinodal-filter}
\widehat u_k\longmapsto
\exp\!\left[-\left(\frac{|k|^2}{7^2}\right)^2\right]\widehat u_k
\end{equation}
and rescaled to standard deviation $0.1$.  A master $47^2$ realization is used for every configuration, the $63^2$ initial state is obtained by exact Fourier embedding.  Thus refinement changes the discretisation, not the
random field. The inactive and active members of every pair have identical initial and reference data. Production uses 20 predetermined seeds, while the first six seeds form the time, space, padding, and tolerance audit subset
and the first four form the joint fine check.

For each observable $O\in\{\ell_c,I,P\}$, write
$d_s^C=O_{\rm active}^{C,s}(T)-O_{\rm inactive}^{C,s}(T)$, where P denotes
the production configuration and
$C\in\{\mathrm T,\mathrm H,\mathrm A,\mathrm R,\mathrm F\}$ an audit configuration. We bootstrap the paired production mean with 50000 draws. We use the ordinary paired bootstrap and the $2.5\%$ and $97.5\%$
percentiles with NumPy's linear quantile rule. For metric index $m=0,1,2$, the endpoint production seed is $20263000+m$, the check seed
is $20264000+113m+\sum_{\mathtt c\in C}{\rm ord}(\mathtt c)$, and the time-series seeds are fixed analogously in the accompanying driver.
To keep seed variability separate from discretisation sensitivity, set $\delta_s^C=d_s^C-d_s^{\rm P}$, and let
$q_\alpha^*(\overline\delta^{,C})$ denote the $\alpha$-quantile of the paired bootstrap distribution of its sample mean. Define the numerical envelope
\begin{equation}
\label{num:eq:spinodal-envelope}
\mathcal E_O=
\max_C\max\left\{
\left|q_{0.025}^*(\overline\delta^{,C})\right|,
\left|q_{0.975}^*(\overline\delta^{,C})\right|
\right\},
\end{equation}
using only seeds shared by P and \(C\).  If
\([L_O,U_O]\) is the paired seed interval, we report the predeclared
sensitivity interval
\([L_O-\mathcal E_O,U_O+\mathcal E_O]\).  This enlargement combines a sampling interval with a numerical sensitivity envelope (not presented as a formal simultaneous 95\% confidence interval).

\begin{figure}[tbp]
\centering
\includegraphics[width=\textwidth]{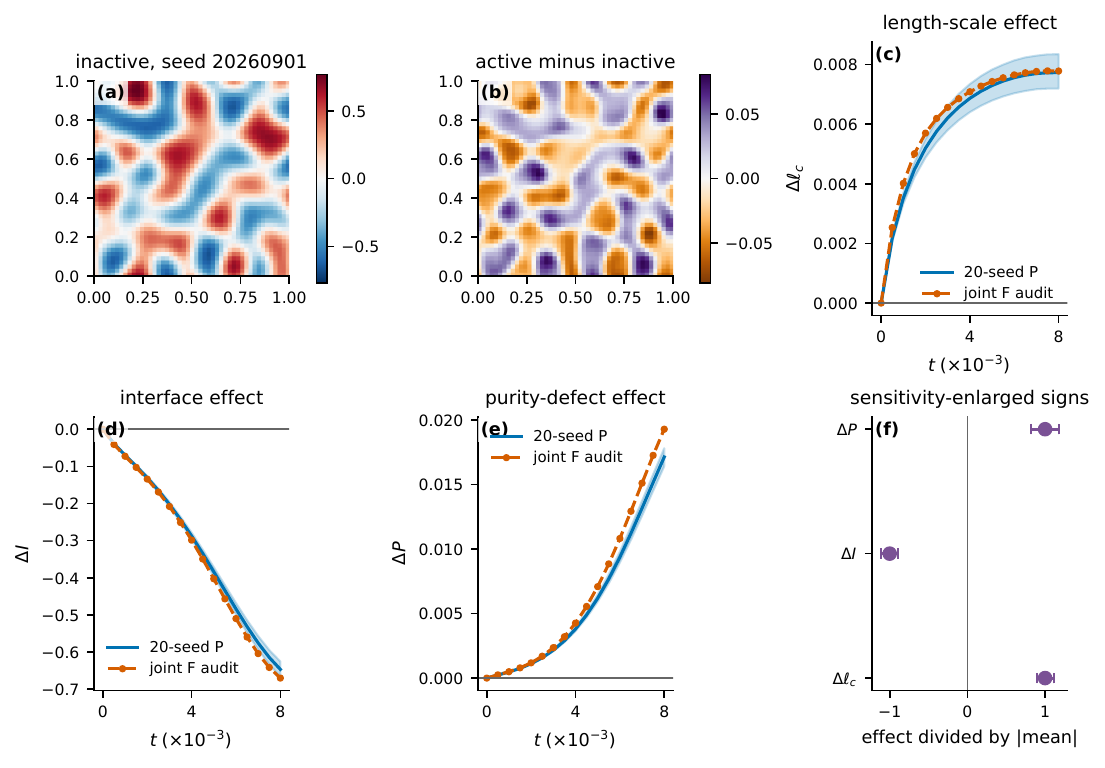}
\caption{Paired spinodal experiment with a numerical error budget.
(a) Terminal inactive field for seed 20260901.
(b) Terminal active-minus-inactive difference for the same seed, shown on its own symmetric colour scale.
(c--e) Mean paired active-minus-inactive effects for characteristic length, interface proxy, and purity defect. Bands are pointwise paired 95\% bootstrap intervals and the joint fine audit is shown separately.
(f) Terminal sensitivity intervals from
\eqref{num:eq:spinodal-envelope}, divided by the magnitude of each
production mean so that all three sign margins are visible.}
\label{fig:num-spinodal-budget}
\end{figure}

At \(T=8\times10^{-3}\), the production paired effects are
\begin{equation}
\label{num:eq:spinodal-results}
\begin{aligned}
\Delta\ell_c&=7.728\times10^{-3}
\quad[7.194\times10^{-3},\,8.354\times10^{-3}],\\
\Delta I&=-0.64665
\quad[-0.66650,\,-0.62533],\\
\Delta P&=1.7136\times10^{-2}
\quad[1.6402\times10^{-2},\,1.7816\times10^{-2}],
\end{aligned}
\end{equation}
where brackets denote the paired seed intervals.  The absolute numerical
envelopes are \(2.448\times10^{-4}\), \(5.125\times10^{-2}\), and
\(2.307\times10^{-3}\), respectively, or \(3.17\%\), \(7.92\%\), and
\(13.47\%\) of the production effects.  The resulting sensitivity intervals
are
\[
[0.006949,\,0.008599],\qquad
[-0.717749,\,-0.574083],\qquad
[0.014094,\,0.020124].
\]
All 20 length differences are positive, all 20 interface differences are
negative, and all 20 purity-defect differences are positive.

Time-step halving is the only material numerical shift: it changes the three
effects by \(9.79\times10^{-5}\), \(-4.72\times10^{-2}\), and
\(2.16\times10^{-3}\), respectively.  Spatial refinement changes them by at
most \(0.09\%\), threefold padding by at most \(0.042\%\), and the tighter
nonlinear tolerance by less than \(10^{-6}\) relatively. The joint fine configuration reproduces the temporal shift. Note, it is not  revealing a hidden space, padding, or solver interaction.

Across the recorded production and audit diagnostics, the maximum
finite-\(N\) residual is
\(9.95\times10^{-7}\), the cumulative source--mass defect is
\(8.25\times10^{-14}\), the anchoring projection defect is
\(2.63\times10^{-13}\), and the maximum recorded positive-time ordinary and
Laplacian-weighted modal tails are \(8.63\times10^{-7}\) and
\(5.16\times10^{-4}\).  The minimum classifier-sign density is
\(1.40\times10^{-11}>0\).  As a descriptive post-step proxy, between
\(98.38\%\) and \(99.44\%\) of active terminal nodes satisfy
\(|N\Delta_hu_h|\geq10\). We note that this is not substituted for the implicit predictor argument in any error estimate.

Figure~\ref{fig:num-spinodal-budget} displays the paired histories and the separate sampling and numerical-sensitivity information. The result is therefore a resolved coarsening effect: the active term increases the characteristic length and decreases interfacial content, while the purity defect increases. Thus this regime produces less interface but does not move the field closer to the pure values $\pm1$. This conclusion uses paired random data and a coupled numerical envelope. Note that a bootstrap interval alone would quantify seed variability but not discretisation bias.

\subsection{Multi-direction continuous-dependence stress test}
\label{subsec:num-stability-multidirection}

The analytical estimate measures perturbations in both the initial state and
the anchored product \(b(u_{\rm ref})\).  We therefore perturb these two data
components directly.  Let
\[
\phi_{k}^{\sin}=\sqrt2\sin(2\pi k\cdot x),
\qquad
\phi_{k}^{\cos}=\sqrt2\cos(2\pi k\cdot x),
\]
and define the unit-\(L^2\) fields
\begin{align}
\psi_1&=\frac{\phi_{(1,2)}^{\sin}+0.4\phi_{(3,-1)}^{\cos}}
{\sqrt{1+0.4^2}},
&
\psi_2&=\frac{\phi_{(2,1)}^{\cos}-0.35\phi_{(1,3)}^{\sin}}
{\sqrt{1+0.35^2}},
\nonumber\\
\psi_3&=\frac{\phi_{(4,3)}^{\sin}+0.3\phi_{(5,-2)}^{\cos}}
{\sqrt{1+0.3^2}}.
\label{num:eq:stability-directions}
\end{align}
The four normalized product-data directions are
\begin{equation}
\label{num:eq:four-data-directions}
(\psi_1,0),\qquad
(0,\psi_1),\qquad
2^{-1/2}(\psi_1,\psi_2),\qquad
2^{-1/2}(\psi_3,-\psi_2).
\end{equation}
For a direction \((\psi_{\rm in},\psi_b)\), we use
\[
u_{\rm in}^{\varepsilon}=u_{\rm in}+\varepsilon\psi_{\rm in},
\qquad
b(u_{\rm ref}^{\varepsilon})=b(u_{\rm ref})+\varepsilon\psi_b,
\]
where the real seventh root defines \(u_{\rm ref}^{\varepsilon}\).  Hence the
combined data distance is \(\varepsilon\) to roundoff, including the pure
reference direction.  The common baseline is
\begin{align*}
u_{\rm in}&=\frac{0.20}{2.25}\left[
\cos(2\pi x)+0.70\cos(2\pi y)+0.35\cos(2\pi(x+y))
+0.20\sin(2\pi(2x-y))\right],\\
u_{\rm ref}&=0.82+0.04\left[
\cos(2\pi x)+0.6\sin(2\pi y)+0.2\cos(2\pi(x+y))\right],
\qquad \lambda=0.2.
\end{align*}
In this stress test only, \(S=5\) is held fixed in both members of every
pair, so the response is not contaminated by a data-dependent change in
stabilization.  A targeted audit of the extreme tested parameters gives
\(\max\|u_h\|_{\infty,Q}<0.198\); hence the adaptive floor in
\eqref{num:eq:lagged-data} equals \(4<5\) throughout those controls.

To remove the tautological supremum at \(t=0\) for initial-data
perturbations, we report both the theorem-aligned quantity and a
positive-time stress-test quantity.  With
\(w^\varepsilon=u^\varepsilon-u\), set
\begin{align}
H_{\rm th}(\varepsilon)^2
&=\sup_{0\leq t\leq T}\|w^\varepsilon(t)\|_{2,\rm av}^2
+\int_0^T\|\Delta_hw^\varepsilon(t)\|_{2,\rm av}^2\,dt,
\label{num:eq:stability-theorem-metric}\\
H_{\rm dyn}(\varepsilon)^2
&=\sup_{T/4\leq t\leq T}\|w^\varepsilon(t)\|_{2,\rm av}^2
+\int_0^T\|\Delta_hw^\varepsilon(t)\|_{2,\rm av}^2\,dt,
\qquad
A_{\rm dyn}=\frac{H_{\rm dyn}}{\varepsilon}.
\label{num:eq:stability-dynamic-metric}
\end{align}
We also verify pointwise at every stored time that
\begin{equation}
\label{num:eq:stability-monotonicity}
\bigl[a_N(\Delta_hu^\varepsilon)-a_N(\Delta_hu)\bigr]
\bigl[\Delta_hu^\varepsilon-\Delta_hu\bigr]\geq0
\end{equation}
up to roundoff.

\begin{figure}[tbp]
\centering
\includegraphics[width=\textwidth]{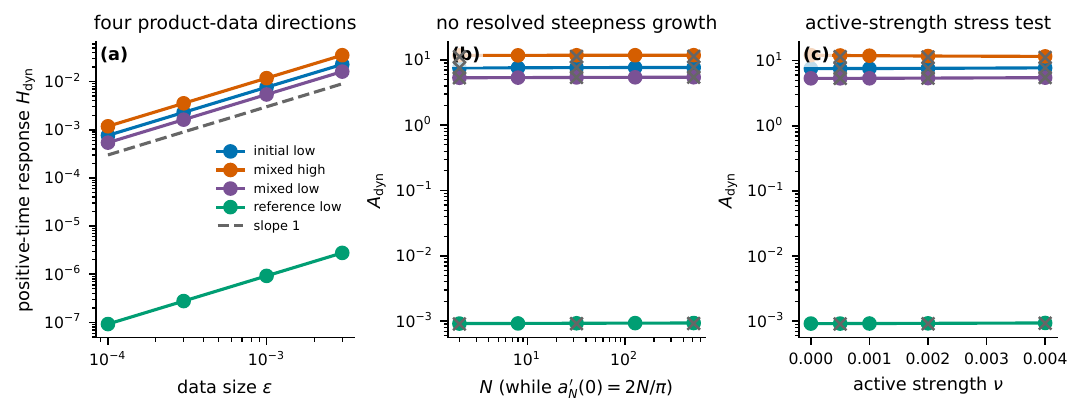}
\caption{Multi-direction continuous-dependence experiment.
(a) \(H_{\rm dyn}\) versus \(\varepsilon\) for four directions at
\((N,\nu)=(32,2\times10^{-3})\).  (b) Amplification \(A_{\rm dyn}\) versus
\(N\), despite \(a_N'(0)=2N/\pi\).  (c) \(A_{\rm dyn}\) versus \(\nu\).
Crosses in (b,c) are the refined configurations in
Table~\ref{tab:num-parameters}.}
\label{fig:num-stability-multidirection}
\end{figure}

For every direction, the fitted log--log slopes of
\(H_{\rm dyn}\) against \(\varepsilon\) are
\(0.9978\), \(1.0000\), \(0.9985\), and \(0.9989\) for the initial-only,
reference-only, low-mode mixed, and high-mode mixed directions,
respectively.  In the same order, the production ranges of \(A_{\rm dyn}\)
over \(N=2,\ldots,512\) are
\[
7.540\text{--}7.663,\qquad
9.156\times10^{-4}\text{--}9.334\times10^{-4},\qquad
5.332\text{--}5.430,\qquad
11.733\text{--}11.815.
\]
Their spans are \(1.61\%\), \(1.93\%\), \(1.81\%\), and \(0.70\%\) of
the respective directional means even though \(a_N'(0)\) increases by a
factor \(256\).  On the refined subset \(N\in\{2,32,512\}\), the
corresponding ranges are \(7.540\)--\(7.684\),
\(9.002\times10^{-4}\)--\(9.197\times10^{-4}\), \(5.332\)--\(5.441\),
and \(11.229\)--\(11.350\).

Over the production $\nu$-sweep, the four ranges are
\(7.567\)--\(7.753\),
\(9.182\times10^{-4}\)--\(9.383\times10^{-4}\),
\(5.352\)--\(5.483\), and \(11.630\)--\(12.131\). Their relative spans are
\(2.43\%\), \(2.18\%\), \(2.44\%\), and \(4.21\%\).  The refined values
at \(\nu=5\times10^{-4},2\times10^{-3},4\times10^{-3}\) preserve the same
directional trends.  The largest relative production-to-refined shift is
\(4.29\%\) for the high-mode mixed direction. Its largest absolute shift is
\(5.07\times10^{-1}\).  At the central pair
\((N,\nu)=(32,2\times10^{-3})\), the relative shifts are \(0.214\%\),
\(1.44\%\), \(0.200\%\), and \(4.02\%\), respectively.  We therefore
attach a conservative \(4.3\%\) coupled-discretisation envelope to the
high-mode curve; the worst-case envelopes for the other three directions
are \(0.28\%\), \(1.69\%\), and \(0.29\%\).

The minimum value in \eqref{num:eq:stability-monotonicity} is zero. The largest terminal Laplacian-weighted difference tail is $1.85\times10^{-2}$ (the largest terminal unweighted tail is $2.06\times10^{-4}$), the maximum nonlinear and projection defects divided by $\varepsilon$ are $9.67\times10^{-7}$ and $7.71\times10^{-12}$.
The computed product-data distance differs from $\varepsilon$ by at most $3.33\times10^{-16}$ relatively.

Figure~\ref{fig:num-stability-multidirection} shows these four directional responses and their refined controls.  The coupled refinements thus retain uniformly bounded amplification across the tested $N$-range. We note that this is potentially more informative than a single perturbation direction: it tests the theorem's monotonicity mechanism in a regime where
\(a_N'(0)=2N/\pi\) itself is not uniformly bounded. 

\subsection{Audited missing-strip active-response stress test}
\label{subsec:num-reconstruction-audit}

We finally consider reconstruction as a controlled test of whether the active response has a visually and numerically resolved effect (not as evidence for the graph limit).  The hidden target $u^\dagger$ is a smooth three-bar field with bar centres $0.19,0.50,0.81$, half-width $0.055$, and interface parameter $0.04$.
Let \(M(y)\) be a smooth horizontal-strip mask of half-width \(0.105\) and
transition parameter \(0.03\).  The corrupted observation is
\[
u_{\rm obs}=(1-M)u^\dagger,
\qquad
u_{\rm in}=u_{\rm ref}=u_{\rm obs},
\qquad
\lambda=6(1-M)+10^{-3}M.
\]
Thus \(u^\dagger\) is never supplied to the evolution, it is loaded only after a run to evaluate error.  We compare a passive solve, $\nu=0$, with
the fixed active stress value \((\nu,N)=(10^{-2},32)\).  This value is five
times the perturbative strength used in the spinodal study and was fixed
before the refined audit, not selected by minimizing truth error.

On the common \(381^2\) evaluation grid we use
\[
E_M(u)=
\frac{\langle M|u-u^\dagger|^2\rangle^{1/2}}
{\langle M|u^\dagger|^2\rangle^{1/2}},
\qquad P(u)=\langle|u^2-1|\rangle,
\qquad I(u)=\langle|\nabla_hu|\rangle.
\]
A predeclared coarse response scan at
\(\nu=0,0.002,0.005,0.01,0.02\) is monotone in all three diagnostics:
from the passive value to \(\nu=0.02\), \(E_M\) increases from
\(0.7911\) to \(0.8162\), \(P\) increases from \(0.8095\) to \(0.8428\),
and \(I\) decreases from \(5.415\) to \(4.531\).  

At the production configuration, the passive-to-active changes are
\begin{equation}
\label{num:eq:reconstruction-effects}
\Delta E_M=1.2856\times10^{-2},
\qquad
\Delta P=1.7408\times10^{-2},
\qquad
\Delta I=-4.5379\times10^{-1}.
\end{equation}
The full active-minus-passive field satisfies
\[
\|u_{\rm a}-u_{\rm p}\|_{2,\rm av}=3.0937\times10^{-2},
\qquad
\|u_{\rm a}-u_{\rm p}\|_\infty=5.4737\times10^{-2}.
\]
Consequently, the common-scale final states remain close, while their
difference is plainly resolved once plotted on its own symmetric scale.

\begin{figure}[!t]
\centering
\includegraphics[width=\textwidth]{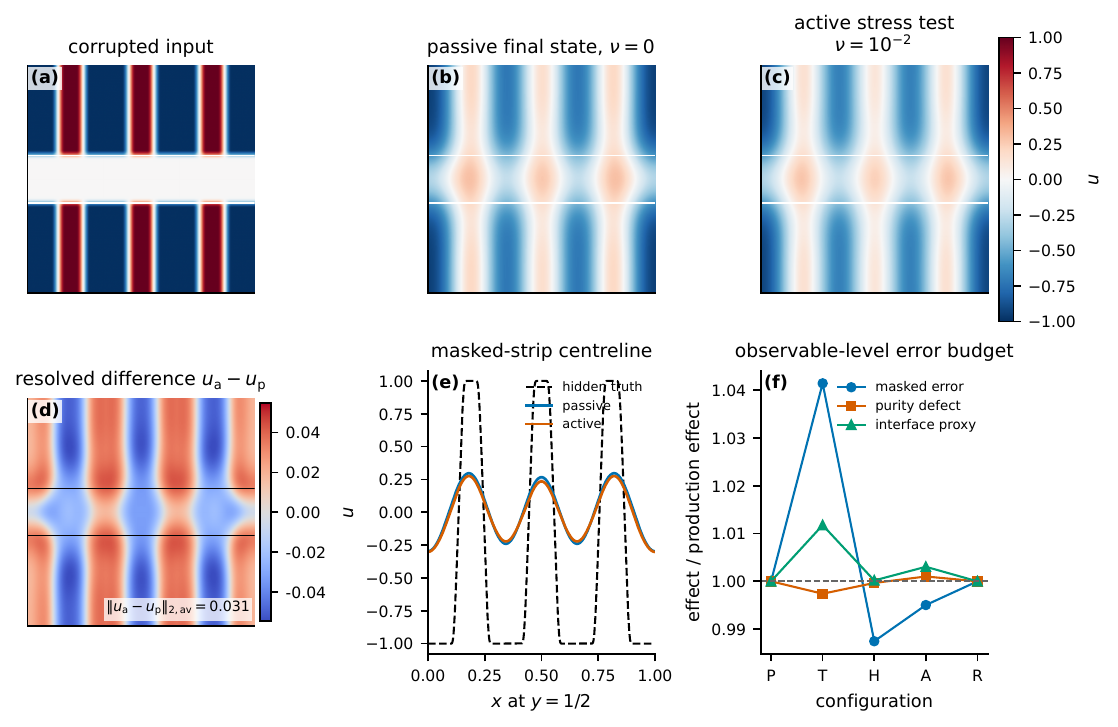}
\caption{Audited missing-strip reconstruction.
(a) Corrupted input, the white band contains no hidden target data.
(b,c) Passive and active final states on the same colour scale, white curves
mark the half-mask contour.  (d) Active-minus-passive field on its own
symmetric scale.  (e) Centreline through the masked strip, the dashed hidden
truth is shown only for post-solve evaluation.  (f) Each paired observable
effect divided by its production value for the production run (P), time-step
halving (T), spatial refinement (H), threefold padding (A), and a
hundredfold tighter nonlinear tolerance (R).}
\label{fig:num-reconstruction-audit}
\end{figure}

For \(\Delta E_M,\Delta P,\Delta I\), the largest one-factor shifts from the
production effects are, respectively,
\[
5.323\times10^{-4},\qquad
4.513\times10^{-5},\qquad
5.333\times10^{-3}.
\]
The corresponding signal-to-shift ratios are \(24.2\), \(386\), and
\(85.1\), and every control preserves all three signs.  The production field
signal is \(26.8\) times its largest one-factor \(L^2\) shift
\((1.154\times10^{-3})\).  Hence both the visual difference and the adverse
observable changes lie above the measured discretisation and solver
uncertainty.

Across the experiment, the maximum recorded active residual is \(9.91\times10^{-8}\), the source--mass residual is \(2.68\times10^{-16}\), and the anchoring projection defect is \(1.22\times10^{-5}\). The ordinary modal tail remains below \(2.49\times10^{-5}\). The maximum Laplacian-weighted tail is \(1.06\times10^{-2}\) on the time-refined control and falls below \(9.9\times10^{-4}\) on the spatially refined active run. The paired effects are nevertheless stable under that refinement. No clipping is applied.

Figure~\ref{fig:num-reconstruction-audit} shows the common-scale states, the
resolved difference, and the observable controls.  This is therefore a
resolved negative reconstruction result: in this parameter regime, activity
reduces interfacial content
while increasing masked error and phase-purity defect.  Its value is that the
same reduced-interface/coarsening response seen in the spinodal ensemble is
now visible in a deterministic geometry with a closed observable-level error
budget and is consistent with the local damping calculation below.

\subsection{Interpretation and limitations}
\label{subsec:num-interpretation}

The five numerical studies serve four distinct  roles. The prescribed exact solution verifies the complete smooth-classifier implementation, including its temporal order and derivative-sensitive error.  The weighted ADMM calculation solves the discrete graph inclusion itself and permits state and active-product comparison with finite $N$, coupled refinement is then used to distinguish a constitutive signal from numerical uncertainty.
The spinodal and stability studies test observable consequences and continuous dependence with error budgets tailored to those questions.  The reconstruction stress test separately resolves a deterministic visual effect and shows that it is adverse in the stated error and purity metrics.
We note that none of these computations proves convergence of the discrete graph problem to the continuum inclusion.

The active contribution should not be described as a universal sharpening or anti-diffusive mechanism.  If \(G=|\nabla u|\geq0\) is frozen locally,
treated as spatially constant, and \(|\Delta u|\) is small, then
\begin{equation}
\label{num:eq:frozen-damping}
a_N(\Delta u)\simeq\frac{2N}{\pi}\Delta u,
\qquad
\Delta\!\left[-\nu a_N(\Delta u)G\right]
\simeq-\frac{2\nu NG}{\pi}\Delta^2u.
\end{equation}
This has the sign of additional fourth-order damping.  A useful local nondimensional comparison is therefore
\(\Theta=2\nu NG/(\pi\mu)\), but it applies only where
\(N|\Delta_h\widetilde u_h|\ll1\).  At the central graph resolution the observed norm ratios
\(\nu\|\Pi_{k\ne0}\chi_h\|_2/
(\mu\|\Delta_h\widetilde u_h\|_2)\) are \(0.0841\) and
\(0.408\) for \(\nu=2\times10^{-3}\) and \(10^{-2}\), respectively.  This quantifies the moderate and stronger active regimes without substituting a post-step state diagnostic for the actual implicit classifier argument.
The resolved increase of spinodal length and reduction of interface content define a coarsening regime consistent with the local damping calculation.
Sharpening, should it occur for other data or parameters, would require a separately resolved transition-width or maximum-gradient verification.

The multiplier $\xi$ is not expected to be unique on
$\{\Delta u=0\}$, and it is not determined by the state equation where the lagged gradient vanishes. This is why the direct graph experiment emphasizes the unique primal state and the active product \(\chi=G\xi\) (and why no
strong multiplier-convergence claim is made). Finally, all experiments here are kept toroidal and therefore support no numerical conclusion about curvature.

\section*{Data availability}

No external datasets were used in this study. All numerical data reported
in the manuscript were generated from the computational procedures and
parameter settings described in the article.

\section*{Conflict of interest}

The authors declare that they have no financial or non-financial interests
that are directly or indirectly related to the work submitted for publication.

\end{document}